\renewcommand{\l}{\left}
\renewcommand{\r}{\right}
\theoremstyle{definition}
\newtheorem{theorem}{Theorem}[section]
\newtheorem{lemma}[theorem]{Lemma}
\newtheorem{proposition}[theorem]{Proposition}
\newtheorem{claim}[theorem]{Claim}
\newtheorem{problem}[theorem]{Problem}
\begin{document}

\title{\textbf{Random Tur{\'a}n and counting results for general position sets over finite fields}}

\author{Yaobin Chen\thanks{Shanghai Center for Mathematical Sciences, Fudan University, Shanghai, 200438 China. Email: {\tt ybchen21@m.fudan.edu.cn}. Supported by National Natural Science Foundation of China (Grant No. 123B2012).}
\and Xizhi Liu\thanks{Mathematics Institute and DIMAP, University of Warwick, Coventry, CV4 7AL, UK. Email: {\tt xizhi.liu.ac@gmail.com}. Supported by ERC Advanced Grant 101020255 and Leverhulme Research Project Grant RPG-2018-424.}
\and Jiaxi Nie\thanks{Shanghai Center for Mathematical Sciences, Fudan University, Shanghai, 200438 China. Email: {\tt jiaxi\_nie@fudan.edu.cn}.}
\and Ji Zeng\thanks{University of California San Diego, La Jolla, CA, USA and Alfréd Rényi Institute of Mathematics, Budapest, Hungary. Email: {\tt jzeng@ucsd.edu}. Supported by NSF grant DMS-1800746, and ERC Advanced Grants ``GeoScape'', no. 882971 and ``ERMiD'', no. 101054936.}}
\date{}

\maketitle

\begin{abstract}
Let $\alpha(\mathbb{F}_q^d,p)$ denote the maximum size of a general position set in a $p$-random subset of $\mathbb{F}_q^d$. We determine the order of magnitude of $\alpha(\mathbb{F}_q^2,p)$ up to polylogarithmic factors for all possible values of $p$, improving the previous results obtained by Roche-Newton--Warren and Bhowmick--Roche-Newton. For $d \ge 3$ we prove upper bounds for $\alpha(\mathbb{F}_q^d,p)$ that are essentially tight within certain ranges for $p$. 

We establish the upper bound $2^{(1+o(1))q}$ for the number of general position sets in $\mathbb{F}_q^d$, which matches the trivial lower bound $2^{q}$ asymptotically in the exponent. We also refine this counting result by proving an asymptotically tight (in the exponent) upper bound for the number of general position sets with a fixed size. The latter result for $d=2$ improves a result of Roche-Newton--Warren.

Our proofs are grounded in the hypergraph container method, and additionally, for $d=2$ we also leverage the pseudorandomness of the point-line incidence graph of $\mathbb{F}_{q}^2$.
\end{abstract}


\section{Introduction}
Let $d$ be a positive integer and $\mathbb{F}$ be a field. A point set $P$ in $\mathbb{F}^{d}$ is in \textit{general position} if no $d+1$ points of $P$ are contained in a hyperplane, where a \textit{hyperplane} is a $(d-1)$-dimensional affine subspace of $\mathbb{F}^{d}$. There are several central problems closely related to general position sets in extremal combinatorics and discrete geometry. For example, the famous \textit{no-three-in-line problem} raised by Dudeney \cite{Dudeney} in 1917 asks if there exists a general position set in $\mathbb{R}^2$ that contains $2n$ points from the grid $[n]\times [n]$. This question and its variations have been extensively studied for a long time, see \cite{hall1975some,flammenkamp1998progress,por2007no,lefmann2008no,SZ} for some latest results. A closely related problem posted by Erd\H{o}s~\cite{erdos1986some} asks for the maximum size of a general position set contained in an arbitrary point set of size $n$ without collinear quadruples in $\mathbb{R}^2$. In 2018, a breakthrough result by Balogh and Solymosi \cite{BS} using the hypergraph container method shows that $n^{5/6+o(1)}$ is an upper bound for Erd{\H o}s' question.

In the present work, we focus on extremal problems related to general position sets in $\mathbb{F}^d_q$, where $\mathbb{F}_{q}$ is the finite field whose cardinality is the prime power $q$. More specifically, we study the maximum size of a general position set contained in a $p$-random subset of $\mathbb{F}_{q}^{d}$ and the number of general position sets in $\mathbb{F}_{q}^{d}$. Our results are presented in the following two subsections. 

\subsection{Random Tur{\' a}n results}

The random Tur\'an-type problem is a well-studied branch in extremal and probabilistic combinatorics concerning the maximum value of certain parameters in random structures. For example, see~\cite{kohayakawa1997k,conlon2016combinatorial,schacht2016extremal,jiang2022balanced,spiro2022random,Mubayi2023OnTR,nie2023tur,nie2023random} for some results of this flavor. In this subsection, we study random Tur\'an-type problems for general position sets in $\mathbb{F}_{q}^{d}$.

Given a real number $p\in [0,1]$, we use $\alpha(\mathbb{F}_q^d,p)$ to denote the maximum size of a general position set that is contained in a \textit{$p$-random set} $\mathbf{S}_p \subset \mathbb{F}_q^d$. Here, by ``$p$-random'' we mean that every point of $\mathbb{F}_q^d$ is sampled into $\mathbf{S}_p$ independently with probability $p$.

Note that determining $\alpha(\mathbb{F}_q^d,1)$ is equivalent to answering the following fundamental question: How large can a general position set in $\mathbb{F}^d_q$ be? Erd\H{o}s~\cite{roth1951problem} observed that the \textit{moment curve}, which consists of all points $(x, x^2, \dots, x^d) \in \mathbb{F}_{q}^d$, is in general position. Hence we have the lower bound $\alpha(\mathbb{F}_q^d,1) \ge q$. On the other hand, since $\mathbb{F}_q^d$ is a disjoint union of $q$ hyperplanes and a general position set can contain at most $d$ points from each of them, we obtain the upper bound $\alpha(\mathbb{F}_q^d,1) \le dq$. In fact, it follows from a more involved argument that $\alpha(\mathbb{F}_q^d,1)\le(1+o(1))q$, see e.g. our Lemma~\ref{supersaturation_Fqd}. Overall, we have $\alpha(\mathbb{F}_q^d,1) = (1+o(1))q$.

For $d=2$ and $p \neq 1$, Roche-Newton--Warren~\cite{roche2022arcs} and Bhowmick--Roche-Newton~\cite{bhowmick2022counting} established essentially tight bounds for $\alpha(\mathbb{F}_q^2,p)$ when $p$ lies in the intervals $[0,q^{-1}]$ and $[q^{-1/3},1]$ respectively. We improve their results by determining the order of magnitude of $\alpha(\mathbb{F}_q^2,p)$ up to polylogarithmic factors for all possible values of $p$ (see Figure~\ref{fig:randomturan1}).

\begin{figure}[ht]
    \centering
    \includegraphics[scale=0.7, trim=180 150 250 136,clip]{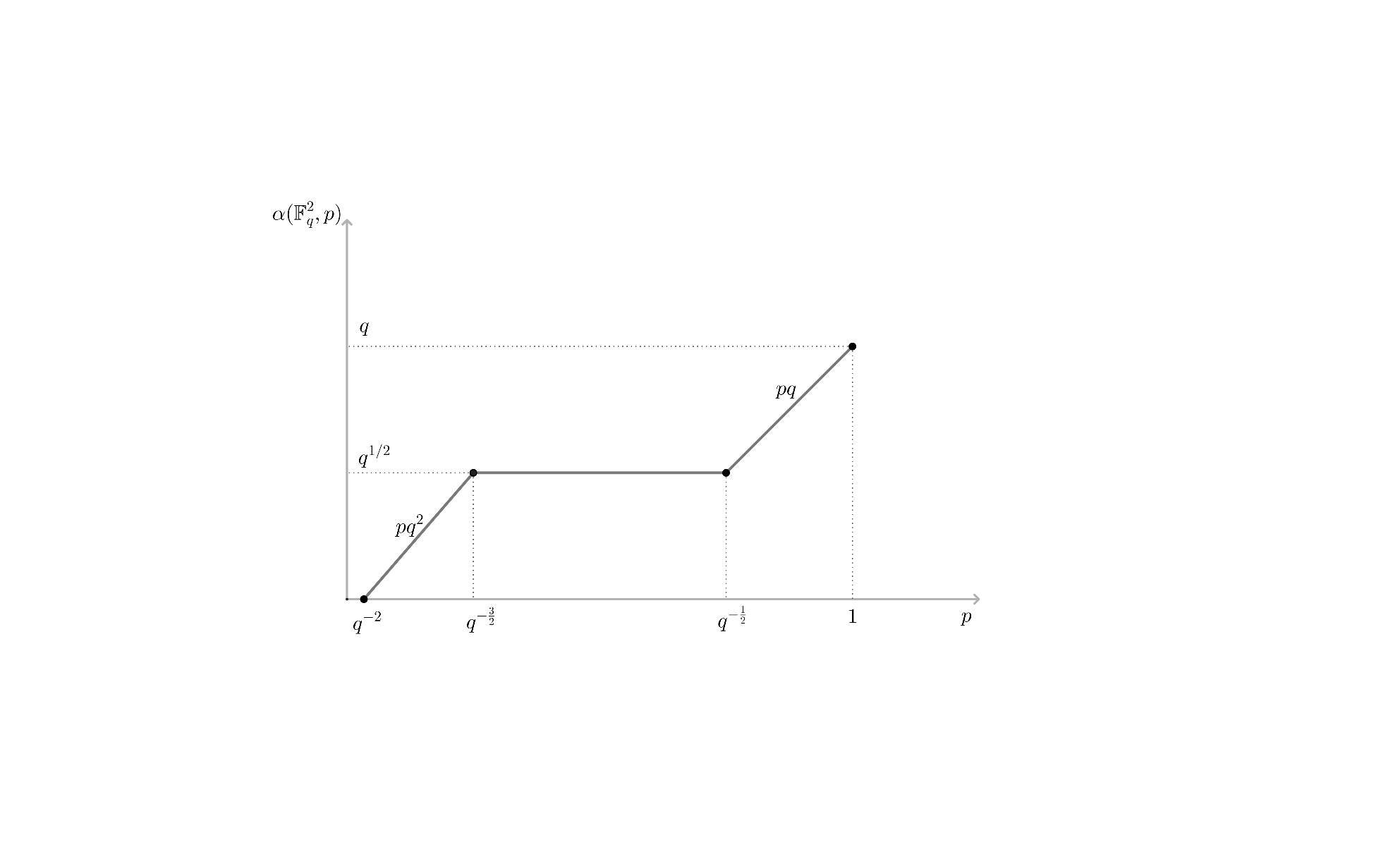}
    \caption{The behaviour of $\alpha(\mathbb{F}_q^2,p)$ in terms of $p$.}
    \label{fig:randomturan1}
\end{figure}

\begin{theorem}\label{thm:randomturan1}
As the prime power $q\to \infty$, asymptotically almost surely, we have 
\begin{equation*}
\alpha(\mathbb{F}_q^2,p)=
\l\{
    \begin{aligned}
    &\Theta(pq^{2}),~~~&q^{-2+o(1)}\le p\le q^{-3/2-o(1)},\\
    &q^{1/2+o(1)},~~~&q^{-3/2-o(1)}\le p\le q^{-1/2+o(1)},\\
    &\Theta(pq),~~~&q^{-1/2+o(1)}\le p\le 1.
\end{aligned}
\r.
\end{equation*}
Moreover, all $q^{o(1)}$ factors here are polylogarithmic.
\end{theorem}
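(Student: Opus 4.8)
The plan is to establish matching upper and lower bounds in each of the three ranges. The lower bounds are the easy direction and come from exhibiting good general position sets surviving in $\mathbf{S}_p$. For the range $p \le q^{-3/2-o(1)}$, the bound $\alpha(\mathbb{F}_q^2,p) = \Omega(pq^2)$ is essentially free: when $p$ is this small, the expected number of collinear triples in $\mathbf{S}_p$ is $O(p^3 q^4) = o(pq^2)$, so a standard deletion argument (remove one point from each collinear triple) leaves a general position set of size $(1-o(1))pq^2 = \Theta(pq^2)$. For the middle range, the lower bound $q^{1/2+o(1)}$ follows by intersecting $\mathbf{S}_p$ with the moment curve (which has $q$ points, all in general position): since collinearity on the curve is governed by a bounded-degree condition, one can find an affine image or a subset of size $\approx \min(pq, q^{1/2})$ in general position; more carefully, a second-moment or deletion argument on the curve gives $\Theta(pq)$ surviving points when $p \le q^{-1/2}$, which is at least $q^{1/2-o(1)}$ only at the top of the range, so one instead needs the container-based lower-bound construction or a direct probabilistic construction giving $q^{1/2+o(1)}$ throughout. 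For $p \ge q^{-1/2+o(1)}$, the bound $\Omega(pq)$ comes again from restricting to the moment curve and applying a deletion argument, since $\mathbf{S}_p$ meets the curve in $\approx pq$ points and the number of collinear triples among them is $O(p^3 q) = o(pq)$.

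The upper bounds are the substantive direction and are where the hypergraph container method enters. The strategy is to view general position sets in $\mathbb{F}_q^2$ as independent sets in the $3$-uniform hypergraph $H$ whose vertices are the points of $\mathbb{F}_q^2$ and whose edges are the collinear triples. This hypergraph has $n = q^2$ vertices, $\Theta(q^4)$ edges, and one checks its codegree conditions: every pair of points lies in exactly one line, hence in exactly $q-2$ collinear triples, so the pair-degree is uniform. Applying the hypergraph container theorem yields a family of containers, each of size roughly $q^{3/2}\cdot\mathrm{polylog}$ — this is the key threshold, coming from the balancedness parameter of $H$ — such that every general position set lies in some container, and the number of containers is $2^{q^{3/2+o(1)}}$ (or better). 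Then for each container $C$ of size $|C| \le q^{3/2+o(1)}$, the expected size of $\mathbf{S}_p \cap C$ is $p|C|$, and a union bound over all containers controls $\alpha(\mathbb{F}_q^2,p)$: a.a.s. no container meets $\mathbf{S}_p$ in more than $\max(p|C|, \mathrm{polylog})$ points, which drives the $q^{1/2+o(1)}$ bound in the middle range and, after iterating the container process to shrink containers further, the $O(pq)$ bound for large $p$.

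For the refined upper bounds — in particular the $\Theta(pq^2)$ upper bound for small $p$ and the sharp $O(pq)$ for large $p$ — the extra ingredient is the pseudorandomness of the point-line incidence graph of $\mathbb{F}_q^2$, which is an $(n, d, \lambda)$-graph with $\lambda = O(\sqrt{q})$ by the standard eigenvalue computation. This lets us control, for a fixed vertex subset, the distribution of lines that are "rich" in that subset via an expander-mixing-lemma argument, giving quantitatively sharp supersaturation: any set of size $m \gg q^{3/2}$ spans $\Omega(m^3/q^2)$ collinear triples, with the right constant. Combined with the container decomposition, this supersaturation is what upgrades the crude container bounds to the tight thresholds at $p \sim q^{-3/2}$ and $p \sim q^{-1/2}$ separating the three regimes.

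The main obstacle I expect is making the container size and count genuinely tight at the two transition points $p = q^{-3/2\pm o(1)}$ and $p = q^{-1/2\pm o(1)}$: the vanilla container theorem typically loses polylogarithmic or small polynomial factors in the container size, and getting the clean $q^{1/2+o(1)}$ plateau requires either iterating the container algorithm carefully or feeding in the sharp pseudorandom supersaturation estimate at each step. A secondary difficulty is the lower bound in the middle range for small $p$ within it (near $p \sim q^{-3/2}$), where the deletion argument on the moment curve only gives $\Theta(pq) = q^{-1/2+o(1)}$, falling short of $q^{1/2+o(1)}$; here one must instead use a more global construction — e.g.\ taking a random subset of an extremal general position set of size $\Theta(q)$ that is "spread out" enough that a typical $p$-random restriction still contains a general position subset of size $q^{1/2+o(1)}$, or appealing to the container structure to show such large general position sets exist inside $\mathbf{S}_p$. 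Verifying that this construction survives with the claimed probability, uniformly across the whole middle range, is the delicate point.
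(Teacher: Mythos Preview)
Your overall architecture is right (containers for the upper bound, deletion and the moment curve for the lower bound), but two points are genuinely off, one of which is the crux of the theorem.

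\textbf{Upper bound: wrong container size.} You assert that the container theorem gives containers of size $q^{3/2}\cdot\mathrm{polylog}$. That is what a vanilla application yields, and it is \emph{not enough}: with containers of size $q^{3/2+o(1)}$ you would only get $\alpha(\mathbb{F}_q^2,p)\le pq^{3/2+o(1)}$ for large $p$, missing the target $O(pq)$. The paper instead obtains containers of size at most $9q$, with $|\mathcal{C}|\le\exp(cq^{1/2}(\log q)^2)$. The mechanism is not ``sharp supersaturation in edge count'' but \emph{balanced} supersaturation: for any $P\subset\mathbb{F}_q^2$ with $|P|\gg q$, one builds a collection $\mathcal S$ of collinear triples by keeping only those lying on lines $\ell$ with $|P\cap\ell|\in[|P|/(2q),\,2|P|/q]$. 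The expander mixing lemma on the point--line incidence graph shows almost all lines are of this type, and restricting to them simultaneously controls $|\mathcal S|$, $\Delta_1(\mathcal S)$ and $\Delta_2(\mathcal S)$. It is this simultaneous control --- not merely the count $\Omega(m^3/q)$ you mention (your $m^3/q^2$ is off by a factor of $q$) --- that lets the container iteration run with $\tau\asymp k^{-1}q^{-1/2}$ and drive containers all the way down to size $O(q)$. Without it you are stuck at the Roche-Newton--Warren / Bhowmick--Roche-Newton bounds.

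\textbf{Lower bound: you are making it too hard.} The moment curve $\{(x,x^2):x\in\mathbb{F}_q\}$ is already in general position, so no deletion is needed: for $p\ge q^{-1/2+o(1)}$, Chernoff gives $|\mathbf S_p\cap\text{curve}|=(1+o(1))pq$ directly. For the middle range the ``delicate point'' you flag does not exist: $\alpha(\mathbb{F}_q^2,p)$ is nondecreasing in $p$, so once the deletion argument gives $\alpha(\mathbb{F}_q^2,q^{-3/2}/\log q)\ge(\tfrac12+o(1))q^{1/2}/\log q$, the same lower bound holds for every larger $p$ in the middle range. (Incidentally, the total number of collinear triples in $\mathbb{F}_q^2$ is $\Theta(q^5)$, not $\Theta(q^4)$; this is what makes $q^{-3/2}$ the deletion threshold.)
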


Similar to the previous works \cite{roche2022arcs,bhowmick2022counting}, our proof is grounded in the hypergraph container method. The novelty in our argument is a balanced supersaturation result for collinear triples in reasonably large subsets of $\mathbb{F}_q^2$, where the ``balanced'' property is achieved by leveraging the pseudorandomness of the point-line incidence graph of $\mathbb{F}_{q}^2$.

In higher dimensions, there are new difficulties arising in determining $\alpha(\mathbb{F}_q^d,p)$ for all $p$ (see our remark in Section~\ref{sec:remark}). Nevertheless, we extend the results in \cite{roche2022arcs,bhowmick2022counting} to every dimension $d\ge 3$ by establishing essentially tight bounds for $\alpha(\mathbb{F}_q^d,p)$ when $p$ lies in certain intervals (see Figure~\ref{fig:randomturan2}). Our proof utilizes a notion of ``critical coplanarity'' (see Lemma~\ref{critical_supersaturation_Fqd}) to partially overcome the technical challenges that arise only in higher dimensions.

\begin{figure}[ht]
    \centering
    \includegraphics[scale=0.6,trim=60 130 200 350,clip]{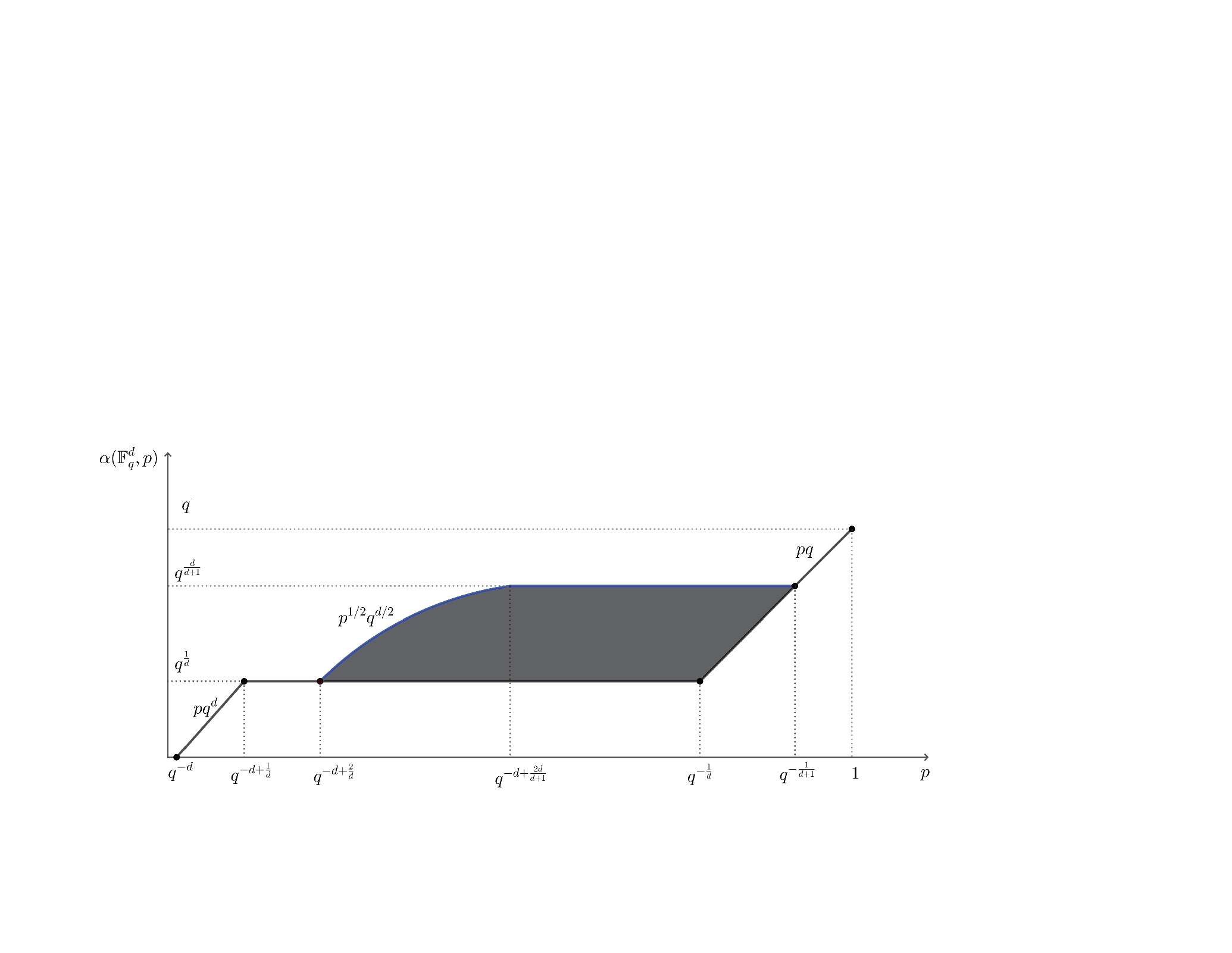}
    \caption{Lower and upper bounds for $\alpha(\mathbb{F}_q^d,p)$ in terms of $p$.}
    \label{fig:randomturan2}
\end{figure}

\begin{theorem}\label{thm:randomturan2}
Let $d\ge 3$ be a fixed integer. As the prime power $q\to \infty$, asymptotically almost surely, we have 
\begin{equation*}
\alpha(\mathbb{F}_q^d,p)=
\l\{
    \begin{aligned}
    &\Theta(pq^d),~~&q^{-d+o(1)} \le p\le q^{-d+\frac{1}{d}-o(1)},\\
    &q^{\frac{1}{d}+o(1)},~~&q^{-d+\frac{1}{d}-o(1)}\le p\le q^{-d+\frac{2}{d}+o(1)},\\
    &\Theta(pq),~~&q^{-\frac{1}{d+1}+o(1)}\le p\le 1.
\end{aligned}
\r.
\end{equation*}
In addition, when $q^{-d+\frac{2}{d}+o(1)}\le p\le q^{-\frac{1}{d+1}+o(1)}$, we have
\begin{equation*}
    \max\l\{q^{\frac{1}{d}-o(1)},~\Omega(pq)\r\}\le \alpha(\mathbb{F}^d_q,p)\le \min\l\{p^{\frac{1}{2}}q^{\frac{d}{2}+o(1)},~q^{\frac{d}{d+1}+o(1)}\r\}.
\end{equation*} Moreover, all $q^{o(1)}$ factors here are polylogarithmic.
\end{theorem}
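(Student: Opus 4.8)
I would obtain the three lower bounds from three separate constructions. When $p\le q^{-d+1/d-o(1)}$, a deletion argument suffices: with high probability $|\mathbf S_p|=(1+o(1))pq^d$, and the expected number of forbidden configurations in $\mathbf S_p$ --- sets of $j$ points ($3\le j\le d+1$) spanning a flat of dimension at most $j-2$ --- is $o(pq^d)$, the binding constraint coming from $(d+1)$-tuples lying in a common hyperplane, so deleting one point from each forbidden configuration leaves a general position set of size $\Omega(pq^d)$. When $p\ge q^{-d+1/d-o(1)}$, a greedy argument gives $\alpha\ge q^{1/d-o(1)}$: building a general position set inside $\mathbf S_p$ one point at a time, once $i-1$ points in general position have been chosen the set of points that cannot be added --- the union of the hyperplanes spanned by $d$-subsets of the chosen points --- has size at most $\binom{i-1}{d}q^{d-1}$, a $(1-\Omega(1))$-fraction of $\mathbb F_q^d$ while $i=O(q^{1/d})$, so with high probability $\mathbf S_p$ meets the complement and the process survives for $q^{1/d-o(1)}$ steps. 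Finally, whenever $pq\to\infty$, the moment curve $\{(x,x^2,\dots,x^d):x\in\mathbb F_q\}$ is a general position set of size $q$, so $\mathbf S_p$ restricted to it is a general position set of size $(1+o(1))pq$, giving $\alpha\ge\Omega(pq)$.

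\textbf{Upper bounds.} Since $\alpha(\mathbb F_q^d,p)\le|\mathbf S_p|=(1+o(1))pq^d$ with high probability, the first range is settled, and the remaining upper bounds I would obtain from the hypergraph container method driven by the critical-coplanarity supersaturation result, Lemma~\ref{critical_supersaturation_Fqd}. For $P\subseteq\mathbb F_q^d$ that is not too small this lemma produces a critical flat-dimension $k=k(P)$ --- morally $d-1$ when $P$ is ``spread out'', and smaller when a large part of $P$ sits on a low-dimensional flat --- together with a balanced $(k+2)$-uniform hypergraph $\mathcal H$ on $P$ whose edges are the $(k+2)$-subsets of $P$ contained in a common $k$-flat. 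Since a general position set has at most $\ell+1$ points on any $\ell$-flat, the concentrated pieces can be removed essentially for free, while for the spread-out part the hyperplane-$(d+1)$-tuple hypergraph is the densest relevant one (roughly $|P|^{\,d+1}/q$ edges on $|P|$ vertices), which is why its container step has fingerprint size only about $q^{1/d}$. Iterating the container lemma down to a chosen stopping size $M$ then yields a family $\mathcal C=\mathcal C(M)$ such that every general position set lies in some $C\in\mathcal C$, every $|C|\le M$, and $\log|\mathcal C|$ is bounded by an explicit non-increasing function of $M$; in particular $M$ may be taken as small as $q^{1+o(1)}$, the extremal threshold supplied by Lemma~\ref{supersaturation_Fqd}. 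For every $t\ge 2\max\{eMp,\ \log_2|\mathcal C|\}$ a union bound gives
\[
\Pr[\alpha(\mathbb F_q^d,p)\ge t]\ \le\ |\mathcal C|\binom{M}{t}p^t\ \le\ 2^{\log_2|\mathcal C|}(eMp/t)^t\ =\ o(1),
\]
so $\alpha(\mathbb F_q^d,p)=O(\max\{Mp,\ \log|\mathcal C|\})$ with high probability, and I would finish by optimizing the stopping size $M$ in each regime of $p$: stopping at a large $M$ (where $\log|\mathcal C|$ is still only $q^{1/d+o(1)}$) gives $q^{1/d+o(1)}$ throughout the middle range; pushing the iteration all the way to $M=q^{1+o(1)}$ gives $q^{d/(d+1)+o(1)}$ for $p\le q^{-1/(d+1)+o(1)}$ and $O(pq)$ for $p\ge q^{-1/(d+1)+o(1)}$; and an intermediate stopping size optimizes the trade-off between container size and container count to $p^{1/2}q^{d/2+o(1)}$ in the lower part of the intermediate range.

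\textbf{Main obstacle and routine points.} The heart of the argument is Lemma~\ref{critical_supersaturation_Fqd}. For $d=2$ one makes the supersaturation of collinear triples balanced by exploiting the pseudorandomness of the point--line incidence graph of $\mathbb F_q^2$, so that the collinear triples supported on rich lines are well spread; for $d\ge 3$ the obstruction is the behaviour of $k$-flats with $1<k<d-1$, for which no comparably clean incidence or pseudorandomness input is available. Controlling the codegrees of $\mathcal H$ uniformly over all $P$ of the critical size --- in particular quantifying how far $P$ can be from ``spread out'' without being genuinely concentrated on a single low-dimensional flat --- and tracking how $k(P)$ and all the container parameters shift along the iteration, is the delicate part; it is exactly this gap in understanding the intermediate flats that leaves the upper and lower bounds apart for $q^{-d+2/d+o(1)}\le p\le q^{-1/(d+1)+o(1)}$. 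The rest is routine: the $o(1)$'s in the exponents are handled uniformly (fix a small $\varepsilon>0$, prove everything with $o(1)$ replaced by $\varepsilon$, and let $\varepsilon\to0$), the $O(\log q)$ container iterations contribute only polylogarithmic factors to the error terms, and the Chernoff estimates used throughout hold with high probability.
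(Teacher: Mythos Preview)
Your high-level plan is right, but you have misread Lemma~\ref{critical_supersaturation_Fqd}. It does not produce a ``critical flat-dimension $k(P)$'' or a variable-uniformity hypergraph: in the paper a \emph{critical} coplanar $(d{+}1)$-tuple is simply $d{+}1$ points on a common hyperplane every proper subset of which is affinely independent, and the lemma just says that any $P$ with $|P|\ge 4dq^{d-1}$ contains $\Omega(|P|^{d+1}/q)$ of them. The purpose of ``critical'' is codegree control (since any $d$ of the points span the hyperplane, $\Delta_{d+1}=1$), which yields $\tau\approx |P|^{-1}q^{1/d}$ and hence a container theorem (Theorem~\ref{thm:container3}) giving the $q^{1/d+o(1)}$ and $p^{1/2}q^{d/2+o(1)}$ upper bounds. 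Crucially, the size hypothesis $|P|\ge 4dq^{d-1}$ is genuinely used (a $(d{-}2)$-flat has $q^{d-2}$ points and no critical tuples at all), so this iteration cannot be pushed below container size $\approx q^{d-1}$, let alone to $M=q^{1+o(1)}$ as you propose. The $q^{d/(d+1)+o(1)}$ and $O(pq)$ bounds instead come from a \emph{separate} container theorem (Theorem~\ref{thm:container2}) fed by the supersaturation for \emph{all} coplanar $(d{+}1)$-tuples, Lemma~\ref{supersaturation_Fqd}, which applies once $|P|\ge q+O(1)$ but with the weaker codegree outcome $\tau\approx q^{-1/(d+1)}$ (the $\Delta_2$ bound there requires a different argument, via $|P\cap F|\le (d+1)|\mathcal H|^{1/(d+1)}$ for proper flats $F$). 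So the upper-bound argument really uses two distinct supersaturation inputs and two container theorems in different ranges of $p$; there is no single iteration with a floating ``critical dimension''.

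On the lower bounds, your deletion and moment-curve arguments match the paper. The $q^{1/d-o(1)}$ bound is obtained there more simply by monotonicity of $\alpha(\mathbb F_q^d,p)$ in $p$, applied to the deletion estimate at $p\approx q^{-d+1/d}$; your greedy argument would instead need an extra concentration step to guarantee that $\mathbf S_p$ meets the complement of an adaptively chosen union of $\binom{i-1}{d}$ hyperplanes at every stage.
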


Note that Theorem~\ref{thm:randomturan2} determines the asymptotic value of $\log(\alpha(\mathbb{F}_q^d,p))$ when $p$ lies in the intervals $[q^{-d}, q^{-d+2/d+o(1)}]$ and $[q^{-1/(d+1)+o(1)}, 1]$, which leaves a gap between the lower and upper bounds when $p$ lies in the interval $[q^{-d+2/d+o(1)}, q^{-1/(d+1)+o(1)}]$. It is an interesting problem to close this gap.

\subsection{Counting results}

In this subsection, we focus on the following two counting problems: How many general position sets does $\mathbb{F}^d_q$ contain? And how many of them are of a fixed size $m$? Using the moment curve defined in the previous subsection, one can easily obtain the lower bounds $2^{q}$ and $\binom{q}{m}$ to these questions respectively.

For the upper bound, Bhowmick--Roche-Newton \cite{bhowmick2022counting} proved that $\mathbb{F}_q^2$ contains at most $2^{(1+o(1))q}$ general position sets. Note that the exponents of their upper bound and the lower bound $2^q$ match asymptotically. The following theorem extends this result to all dimensions.

\begin{theorem}\label{thm:counting1}
    Let $d\ge 2$ be a fixed integer and the prime power $q$ tend to infinity. The number of general position sets in $\mathbb{F}_q^d$ is at most $2^{q+q^{\frac{2d}{2d+1}+o(1)}}$.
\end{theorem}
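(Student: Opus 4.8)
The plan is to use the hypergraph container method applied to the 3-uniform (or $(d+1)$-uniform) hypergraph $\mathcal{H}$ on vertex set $\mathbb{F}_q^d$ whose edges are the minimal collinear/coplanar configurations — actually the right hypergraph here is the one whose edges are the $(d+1)$-subsets of points lying on a common hyperplane. An independent set in $\mathcal{H}$ is exactly a general position set, so counting general position sets reduces to counting independent sets in $\mathcal{H}$. The container method produces a relatively small family $\mathcal{C}$ of "container" sets, each of size at most $(1-\delta)$ of the ground set in some effective sense, such that every independent set is contained in some $C \in \mathcal{C}$; one then bounds the number of independent sets by $|\mathcal{C}|$ times $2^{\max_C (\text{number of general position subsets of } C)}$, and iterates. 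The key input the method needs is a balanced supersaturation statement: any subset of $\mathbb{F}_q^d$ substantially larger than $(1+o(1))q$ spans many hyperplane-$(d+1)$-tuples, and moreover these are spread out so that no point (or small set of points) lies in too large a fraction of them. The paper has flagged exactly such tools — Lemma~\ref{supersaturation_Fqd} and the "critical coplanarity" Lemma~\ref{critical_supersaturation_Fqd} — so I would invoke those.

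Concretely, I would first set up the container theorem in the form: there is a family $\mathcal{C}$ of subsets of $\mathbb{F}_q^d$ with $|\mathcal{C}| \le 2^{q^{2d/(2d+1)+o(1)}}$ such that (i) every general position set is a subset of some $C\in\mathcal{C}$, and (ii) every $C\in\mathcal{C}$ has the property that it does not contain many disjoint hyperplane tuples, which via the supersaturation lemma forces $|C| \le q + q^{2d/(2d+1)+o(1)}$ — or more precisely that the number of general position subsets of each $C$ is at most $2^{q+o(q)}$, but we need the sharper count. The exponent $2d/(2d+1)$ is the tell-tale sign of a single clean application of the container lemma where one optimizes the trade-off between the number of containers ($2^{\text{(fingerprint size)}}$) and the slack in the container size; the fingerprint size scales like a root of the degree parameter coming from supersaturation, and balancing $|\mathcal{C}| \cdot 2^{(\text{container excess})}$ against the loss per step yields $q^{2d/(2d+1)}$. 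So the bound becomes
\[
\#\{\text{general position sets}\} \le |\mathcal{C}| \cdot 2^{\,q + q^{2d/(2d+1)+o(1)}} \le 2^{\,q + q^{2d/(2d+1)+o(1)}},
\]
where in the last step the two exponents of the same order merge.

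The main obstacle — and the step I would spend the most care on — is establishing the balanced supersaturation for coplanar $(d+1)$-tuples with the right degree bounds, because the naive count of hyperplane tuples in a set of size $n = q + t$ is not automatically "balanced": a set could be mostly a hyperplane plus a few extra points, concentrating all the configurations. This is precisely why the authors introduce "critical coplanarity": one restricts attention to tuples that are coplanar but remain "robustly" so, i.e. the relevant configuration is not degenerate in a lower-dimensional sense, which is what lets one bound the codegree of any fixed point (or fixed pair) in the hypergraph. For $d=2$ the paper sidesteps part of this using the pseudorandomness of the point–line incidence graph; for general $d$ I expect the argument to go through the elementary decomposition of $\mathbb{F}_q^d$ into $q$ parallel hyperplanes, a counting of how the $n$ points distribute among them, and a convexity/double-counting estimate showing that if $t$ is large then many "critical" tuples appear and they are not too concentrated. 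Once the balanced supersaturation is in hand with parameters of the expected shape, feeding it into the container lemma and running the arithmetic to extract the exponent $q^{2d/(2d+1)+o(1)}$ is routine; I would present the supersaturation lemma as the crux and then treat the container application and the final exponent bookkeeping as a short deduction.
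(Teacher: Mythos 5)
Your overall plan is exactly the paper's: apply the hypergraph container method to the hypergraph of coplanar $(d+1)$-tuples, iterate a balanced supersaturation lemma to produce containers of size just slightly above $q$, and optimize the container-size parameter $k$ so that $\log|\mathcal{C}|$ and the excess $|C|-q$ balance at $q^{2d/(2d+1)+o(1)}$. The intermediate container statement you write down (containers of size $q+q^{2d/(2d+1)+o(1)}$, of which there are at most $2^{q^{2d/(2d+1)+o(1)}}$) is precisely Theorem~\ref{thm:container2} with $k=1+q^{-1/(2d+1)}$, and the final deduction is the same calculation the paper performs.

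However, one piece of your reasoning is substantively wrong and would derail the argument if followed. You attribute the codegree control (balancedness) to the critical-coplanarity Lemma~\ref{critical_supersaturation_Fqd}, reasoning that restricting to ``robust'' tuples prevents degenerate concentration. That lemma only applies when $|P|\ge 4dq^{d-1}$, so the iteration it supports (Theorem~\ref{thm:container3}) cannot produce containers smaller than about $4dq^{d-1}$, which is far above the required $(1+o(1))q$ for $d\ge 3$ (and still $8q$ for $d=2$). Critical coplanarity is the tool for the random Tur\'an result in high dimensions (Theorem~\ref{thm:randomturan2}), not for the counting bound. The codegree bounds actually needed for Theorem~\ref{thm:counting1} are supplied by Lemma~\ref{refined_supersaturation_Fqd}, which works with ordinary coplanar $(d+1)$-tuples: the key observation there is that no $j$-flat ($j\le d-1$) can meet $P$ in more than $(d+1)|\mathcal{H}|^{1/(d+1)}$ points, since otherwise the $(d+1)$-tuples inside that flat alone would already exceed $|\mathcal{H}|$. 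That self-referential bound on flat intersections, combined with the lower bound on $|\mathcal{H}|$ from Lemma~\ref{supersaturation_Fqd}, is what controls $\Delta_2,\dots,\Delta_{d+1}$. So the concentration scenario you worried about (most of $P$ in one hyperplane) is indeed the obstacle, but it is defused by this counting argument, not by passing to critical tuples.
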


For counting general position sets with a fixed size, Bhowmick--Roche-Newton \cite{bhowmick2022counting} proved that $\mathbb{F}_q^2$ contains at most $\binom{(1+o(1))q}{m}$ general position sets of size $m$ for $m \in [q^{2/3+o(1)}, q]$, and this matches the lower bound $\binom{q}{m}$ asymptotically in the exponent. We also extend this result to all dimensions.

\begin{theorem}\label{thm:counting2}
    Let $d\ge 2$ be a fixed integer and the prime power $q$ tend to infinity. For every integer $m \ge q^{d/(d+1)}(\log q)^3$, the number of general position sets of size $m$ in $\mathbb{F}_q^d$ is at most $\binom{\left(1+o(1)\right)q}{m}$.
\end{theorem}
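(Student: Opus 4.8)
The plan is to view a general position set in $\mathbb F_q^d$ as an independent set of the $(d+1)$-uniform hypergraph $\mathcal H$ on vertex set $\mathbb F_q^d$ whose edges are the $(d+1)$-element subsets lying on a common hyperplane, run the hypergraph container method on $\mathcal H$, and then sum a single binomial coefficient over the resulting containers. Concretely, feeding the balanced supersaturation for coplanar $(d+1)$-tuples into the hypergraph container lemma --- this supersaturation coming, for $d=2$, from the expander-mixing property of the point--line incidence graph of $\mathbb F_q^2$ and, for $d\ge 3$, from the critical coplanarity of Lemma~\ref{critical_supersaturation_Fqd}, while the saturation level $(1+o(1))q$ is supplied by Lemma~\ref{supersaturation_Fqd} --- produces a family $\mathcal C$ of subsets of $\mathbb F_q^d$ such that: (i) every general position set is contained in some $C\in\mathcal C$; (ii) $|C|\le(1+o(1))q$ for every $C\in\mathcal C$; and (iii) $\log_2|\mathcal C|\le q^{d/(d+1)}(\log q)^{O(1)}$, where the exponent $d/(d+1)$ matches the expected $|V(\mathcal H)|^{1/(d+1)}$ scaling for a $(d+1)$-uniform hypergraph (note $|V(\mathcal H)|=q^d$). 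This is, up to cosmetic changes, precisely the container family already used to establish Theorem~\ref{thm:counting1}; I would isolate (i)--(iii) as a standalone proposition and quote it here.

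Granting $\mathcal C$, the theorem follows from an elementary estimate. Put $n_0:=\max_{C\in\mathcal C}|C|=(1+o(1))q$; we may assume $1\le m\le n_0$, as otherwise no general position set has size $m$. By (i), the number of general position sets of size $m$ is at most
\[
\sum_{C\in\mathcal C}\binom{|C|}{m}\ \le\ |\mathcal C|\cdot\binom{n_0}{m},
\]
so it suffices to exhibit $n_1=(1+o(1))q$ with $|\mathcal C|\binom{n_0}{m}\le\binom{n_1}{m}$. For $n_0\le n_1\le 2q$ one has
\[
\frac{\binom{n_1}{m}}{\binom{n_0}{m}}=\prod_{j=0}^{m-1}\frac{n_1-j}{n_0-j}\ \ge\ \Bigl(1+\frac{n_1-n_0}{2q}\Bigr)^{m}\ \ge\ \exp\!\Bigl(\frac{m(n_1-n_0)}{4q}\Bigr),
\]
so the choice $n_1:=n_0+4q\ln|\mathcal C|/m$ does the job. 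By (iii) and the hypothesis $m\ge q^{d/(d+1)}(\log q)^3$ we have $\ln|\mathcal C|=o(m)$, hence $n_1-n_0=o(q)$ and $n_1=(1+o(1))q$; thus the number of general position sets of size $m$ is at most $\binom{(1+o(1))q}{m}$, as claimed.

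The genuine difficulty lies entirely in property (iii): producing a container family of size only $2^{q^{d/(d+1)}(\log q)^{O(1)}}$, rather than $2^{q^{1-\varepsilon}}$, is exactly what forces the threshold to sit near $q^{d/(d+1)}$ and is where all the work is. It requires the supersaturation for coplanar $(d+1)$-tuples to be \emph{balanced} --- codegrees under control, edges well spread --- so that the container iteration can be driven down to containers of size $(1+o(1))q$ while accumulating only a fingerprint of size $|V(\mathcal H)|^{1/(d+1)+o(1)}$. For $d=2$ this balance is exactly what the pseudorandomness of the point--line incidence graph of $\mathbb F_q^2$ buys; for $d\ge 3$ it is what restricting attention to critically coplanar tuples (Lemma~\ref{critical_supersaturation_Fqd}) buys in the presence of low-dimensional degeneracies. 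Once (i)--(iii) are available, the rest --- including reconciling the $(\log q)^{O(1)}$ from the container count with the $(\log q)^3$ in the statement --- is the routine bookkeeping carried out above.
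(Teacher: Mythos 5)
Your overall strategy --- construct a single container family $\mathcal C$ in which every container has size $(1+o(1))q$ while $\log|\mathcal C|=q^{d/(d+1)+o(1)}$, then absorb $|\mathcal C|$ into a single binomial via $|\mathcal C|\binom{n_0}{m}\le\binom{n_1}{m}$ --- is reasonable, and the binomial comparison itself is correct. But the key premise (ii)$+$(iii) is not established where you claim, and in particular the family is \emph{not} the one used for Theorem~\ref{thm:counting1}. The relevant container theorem, Theorem~\ref{thm:container2}, encodes an explicit trade-off: containers of size $\le(1+\delta)q$ cost $\log|\mathcal C|\le c(q/\delta)^{d/(d+1)}(\log q)^2$, so shrinking $\delta$ inflates $|\mathcal C|$. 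The proof of Theorem~\ref{thm:counting1} takes $\delta=q^{-1/(2d+1)}$ and hence only gets $\log|\mathcal C|\le cq^{2d/(2d+1)}(\log q)^2$; since $2d/(2d+1)>d/(d+1)$, your property (iii) fails for that family. Plugging the actual count into your estimate near the threshold $m\approx q^{d/(d+1)}(\log q)^3$ gives $n_1-n_0=4q\ln|\mathcal C|/m\gg q$, so $n_1>2q$ and the comparison breaks down exactly where it is needed. You also misattribute the supersaturation: this counting theorem rests on Lemma~\ref{supersaturation_Fqd} and its refinement (for every $d\ge 2$), while the expander-mixing argument and the critical coplanarity of Lemma~\ref{critical_supersaturation_Fqd} serve Theorems~\ref{thm:container1} and~\ref{thm:container3} respectively, not this one.

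Your scheme is repairable by a different, still fixed, choice of $\delta$: with $\delta=(\log q)^{-1}$, Theorem~\ref{thm:container2} gives $|C|\le(1+(\log q)^{-1})q$ and $\log|\mathcal C|\le cq^{d/(d+1)}(\log q)^{d/(d+1)+2}$, and since $d/(d+1)+2<3$ one does get $\ln|\mathcal C|=o(m)$ throughout the stated range, closing your argument. The paper instead tunes $\delta=\epsilon(m)$ so that $(q/\epsilon)^{d/(d+1)}(\log q)^2=\epsilon m$, balancing the fingerprint cost against the container-size cost, and then bounds $|\mathcal C|\binom{(1+\epsilon)q}{m}\le\binom{2^{c\epsilon}(1+\epsilon)q}{m}=\binom{(1+o(1))q}{m}$ directly, automatically since $\epsilon=o(1)$. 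Either way, the exponent check that you defer as ``routine bookkeeping'' is precisely what produces the $(\log q)^3$ threshold in the statement, and it cannot be skipped.
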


For $m$ much smaller than $q^{d/(d+1)}$, the upper and lower bounds remain widely far. In this direction, Roche-Newton--Warren \cite{roche2022arcs} proved that $\mathbb{F}_q^2$ contains at most $\binom{q^{2+o(1)}/m}{m}$ many general position sets of size $m$ for $m \geq q^{1/2 + o(1)}$. The following theorem greatly improves this result. 

\begin{theorem}\label{thm:counting3}
    There exists an absolute constant $c> 0$ such that for sufficiently large prime power $q$ and every integer $m\ge q^{1/2}(\log q)^2$, the number of general position sets of size $m$ in $\mathbb{F}_q^2$ is at most $\binom{cq}{m}$.   
\end{theorem}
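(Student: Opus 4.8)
The plan is to regard a general position set in $\mathbb{F}_q^2$ as an independent set of the $3$-uniform hypergraph $\mathcal{H}$ on vertex set $\mathbb{F}_q^2$ whose edges are the collinear triples, and to use the hypergraph container method to trap every such independent set inside a small family of small sets. Concretely, I would aim to produce a family $\mathcal{C}$ of subsets of $\mathbb{F}_q^2$ such that every general position set is contained in some member of $\mathcal{C}$, every $C\in\mathcal{C}$ satisfies $|C|\le c_0 q$ for an absolute constant $c_0$, and $\log_2|\mathcal{C}|\le A\,q^{1/2}(\log q)^2$ for an absolute constant $A$. Writing $f(m)$ for the number of general position sets of size $m$, such a family gives at once
\[
f(m)\;\le\;\sum_{C\in\mathcal{C}}\binom{|C|}{m}\;\le\;|\mathcal{C}|\binom{c_0 q}{m},
\]
and the whole problem reduces to building $\mathcal{C}$ and then comparing $|\mathcal{C}|\binom{c_0 q}{m}$ with $\binom{cq}{m}$.

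To build $\mathcal{C}$ I would feed the balanced supersaturation for collinear triples into the container lemma and iterate. For a set $P\subseteq\mathbb{F}_q^2$ with $|P|=N\ge c_0 q$, balanced supersaturation---this is exactly where the pseudorandomness of the point--line incidence graph of $\mathbb{F}_q^2$ enters---supplies a subhypergraph of $\mathcal{H}[P]$ with $\Omega(N^3/q)$ edges whose pair co-degrees are $O(N/q)$ up to polylogarithmic factors (the triple co-degrees being automatically at most $1$, since three points lie on at most one line). Feeding these co-degree bounds into the container lemma one may localize with parameter $\tau=q^{1/2}/N$ up to polylogarithmic factors---the triple co-degree being the binding constraint---so that a container on $N$ vertices is replaced by at most $\binom{N}{\le\tau N}\le\exp\!\big(O(q^{1/2}\log q)\big)$ subcontainers, each substantially sparser in $\mathcal{H}$ than its parent; since an edge-sparse set is vertex-small by supersaturation, iterating $O(\log q)$ rounds brings the vertex count from $q^2$ down below $c_0 q$. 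Multiplying the per-round branching $\exp(O(q^{1/2}\log q))$ over the $O(\log q)$ rounds yields $\log_2|\mathcal{C}|=O(q^{1/2}(\log q)^2)$, with every container of size at most $c_0 q$. (The balanced supersaturation lemma and the machinery for iterating containers are developed in the earlier sections, so in practice this step is an application of those results with $\tau$ chosen as above.)

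It remains to pass from $f(m)\le|\mathcal{C}|\binom{c_0 q}{m}$ to the claimed bound. Every general position set in $\mathbb{F}_q^2$ has at most $(1+o(1))q$ points by Lemma~\ref{supersaturation_Fqd}, so we may assume $m\le(1+o(1))q$ (otherwise $f(m)=0$); in particular $m<c_0 q$ for $q$ large, since $c_0>1$. Put $c=Kc_0$ with $K\ge 2$ an absolute constant to be chosen. For $0\le i\le m-1$ the function $i\mapsto(cq-i)/(c_0 q-i)$ is increasing (its derivative is $(c-c_0)q/(c_0 q-i)^2>0$), so each factor of
\[
\frac{\binom{cq}{m}}{\binom{c_0 q}{m}}\;=\;\prod_{i=0}^{m-1}\frac{cq-i}{c_0 q-i}
\]
is at least $cq/(c_0 q)=K$, giving $\binom{cq}{m}\ge K^m\binom{c_0 q}{m}$. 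Since $m\ge q^{1/2}(\log q)^2$ we have $K^m\ge 2^{(\log_2 K)\,q^{1/2}(\log q)^2}$, so choosing $K$ large enough that $\log_2 K>A$ forces $K^m\ge 2^{A q^{1/2}(\log q)^2}\ge|\mathcal{C}|$. Therefore
\[
f(m)\;\le\;|\mathcal{C}|\binom{c_0 q}{m}\;\le\;K^m\binom{c_0 q}{m}\;\le\;\binom{cq}{m},
\]
which is the desired conclusion with $c=Kc_0$ an absolute constant.

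The step I expect to be the main obstacle is the balanced supersaturation together with the bookkeeping through the iteration. One needs the pseudorandomness of the incidence graph to keep the pair co-degrees at essentially the scale $N/q$, so that the localization parameter is dictated by the triple co-degree and is $q^{1/2}/N$ up to lower-order factors; and one needs the iteration organised so that precisely two logarithmic factors accumulate---one from the per-round branching $\exp(O(q^{1/2}\log q))$ (morally the entropy cost of recording which $q^{1/2+o(1)}$ vertices form the fingerprint, out of as many as $q^2$), and one from the $O(\log q)$ rounds needed to shrink $q^2$ down to $O(q)$---so that $\log_2|\mathcal{C}|=O(q^{1/2}(\log q)^2)$ and the hypothesis $m\ge q^{1/2}(\log q)^2$ is exactly what the argument can afford. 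Any further loss, for instance an extra polylogarithmic factor surviving from the co-degree estimates, would require a stronger lower bound on $m$; everything downstream of producing $\mathcal{C}$ is routine.
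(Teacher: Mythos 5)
Your proposal matches the paper's argument: you reconstruct exactly the container theorem (the paper's Theorem~\ref{thm:container1}, obtained by iterating the hypergraph container lemma against the balanced supersaturation for collinear triples that comes from expander mixing on the point--line incidence graph, with localization parameter $\tau\sim q^{1/2}/|C|$), and then conclude via the same comparison $|\mathcal{C}|\binom{c_0 q}{m}\le K^m\binom{c_0 q}{m}\le\binom{Kc_0 q}{m}$ using $m\ge q^{1/2}(\log q)^2$. The only cosmetic difference is that the paper works with the explicit constants $c_0=9$ and $c=2^c\cdot 9$ and states the binomial ratio bound $\binom{cq}{m}/\binom{9q}{m}\ge(c/9)^m$ directly rather than via monotonicity of the factors, so this is the same proof.
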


It is known that $\mathbb{F}_q^2$ contains at least $\binom{q^{2-o(1)}}{m}$ many general position sets of size $m$ for $m \ll q^{1/2}$ (see Theorem~1 of~\cite{roche2022arcs}). Hence, one cannot hope for a polynomial relaxation of the condition ``$m \geq q^{1/2 + o(1)}$'' in Theorem~\ref{thm:counting3}.

Our proofs of Theorems~\ref{thm:counting1} and~\ref{thm:counting2} generalize those in \cite{bhowmick2022counting} while overcoming some technical challenges that arise only in higher dimensions (mostly in upper bounding codegrees during the construction of containers). On the other hand, our argument for Theorem~\ref{thm:counting3} crucially relies on the pseudorandomness of the point-line incidence graph of $\mathbb{F}_{q}^2$ just like Theorem~\ref{thm:randomturan1}.

\bigskip

The rest of this paper is organized as follows. In Section~\ref{sec:preliminary}, we present some useful preliminary results, especially the hypergraph container lemma. In Section~\ref{sec:randomturan1}, we utilize the pseudorandomness of the point-line incidence graph of $\mathbb{F}_{q}^2$ to prove Theorem~\ref{thm:randomturan1}. Section~\ref{sec:counting} is devoted to the proofs of Theorems~\ref{thm:counting1}, \ref{thm:counting2}, and~\ref{thm:counting3}. In Section~\ref{sec:randomturan2}, we use a supersaturation result for critical coplanar tuples to establish Theorem~\ref{thm:randomturan2}. Finally, we include some remarks in Section~\ref{sec:remark}. 

Throughout the paper, the asymptotic notations are always understood with the prime power $q$ tending to infinity. We say that a sequence of events $\textbf{A}_q$ happens \textit{asymptotically almost surely} (a.a.s.) if $\text{Pr}[\textbf{A}_q] \rightarrow 1$ as $q \rightarrow \infty$. We omit floors and ceilings whenever they are not crucial for the sake of clarity in our presentation. The functions $\exp(x)$ and $\log(x)$ are both in base $2$. We only consider unordered tuples of points in $\mathbb{F}_q^d$, hence a ``$k$-tuple'' is equivalent to a point set of size $k$.

\section{Preliminaries}\label{sec:preliminary}

We present some preliminary results in this section. Within our proofs of Theorems~\ref{thm:randomturan1} and~\ref{thm:randomturan2}, we shall make free use of Markov's inequality and Chernoff's bound (see e.g. Theorem 22.6 in~\cite{FK16}). For convenience, we call a $k$-dimensional affine subspace in $\mathbb{F}_{q}^d$ a \textit{$k$-flat}. The following counting result for the number of $k$-flats in $\mathbb{F}_q^d$ will be useful for us.

\begin{lemma}[Goldman--Rota~\cite{goldman1969number}]\label{flat_counting}
    The number of $k$-flats in $\mathbb{F}_q^d$ is $q^{d-k} \binom{d}{k}_q$, and 
    the number of $k$-flats in $\mathbb{F}_q^d$ containing a fixed point is $\binom{d}{k}_q$, where
    \begin{equation*}
        \binom{d}{k}_q := \frac{(1-q^d)(1-q^{d-1})\cdots(1-q^{d-k+1})}{(1-q)(1-q^2)\cdots(1-q^k)} = (1+o(1))q^{k(d-k)}.
    \end{equation*}
\end{lemma}

One of the key tools in our proofs will be the following well-known hypergraph container lemma, proved independently by Balogh--Morris--Samotij~\cite{balogh2015independent} and Saxton--Thomason~\cite{saxton2015hypergraph}.  

Let $r\ge 2$ be an integer. An $r$-graph $\mathcal{H}$ is a collection of $r$-subsets (called edges) of some finite ground set $V$, which is called the vertex set of $\mathcal{H}$. We identify an $r$-graph $\mathcal{H}$ with its edge set and use $V(\mathcal{H})$ to denote its vertex set. A subset of $V(\mathcal{H})$ is said to be an \textit{independent set} if it contains no edges of $\mathcal{H}$. Given a subset $U\subset V(\mathcal{H})$, the \textit{degree} $\delta(U)$ of $U$ is the number of edges in $\mathcal{H}$ containing $U$. For $i\in [r]$, let $\binom{V(\mathcal{H})}{i}$ be the collection of all $i$-subsets of $V(\mathcal{H})$ and 
\begin{equation*}
    \Delta_i(\mathcal{H}) := \max\left\{\delta(U) :~ U\in \binom{V(\mathcal{H})}{i}\right\}, 
\end{equation*}
and for every real number $\tau>0$, define 
\begin{equation*}
    \Delta(\mathcal{H},\tau)
    := \frac{2^{\binom{r}{2}-1}|V(\mathcal{H})|}{r|\mathcal{H}|}\sum_{i=2}^{r}\frac{\Delta_{i}(\mathcal{H})}{2^{\binom{i-1}{2}}\tau^{i-1}}.
\end{equation*}

\begin{lemma}[Balogh--Morris--Samotij~\cite{balogh2015independent},  Saxton--Thomason~\cite{saxton2015hypergraph}]\label{hypergraphcontainer}
    Let $\mathcal{H}$ be an $r$-graph and $\varepsilon, \tau \in (0,1/2)$ be real numbers. 
    Suppose that $\tau<1/(200\cdot r \cdot r!)$ and $\Delta(\mathcal{H},\tau)\leq \varepsilon/(12\cdot r!)$, then there exists a collection $\mathcal{C}$ of subsets (called containers) of $V(\mathcal{H})$ such that
        \begin{enumerate}[(i)]
            \item every independent set of $\mathcal{H}$ is contained in some member of $\mathcal{C}$,
            \item $\log|\mathcal{C}|\leq 1000\cdot r \cdot (r!)^3 \cdot |V(\mathcal{H})| \cdot \tau\cdot \log(1/\varepsilon)\cdot \log(1/\tau)$, and
            \item for every $C\in \mathcal{C}$ the induced subgraph $\mathcal{H}[C]$ has size at most $\varepsilon|\mathcal{H}|$.
        \end{enumerate}
\end{lemma}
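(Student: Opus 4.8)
The plan is to prove Lemma~\ref{hypergraphcontainer} by the standard two-layer argument underlying the container method. The outer layer is an iteration: one repeatedly replaces the current hypergraph by a suitable induced subhypergraph with a constant fraction fewer edges, each time recording a short ``fingerprint'' drawn from the independent set, and stops once the edge count has fallen below $\varepsilon|\mathcal{H}|$. The inner layer, which is the heart of the matter, is a \emph{single-step container lemma}: a deterministic procedure that, given any independent set $I\subseteq V(\mathcal{H})$, produces a set $S=S(I)\subseteq I$ of size of order $\tau|V(\mathcal{H})|$ (up to a factor depending only on $r$) together with a set $C=C(S)\supseteq I$ that depends only on $S$ and satisfies $e(\mathcal{H}[C])\le(1-\gamma_r)\,e(\mathcal{H})$ for some constant $\gamma_r>0$ depending only on $r$.

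For the single step I would use the usual scanning-and-quarantining algorithm. Fix an arbitrary ordering of $V(\mathcal{H})$ and maintain an ``available'' set $A$ (initially all of $V(\mathcal{H})$) and a fingerprint $S$ (initially empty); the output container is $C=S\cup A$. Scanning the vertices in order, when the scan reaches $v$, if $v\in I$ and $v$ has large degree in the hypergraph induced on $A$ with the tuples of $S$ ``pinned'', then add $v$ to $S$ and delete from $A$ every vertex whose presence would, together with the current $S$, account for a large share of the surviving edges. Because $I$ is independent, no edge of $\mathcal{H}$ ever lies entirely inside $S\cup(A\cap I)$, so this quarantining is consistent with keeping $I\subseteq C$ throughout. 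For $r=2$ this is just the transparent ``process the vertices in order of their current degree'' rule. For $r\ge 3$ the degree test and the quarantining must be applied in a nested fashion not only to single vertices but to $2$-sets, $3$-sets, $\dots$, $(r-1)$-sets of already-chosen vertices, which is exactly where the quantities $\Delta_2(\mathcal{H}),\dots,\Delta_r(\mathcal{H})$ and their weighted combination $\Delta(\mathcal{H},\tau)$ enter the analysis.

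Two estimates then finish the single step, and the iteration and the counting bound fall out. First, the fingerprint is short: each vertex (or tuple) added to $S$ is responsible, together with the then-current $S$, for a $\tau$-sized fraction of the remaining edges, so a double-counting argument against $\sum_{i=2}^{r}\Delta_i(\mathcal{H})/\tau^{i-1}$ — precisely the sum appearing in $\Delta(\mathcal{H},\tau)$ — bounds $|S|$ by $O_r(\tau|V(\mathcal{H})|)$, where $O_r$ hides a factor depending only on $r$; the hypotheses $\tau<1/(200\,r\,r!)$ and $\Delta(\mathcal{H},\tau)\le\varepsilon/(12\,r!)$ are what make the constants line up and keep $S$ a genuinely small fraction of $V(\mathcal{H})$. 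Second, the container is sparse: an edge survives in $\mathcal{H}[C]$ only if none of its sub-tuples ever triggered a quarantine, which forces $e(\mathcal{H}[C])\le(1-\gamma_r)\,e(\mathcal{H})$, or already below $\varepsilon|\mathcal{H}|$. Iterating the single step, always with the same $\tau$ and appending the new fingerprints, the edge count drops below $\varepsilon|\mathcal{H}|$ after $O_r(\log(1/\varepsilon))$ rounds, which gives (iii); $I$ lies in the final container by construction, giving (i); and the total fingerprint has size $\ell=O_r\big(|V(\mathcal{H})|\,\tau\,\log(1/\varepsilon)\big)$. Hence the number of containers is at most $\binom{|V(\mathcal{H})|}{\le\ell}$, and the bound $\log\binom{n}{\le\ell}\le\ell\log(en/\ell)$ with $n=|V(\mathcal{H})|$ turns this into $O_r\big(n\,\tau\,\log(1/\varepsilon)\,\log(1/\tau)\big)$, matching the claimed bound $1000\,r\,(r!)^3\,|V(\mathcal{H})|\,\tau\,\log(1/\varepsilon)\,\log(1/\tau)$ once the $r$-dependent constants are tracked.

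The main obstacle is the single-step lemma for $r\ge 3$: designing the nested quarantining so that all the co-degrees $\Delta_2,\dots,\Delta_r$ are controlled simultaneously, verifying that the container genuinely loses a constant fraction of its edges, and — the subtlest point — checking that the fingerprint stays of size $O_r(\tau n)$ despite the recursion over uniformity levels, as well as ensuring that the degree hypothesis (in the form of the bound on $\Delta(\mathcal{H},\tau)$) is robust enough to survive the outer iteration. The graph case $r=2$ is comparatively routine; essentially all of the difficulty lies in the bookkeeping across the levels $i=2,\dots,r$.
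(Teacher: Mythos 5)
This lemma is cited by the paper from Balogh--Morris--Samotij and Saxton--Thomason and is used as a black box; the paper contains no proof of it, so there is nothing internal to compare your argument against.

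On its own terms, your sketch correctly identifies the standard architecture of the container proof: a single-step fingerprint/container construction via a scanning-and-quarantining algorithm, iterated until the edge count falls below $\varepsilon|\mathcal{H}|$, with the container count bounded by the number of possible fingerprints. But what you have written is a roadmap, not a proof of the specific statement, and the genuinely hard content is exactly in the places you flag and then leave unresolved. Concretely: (a) you do not explain how the particular weighted combination $\Delta(\mathcal{H},\tau) = \frac{2^{\binom{r}{2}-1}|V(\mathcal{H})|}{r|\mathcal{H}|}\sum_{i\ge 2}\Delta_i(\mathcal{H})/(2^{\binom{i-1}{2}}\tau^{i-1})$, with its dyadic prefactors, emerges from the nested recursion over uniformity levels --- this is precisely what makes the single-step lemma close for $r\ge 3$, and it is not a cosmetic detail but the mechanism by which one parameter $\tau$ simultaneously discharges all co-degree conditions; (b) you assert that the single step removes a constant fraction of edges, but the constant $\gamma_r$ and its interaction with $\varepsilon$ through $O_r(\log(1/\varepsilon))$ rounds is not established; (c) the iteration needs the hypothesis to remain valid for the induced subhypergraphs produced along the way, which requires care since $\Delta(\mathcal{H},\tau)$ involves $|\mathcal{H}|$ in the denominator and the edge count shrinks at each step; and (d) the explicit constant $1000\,r\,(r!)^3$ in (ii) is not derived. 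These are not minor bookkeeping points: they are the substance of the original papers. If you actually need a proof of this lemma rather than a citation, you must reproduce the full argument from one of the cited sources.
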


\section{Random Tur{\'a}n results in \texorpdfstring{$\mathbb{F}_q^2$}{}}\label{sec:randomturan1}

We aim to prove Theorem~\ref{thm:randomturan1} in this section.

\subsection{Pseudorandomness and balanced supersaturation in \texorpdfstring{$\mathbb{F}_q^2$}{}}

In this subsection, we use the pseudorandomness of the point-line incidence graph of $\mathbb{F}_q^2$ to establish a balanced supersaturation result for collinear triples in large point sets.

Recall that the \textit{adjacency matrix} $A_G$ of an $n$-vertex graph $G$ is the $n \times n$ symmetric matrix where
    \begin{equation*}
        A_G(i,j) := \begin{cases}
               1, & \mbox{if } \{i,j\}\in E(G), \\
               0, & {\rm otherwise.}
            \end{cases}
    \end{equation*} 
We use $\lambda_{1}(G) \geq \dots \geq \lambda_{n}(G)$ to denote the eigenvalues of $A_{G}$. It is well-known that if $G$ is bipartite, then $\lambda_i(G) = -\lambda_{n-i+1}(G)$ holds for all $i \in [n]$. A bipartite graph $G$ with bipartition $V_1 \sqcup V_2$ is said to be \textit{$(\delta_1,\delta_2)$-regular} if the degree $\delta_{G}(v) = \delta_i$ for all $v \in V_i$ and $i \in \{1, 2\}$. A simple linear algebra argument tells us that every $n$-vertex $(\delta_1,\delta_2)$-regular bipartite graph $G$ satisfies $\lambda_1(G) = -\lambda_n(G) = \sqrt{\delta_1\delta_2}$.

The seminal expander mixing lemma~\cite{AC88,Hae95,KS06} relates the edge distribution of a graph with its second largest eigenvalue. One of the key tools in the proof of Theorem~\ref{thm:randomturan1} is the following bipartite extension of the expander mixing lemma (see e.g. Lemma~3.7 in \cite{BLM22}).

\begin{lemma}\label{expander_mixing}
    Suppose that $G$ is a $(\delta_1, \delta_2)$-regular bipartite graph with bipartition $V_1\sqcup V_2$. 
    Then for every $S\subset V_1$ and $T\subset V_2$, the number of edges between $S$ and $T$, denoted by $e(S, T)$, satisfies 
    \begin{equation*}
        \left| e(S,T) - \frac{\delta_2}{|V_1|}|S||T| \right|
        \le \lambda_{2}(G)\sqrt{|S||T|}.
    \end{equation*}
\end{lemma}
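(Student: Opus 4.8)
The plan is to run the standard spectral-decomposition argument adapted to the bipartite biregular setting. Write $n_i := |V_i|$, let $B \in \{0,1\}^{n_1 \times n_2}$ be the bipartite adjacency matrix of $G$, so that $e(S,T) = \mathbf{1}_S^{T} B\, \mathbf{1}_T$ where $\mathbf{1}_S \in \mathbb{R}^{n_1}$ and $\mathbf{1}_T \in \mathbb{R}^{n_2}$ are the indicator vectors of $S$ and $T$, and let $A_G = \left( \begin{smallmatrix} 0 & B \\ B^{T} & 0 \end{smallmatrix} \right)$ be the full adjacency matrix. The $(\delta_1,\delta_2)$-regularity says that the row sums of $B$ equal $\delta_1$ and its column sums equal $\delta_2$, i.e. $B\mathbf{1}_{n_2} = \delta_1 \mathbf{1}_{n_1}$ and $B^{T}\mathbf{1}_{n_1} = \delta_2 \mathbf{1}_{n_2}$; in particular $\mathbf{1}_{n_1}^{T} B\, \mathbf{1}_{n_2} = n_1\delta_1 = n_2\delta_2 = e(G)$.

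First I would split each indicator into its average and a fluctuation orthogonal to the all-ones vector: $\mathbf{1}_S = \frac{|S|}{n_1}\mathbf{1}_{n_1} + x$ with $x \perp \mathbf{1}_{n_1}$, and $\mathbf{1}_T = \frac{|T|}{n_2}\mathbf{1}_{n_2} + y$ with $y \perp \mathbf{1}_{n_2}$. Expanding $e(S,T) = \mathbf{1}_S^{T} B\, \mathbf{1}_T$ into four terms and using the two regularity identities, the two cross terms $\mathbf{1}_{n_1}^{T} B y$ and $x^{T} B\, \mathbf{1}_{n_2}$ vanish (since $B^{T}\mathbf{1}_{n_1}$ is a multiple of $\mathbf{1}_{n_2} \perp y$, and symmetrically), while the leading term is $\frac{|S||T|}{n_1 n_2}\, e(G) = \frac{\delta_2}{n_1}|S||T|$. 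Hence $e(S,T) - \frac{\delta_2}{n_1}|S||T| = x^{T} B y$, and it remains to prove $|x^{T} B y| \le \lambda_2(G)\sqrt{|S||T|}$.

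For this I would pass back to $A_G$. One checks directly that, for a suitable constant $c>0$, the vectors $\psi_{\pm} := (\mathbf{1}_{n_1},\, \pm c\,\mathbf{1}_{n_2}) \in \mathbb{R}^{n}$ (with $n = n_1+n_2$) are eigenvectors of $A_G$ with eigenvalues $\pm\sqrt{\delta_1\delta_2}$, so they can be taken as $\phi_1$ and $\phi_n$ in an orthonormal eigenbasis $\phi_1,\dots,\phi_n$ of $A_G$ with $A_G\phi_i = \lambda_i(G)\phi_i$. Setting $u := (x, 0)$ and $w := (0, y)$ in $\mathbb{R}^n$, we have $x^{T} B y = u^{T} A_G w$, and both $u$ and $w$ are orthogonal to $\psi_+$ and $\psi_-$ (each inner product collapses to $\mathbf{1}_{n_1}^{T} x = 0$ or $\mathbf{1}_{n_2}^{T} y = 0$). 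Writing $u = \sum_{i=2}^{n-1} c_i\phi_i$ and $w = \sum_{i=2}^{n-1} d_i\phi_i$, we get $u^{T} A_G w = \sum_{i=2}^{n-1} \lambda_i c_i d_i$, whence by Cauchy--Schwarz $|u^{T} A_G w| \le \big(\max_{2 \le i \le n-1} |\lambda_i(G)|\big)\,\|u\|\,\|w\|$. The one point where the bipartite structure is essential is the bound $|\lambda_i(G)| \le \lambda_2(G)$ for all $2 \le i \le n-1$: indeed $\lambda_i \le \lambda_2$ trivially, while $-\lambda_i = \lambda_{n-i+1} \le \lambda_2$ by the bipartite symmetry $\lambda_i(G) = -\lambda_{n-i+1}(G)$ together with $2 \le n-i+1 \le n-1$. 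This is also the step worth emphasizing, since a priori $\lambda_n(G) = -\sqrt{\delta_1\delta_2}$ is the dominant eigenvalue in absolute value; it is exactly the orthogonality of $u$ and $w$ to both $\psi_+$ and $\psi_-$ that excises it.

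Finally, $\|u\|^2 = \|x\|^2 = \|\mathbf{1}_S\|^2 - \tfrac{|S|^2}{n_1} = |S| - \tfrac{|S|^2}{n_1} \le |S|$, and likewise $\|w\|^2 = \|y\|^2 \le |T|$, which yields $|x^{T} B y| \le \lambda_2(G)\sqrt{|S||T|}$ and closes the argument. I do not anticipate a genuine obstacle here; the argument is robust even when $G$ is disconnected or $\sqrt{\delta_1\delta_2}$ has higher multiplicity, since $\psi_\pm$ are honest eigenvectors in any case and the inequality $|\lambda_i(G)|\le\lambda_2(G)$ on the relevant index range persists.
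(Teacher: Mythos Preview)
The paper does not actually prove this lemma; it is stated with a citation (``see e.g.\ Lemma~3.7 in \cite{BLM22}'') and used as a black box. Your argument is the standard spectral proof of the bipartite expander mixing lemma and is correct as written: the decomposition of $\mathbf{1}_S,\mathbf{1}_T$ into mean plus fluctuation, the vanishing of the cross terms via biregularity, the identification of the error as $u^T A_G w$ with $u,w$ orthogonal to the explicit top and bottom eigenvectors $\psi_\pm$, and the use of the bipartite spectral symmetry to replace $\max_{2\le i\le n-1}|\lambda_i|$ by $\lambda_2$ are all sound. Your remark about robustness under multiplicity is also accurate, since $\psi_\pm$ are genuine eigenvectors regardless and can be taken as $\phi_1,\phi_n$ in the orthonormal basis.
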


Let $\mathbb{H}_{q}^d$ denote the collection of all hyperplanes in $\mathbb{F}_{q}^d$. We define the \textit{point-hyperplane incidence graph} of $\mathbb{F}_{q}^{d}$ as the bipartite graph $I_{q,d}$ with bipartition $\mathbb{F}_{q}^d \sqcup \mathbb{H}_{q}^d$ such that for every $p \in \mathbb{F}_{q}^d$ and $\pi \in \mathbb{H}_q^d$, the pair $\{p, \pi\}$ is an edge if and only if $p \in \pi$.

\begin{lemma}\label{incidence_eigenvalue}
    The bipartite graph $I_{q,d}$ is $\left(\frac{q^d-1}{q-1}, q^{d-1}\right)$-regular with $\lambda_{2}(I_{q,d}) = q^{\frac{d-1}{2}}$.
\end{lemma}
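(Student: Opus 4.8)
The plan is to verify the two regularity parameters directly and then compute the spectrum using the known relationship between eigenvalues and the combinatorial incidence structure. First I would establish the degrees. Every point $p \in \mathbb{F}_q^d$ lies in exactly $\binom{d}{d-1}_q = \frac{q^d-1}{q-1}$ hyperplanes by Lemma~\ref{flat_counting} (with $k = d-1$), giving the degree on the $\mathbb{F}_q^d$ side. Every hyperplane $\pi$ is an affine subspace of dimension $d-1$, hence contains exactly $q^{d-1}$ points, giving the degree on the $\mathbb{H}_q^d$ side. This shows $I_{q,d}$ is $\left(\frac{q^d-1}{q-1}, q^{d-1}\right)$-regular. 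Since $I_{q,d}$ is a regular bipartite graph in the $(\delta_1,\delta_2)$ sense, the simple linear algebra argument mentioned in the excerpt already gives $\lambda_1(I_{q,d}) = -\lambda_n(I_{q,d}) = \sqrt{\delta_1\delta_2} = \sqrt{\frac{(q^d-1)q^{d-1}}{q-1}}$.

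The heart of the matter is controlling $\lambda_2$. The standard approach is to compute $A A^{\mathsf{T}}$ (or equivalently $A^{\mathsf{T}} A$) where $A$ is the $\mathbb{F}_q^d \times \mathbb{H}_q^d$ incidence matrix, since the nonzero eigenvalues of $A A^{\mathsf{T}}$ are exactly the squares of the nonzero eigenvalues of $I_{q,d}$. The matrix $M := A A^{\mathsf{T}}$ is indexed by pairs of points of $\mathbb{F}_q^d$, and $M(p, p')$ counts the number of hyperplanes through both $p$ and $p'$. For $p = p'$ this is $\frac{q^d-1}{q-1}$; for $p \neq p'$, the hyperplanes containing the line $pp'$ number $\binom{d-1}{d-2}_q = \frac{q^{d-1}-1}{q-1}$ (the hyperplanes containing a fixed line are in bijection with hyperplanes of a quotient space of dimension $d-1$, so we count $(d-2)$-flats in $\mathbb{F}_q^{d-1}$ through a point, i.e. $\binom{d-1}{d-2}_q$). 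Hence
\begin{equation*}
M = \left(\frac{q^d-1}{q-1} - \frac{q^{d-1}-1}{q-1}\right) \mathrm{Id} + \frac{q^{d-1}-1}{q-1} J = q^{d-1}\,\mathrm{Id} + \frac{q^{d-1}-1}{q-1} J,
\end{equation*}
where $J$ is the all-ones matrix of size $q^d \times q^d$. The eigenvalues of $J$ are $q^d$ (once, with the all-ones eigenvector) and $0$ (with multiplicity $q^d - 1$). Therefore $M$ has eigenvalue $q^{d-1} + \frac{q^{d-1}-1}{q-1}\cdot q^d = \frac{(q^d-1)q^{d-1}}{q-1}$ once, matching $\lambda_1^2$, and eigenvalue $q^{d-1}$ with multiplicity $q^d - 1$. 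Consequently every eigenvalue of $I_{q,d}$ other than $\pm\lambda_1$ has absolute value exactly $\sqrt{q^{d-1}} = q^{(d-1)/2}$, so in particular $\lambda_2(I_{q,d}) = q^{(d-1)/2}$.

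The main technical care needed is the flat-counting bookkeeping: precisely, that the number of hyperplanes of $\mathbb{F}_q^d$ through a fixed line equals $\frac{q^{d-1}-1}{q-1}$. I would justify this by passing to the quotient $\mathbb{F}_q^d / \ell$ (where $\ell$ is the direction of the line), a $(d-1)$-dimensional space; hyperplanes containing the line correspond to hyperplanes of this quotient through the image point, of which there are $\binom{d-1}{d-2}_q = \frac{q^{d-1}-1}{q-1}$ by Lemma~\ref{flat_counting}. One should also note $I_{q,d}$ has $|\mathbb{F}_q^d| = |\mathbb{H}_q^d| = \frac{q^d-1}{q-1} \cdot q^{d-1-(d-1)} \cdot \ldots$; in fact equality of the two sides' cardinalities follows from $\frac{q^d-1}{q-1} \cdot q^d = q^{d-1} \cdot |\mathbb{H}_q^d|$ together with Lemma~\ref{flat_counting} giving $|\mathbb{H}_q^d| = q \binom{d}{d-1}_q = q \cdot \frac{q^d-1}{q-1}$. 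Beyond this bookkeeping the argument is a routine spectral computation, so I do not anticipate a serious obstacle — the only place to be careful is ensuring the off-diagonal entry of $A A^{\mathsf{T}}$ genuinely depends only on whether $p = p'$ (it does, since any two distinct points determine a unique line and the count of hyperplanes through a line is independent of the line).
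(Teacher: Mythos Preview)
Your argument is correct and takes a genuinely different route from the paper's. The paper works with the full adjacency matrix $A$ of $I_{q,d}$ and establishes the cubic identity
\[
A^3 = (\delta_1 - 1)q^{d-2}\begin{pmatrix} 0 & J \\ J^{\mathsf T} & 0 \end{pmatrix} + \delta_2 A
\]
by counting length-$3$ walks (which requires knowing that two distinct hyperplanes meet in a $(d-2)$-flat or not at all); this identity forces every eigenvalue other than $\pm\sqrt{\delta_1\delta_2}$ to satisfy $x^3 = \delta_2 x$. You instead compute $M = AA^{\mathsf T}$ on the point side, observe that its off-diagonal entries are constant because any two distinct points lie on a unique line and hence on exactly $\frac{q^{d-1}-1}{q-1}$ hyperplanes, and read off the spectrum from $M = q^{d-1}\,\mathrm{Id} + \frac{q^{d-1}-1}{q-1}J$. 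Your approach is the more classical one for incidence structures and has the advantage of yielding the full spectrum with multiplicities in one stroke; the paper's $A^3$ identity avoids splitting the adjacency matrix into rectangular blocks but requires a slightly less obvious combinatorial count. Two small clean-ups: your aside claiming ``equality of the two sides' cardinalities'' is garbled (in fact $|\mathbb{H}_q^d| = q\cdot\frac{q^d-1}{q-1} \neq q^d$ for $d\ge 2$), but nothing in your proof uses it; and your sentence ``every eigenvalue of $I_{q,d}$ other than $\pm\lambda_1$ has absolute value exactly $q^{(d-1)/2}$'' should read ``every \emph{nonzero} eigenvalue'', since the unequal part sizes force some zero eigenvalues --- though this does not affect the conclusion $\lambda_2 = q^{(d-1)/2}$.
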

\begin{proof}
    According to Lemma~\ref{flat_counting}, $|\mathbb{H}_{q}^d| = \frac{q(q^{d}-1)}{q-1}$, and $I_{q,d}$ is $(\delta_1, \delta_2)$-regular with $\delta_1 = \frac{q^{d}-1}{q-1}$ and $\delta_2 = q^{d-1}$. Hence it suffices to show that $\lambda_{2}(I_{q,d}) = q^{\frac{d-1}{2}}$. To prove this, we will show 
    \begin{equation}\label{eq:incidence_eigenvalue}
        A^3 
        = (\delta_1-1)q^{d-2}
        \left[ 
        \begin{array}{c|c} 
          0 & J \\ 
          \hline 
          J^t & 0 
        \end{array} 
        \right]
        +
        \delta_2 A,
    \end{equation}
    where $A$ is the adjacency matrix of $I_{q,d}$ and $J$ is the $|\mathbb{F}_q^d| \times |\mathbb{H}_q^d|$ all-one matrix. Given this identity, it follows from a simple linear algebra argument that every eigenvalue of $A$ except for $\pm \sqrt{\delta_1\delta_2}$ satisfies the equation $x^3 = \delta_2 x$. In particular, $\lambda_{2}(I_{q,d}) = \sqrt{\delta_2} = q^{\frac{d-1}{2}}$ as claimed.
    
    To prove \eqref{eq:incidence_eigenvalue}, observe that the $(u,v)$-entry of $A^3$ equals the number of length-$3$ walks from $u$ to $v$. Therefore, $A^3(u,v) = 0$ if $u,v$ are both in $\mathbb{F}_q^d$ or $\mathbb{H}_q^d$. Since $A$ is a symmetric matrix, we can assume that $u\in \mathbb{F}_q^d$ and $v\in \mathbb{H}_q^d$. Let $uv'u'v$ be a length-$3$ walk from $u$ to $v$ in $I_{q,d}$. If $\{u,v\} \not \in E(I_{q,d})$, then there are $\delta_1-1$ choices for $v'$, and for each chosen $v'$ there are $q^{d-2}$ choices for $u'$ (since the intersection of two hyperplanes is either empty or a $(d-2)$-flat). If $\{u,v\}\in E(I_{q,d})$, then in addition to the above $(\delta_1-1)q^{d-2}$ choices, we can also choose $v' = v$ and in this case there are $\delta_2$ more choices for $u'$ (i.e. the neighbors of $v$). Hence we conclude the proof.
\end{proof}

Using Lemmas~\ref{expander_mixing} and~\ref{incidence_eigenvalue}, we obtain the following crucial property of the point-hyperplane incidence relation of $\mathbb{F}_{q}^{d}$.
\begin{lemma}\label{middle_lines}
    For any point set $P$ in $\mathbb{F}_{q}^{d}$, we have 
    \begin{equation*}
        \left|\left\{ \pi \in \mathbb{H}_{q}^d :~ |\pi \cap P| \not\in \left[\frac{|P|}{2q}, \frac{2|P|}{q}\right]  \right\}\right| \le \frac{8q^{d+1}}{|P|}.
    \end{equation*}
\end{lemma}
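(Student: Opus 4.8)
The idea is to apply the expander mixing lemma (Lemma~\ref{expander_mixing}) to the point-hyperplane incidence graph $I_{q,d}$, whose relevant parameters we know from Lemma~\ref{incidence_eigenvalue}: it is $(\delta_1,\delta_2)$-regular with $\delta_1 = \frac{q^d-1}{q-1}$, $\delta_2 = q^{d-1}$, $|V_1| = |\mathbb{F}_q^d| = q^d$, and $\lambda_2(I_{q,d}) = q^{(d-1)/2}$. Let me set $n = |P|$. The quantity $e(P, T)$ for a set $T$ of hyperplanes counts incidences, i.e. $\sum_{\pi \in T} |\pi \cap P|$. The "expected" value is $\frac{\delta_2}{|V_1|}|P||T| = \frac{q^{d-1}}{q^d} n |T| = \frac{n|T|}{q}$. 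So a hyperplane $\pi$ is "atypical" exactly when $|\pi \cap P|$ deviates from its average $n/q$ by more than a factor of $2$.

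**The two bad families.** First I would handle the hyperplanes that are \emph{too rich}: let $T^+ = \{\pi : |\pi \cap P| > 2n/q\}$. Applying Lemma~\ref{expander_mixing} with $S = P$, $T = T^+$ gives
\begin{equation*}
    e(P, T^+) \le \frac{n|T^+|}{q} + q^{\frac{d-1}{2}}\sqrt{n|T^+|}.
\end{equation*}
On the other hand, by definition $e(P,T^+) = \sum_{\pi \in T^+}|\pi\cap P| > \frac{2n}{q}|T^+|$. Combining the two bounds, $\frac{2n}{q}|T^+| < \frac{n}{q}|T^+| + q^{(d-1)/2}\sqrt{n|T^+|}$, hence $\frac{n}{q}|T^+| < q^{(d-1)/2}\sqrt{n|T^+|}$, which rearranges to $|T^+| < \frac{q^{d+1}}{n}$. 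Symmetrically, for the \emph{too poor} hyperplanes $T^- = \{\pi : |\pi\cap P| < n/(2q)\}$, I would bound $e(P,T^-)$ from below by the expander mixing lemma, $e(P,T^-) \ge \frac{n|T^-|}{q} - q^{(d-1)/2}\sqrt{n|T^-|}$, and from above by $\frac{n}{2q}|T^-|$, giving $\frac{n}{2q}|T^-| < q^{(d-1)/2}\sqrt{n|T^-|}$, so $|T^-| < \frac{4q^{d+1}}{n}$. Adding the two estimates yields $|T^+| + |T^-| < \frac{5q^{d+1}}{n} \le \frac{8q^{d+1}}{n}$, which is the claimed bound (with room to spare, absorbing the $\frac{q^d-1}{q-1}$-versus-$q^{d-1}$ discrepancies and any off-by-one issues into the constant $8$).

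**Anticipated obstacles.** There is essentially no deep obstacle here; the proof is a routine two-sided application of expander mixing. The one point requiring a little care is that Lemma~\ref{expander_mixing} has $|S|,|T|$ under the square root, so the inequalities I get are of the form $c\sqrt{|T|} < C$ after dividing through by $\sqrt{|T|}$ — I should note that if $|T^{\pm}| = 0$ there is nothing to prove, so dividing by $\sqrt{|T^\pm|}$ is legitimate. A second minor point: the statement uses the closed interval $[\frac{n}{2q}, \frac{2n}{q}]$, so the bad set is $T^+ \cup T^-$ with strict inequalities as above, exactly matching. Finally, one should double-check that the constant truly comes out $\le 8$; the crude bound $|T^+|+|T^-| < 5q^{d+1}/n$ above already suffices, and any slack needed for integer rounding or for replacing $\frac{n|T|}{q}$ by the version with $\delta_2/|V_1|$ exactly equal to $q^{-1}$ (which it is, since $\delta_2/|V_1| = q^{d-1}/q^d = 1/q$ precisely) is comfortably available.
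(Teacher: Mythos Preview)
Your proposal is correct and follows essentially the same approach as the paper: split the atypical hyperplanes into the too-rich set $T^+$ and the too-poor set $T^-$, apply the expander mixing lemma (Lemma~\ref{expander_mixing}) with the parameters from Lemma~\ref{incidence_eigenvalue} to each, and sum the resulting bounds. Your constants are in fact slightly sharper (you obtain $|T^+|+|T^-| < 5q^{d+1}/|P|$ whereas the paper records $4+4=8$), but the argument is the same.
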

\begin{proof}
    Let $\mathbb{H}^{<} = \left\{ \pi \in \mathbb{H}_{q}^d :~ |\pi \cap P| < {|P|}/{(2q)}\right\}$ and $\mathbb{H}^{>} = \left\{ \pi \in \mathbb{H}_{q}^d :~ |\pi \cap P| > {2|P|}/{q}\right\}$. Applying Lemmas~\ref{expander_mixing} and~\ref{incidence_eigenvalue} to $I_{q,d}$ with $S = P$ and $T = \mathbb{H}^{<}$, we obtain \begin{equation*}
        e\left(P, \mathbb{H}^{<}\right) 
        \ge \frac{\delta_2}{|\mathbb{F}_{q}^d|}|P||\mathbb{H}^{<}| - \lambda_2(I_{q,d})\sqrt{|P||\mathbb{H}^{<}|} = \frac{|P||\mathbb{H}^{<}|}{q} - q^{\frac{d-1}{2}}\sqrt{|P||\mathbb{H}^{<}|}.
    \end{equation*}
    Notice that $e\left(P, \mathbb{H}^{<}\right) \leq |\mathbb{H}^{<}||P|/{(2q)}$. Combining these two inequalities we can solve \begin{equation*}
        \frac{|P||\mathbb{H}^{<}|}{2q} 
        \le q^{\frac{d-1}{2}}\sqrt{|P||\mathbb{H}^{<}|},
    \end{equation*} which implies that $|\mathbb{H}^{<}| \le {4q^{d+1}}/{|P|}$. Similarly, we can show that $|\mathbb{H}^{>}| \le {4q^{d+1}}/{|P|}$, hence we conclude the proof.
\end{proof}

Our balanced supersaturation result for collinear triples is a corollary of Lemma~\ref{middle_lines}. 
\begin{lemma}\label{supersaturation_Fq2}
    For any point set $P$ in $\mathbb{F}_q^2$, there is a collection $\mathcal{S}$ of collinear triples in $P$ such that
    \begin{enumerate}[(i)]
        \item $|\mathcal{S}| \ge \left(q(q+1) - 8q^3/|P|\right)\binom{|P|/(2q)}{3}$, 
        \item every point $v\in P$ is contained in at most $(q+1) \binom{2|P|/q}{2}$ elements in $\mathcal{S}$, and 
        \item every pair $\{u,v\}\subset P$ is contained in at most $2|P|/q$ elements in $\mathcal{S}$.
    \end{enumerate}
\end{lemma}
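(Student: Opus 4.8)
The plan is to derive Lemma~\ref{supersaturation_Fq2} directly from Lemma~\ref{middle_lines} applied in the plane ($d=2$), where hyperplanes are exactly lines. First I would record the basic incidence structure of $\mathbb{F}_q^2$: there are $q(q+1)$ lines in total, every line contains exactly $q$ points, every point lies on exactly $q+1$ lines, and every pair of distinct points lies on exactly one line. Call a line \emph{good} (with respect to $P$) if $|\pi\cap P|\in[|P|/(2q),\,2|P|/q]$ and \emph{bad} otherwise; by Lemma~\ref{middle_lines} there are at most $8q^3/|P|$ bad lines, hence at least $q(q+1)-8q^3/|P|$ good lines. I would then let $\mathcal{S}$ be the collection of all collinear triples $\{x,y,z\}\subset P$ that lie on a good line, i.e. for each good line $\pi$ take all $\binom{|\pi\cap P|}{3}$ triples inside $\pi\cap P$.

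Next I would verify the three properties in turn. For (i): each good line contributes $\binom{|\pi\cap P|}{3}\ge\binom{|P|/(2q)}{3}$ triples (using $|\pi\cap P|\ge |P|/(2q)$ and monotonicity of $\binom{\cdot}{3}$), and distinct lines contribute disjoint triple sets because any triple determines a unique line through it; summing over the at least $q(q+1)-8q^3/|P|$ good lines gives the claimed lower bound on $|\mathcal{S}|$. For (ii): a point $v\in P$ lies on exactly $q+1$ lines, at most $q+1$ of which are good; on each such good line $\pi$ the number of triples of $\mathcal{S}$ through $v$ is $\binom{|\pi\cap P|-1}{2}\le\binom{2|P|/q}{2}$ using $|\pi\cap P|\le 2|P|/q$; hence $v$ is in at most $(q+1)\binom{2|P|/q}{2}$ elements of $\mathcal{S}$. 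For (iii): a pair $\{u,v\}\subset P$ lies on exactly one line $\pi$; if $\pi$ is bad there are no triples of $\mathcal{S}$ through $\{u,v\}$, and if $\pi$ is good then the triples through $\{u,v\}$ correspond to the choices of a third point in $(\pi\cap P)\setminus\{u,v\}$, of which there are $|\pi\cap P|-2\le 2|P|/q$.

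I do not anticipate a serious obstacle here; the only mild care needed is making sure the monotonicity and the binomial-coefficient bounds are invoked correctly (e.g.\ treating $|P|/(2q)$ as possibly non-integral — which is fine since $\binom{x}{3}$ can be read as the polynomial $x(x-1)(x-2)/6$, or one rounds down harmlessly as per the paper's convention on floors), and that the disjointness argument for (i) genuinely uses the fact that three collinear points in $\mathbb{F}_q^2$ determine their line uniquely. The main content of the lemma is really Lemma~\ref{middle_lines}, which has already been established via the expander mixing lemma; this statement is the clean combinatorial repackaging of it for the container argument to follow.
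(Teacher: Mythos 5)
Your proposal is correct and matches the paper's proof essentially verbatim: both define $\mathcal{S}$ as the union of all triples inside lines whose intersection with $P$ has size in $[|P|/(2q),\,2|P|/q]$, use Lemma~\ref{middle_lines} to bound the number of excluded lines, and verify (i)--(iii) by the same line-counting arguments. There is nothing to add.
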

\begin{proof}
   Let
    \begin{equation*}
        L = \left\{ \ell\in \mathbb{H}_q^2 :~ |\ell\cap P| \in \left[\frac{|P|}{2q}, \frac{2|P|}{q}\right]  \right\} 
        \quad\text{and}\quad
        \mathcal{S} = \bigcup_{\ell\in L}\binom{\ell\cap P}{3}. 
    \end{equation*}
    According to Lemmas~\ref{flat_counting} and~\ref{middle_lines}, we have $|L|\ge q(q+1)-8q^3/|P|$. By the definition of $L$, every line $\ell \in L$ contains at least $|P|/(2q)$ points in $P$, so we obtain 
    \begin{equation*}
        |\mathcal{S}|\ge \left(q(q+1)-\frac{8q^3}{|P|}\right)\binom{|P|/(2q)}{3}, 
    \end{equation*} 
    and this concludes the proof of (i). 

    To prove (ii), notice that every point $v \in P$ is contained in at most $q+1$ lines in $L$, and moreover, there are at most $\binom{2|P|/q -1}{2}$ collinear triples in $\mathcal{S}$ containing $v$ in each line $\ell \in L$. So $v$ is contained in at most $(q+1) \binom{2|P|/q}{2}$ elements in $\mathcal{S}$.
    
    Finally, let $\ell_{uv}$ be the unique line in $\mathbb{F}_{q}^{2}$ determined by a pair $\{u,v\} \subset P$. If $\ell_{uv} \not\in L$, then no element in $\mathcal{S}$ contains $\{u,v\}$. If $\ell_{uv} \in L$, then by the definition of $L$, there are at most $2|P|/q -2$ elements in $\mathcal{S}$ containing $\{u,v\}$. This proves (iii).
\end{proof}

\subsection{The first container theorem}
In this subsection, we prove the following container theorem for general position sets in $\mathbb{F}^2_q$ using Lemma~\ref{supersaturation_Fq2}.

\begin{theorem}\label{thm:container1}
    There exists an absolute constant $c>0$ such that for sufficiently large prime power $q$, we have a collection $\mathcal{C}$ of point sets (called containers) in $\mathbb{F}^2_q$ satisfying
    \begin{enumerate}[(i)]
        \item every general position set of $\mathbb{F}^2_q$ is contained in some member of $\mathcal{C}$,
        \item $|C|\le 9q$ for all $C\in \mathcal{C}$, and 
        \item $|\mathcal{C}|\le \exp \l(c \cdot q^{1/2}(\log q)^2\r)$.
    \end{enumerate}
\end{theorem}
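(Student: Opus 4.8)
The plan is to apply the hypergraph container lemma (Lemma~\ref{hypergraphcontainer}) a bounded number of times to the $3$-graph $\mathcal{H}$ on vertex set $\mathbb{F}_q^2$ whose edges are the collinear triples. Independent sets of $\mathcal{H}$ are exactly the general position sets, so a single application already produces containers covering every general position set; the issue is that one application only shrinks a set $P$ with $|P|$ large down to one with $\le \varepsilon|\mathcal{H}|$ collinear triples, not down to size $O(q)$. So I would iterate: starting from the trivial container $\mathbb{F}_q^2$, repeatedly apply the container step to each current container $C$ of size $|C| \ge 9q$ to replace it by a bounded number of sub-containers, each strictly (indeed geometrically) smaller, until all containers have size $< 9q$. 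The balanced supersaturation of Lemma~\ref{supersaturation_Fq2} is exactly what makes each step valid for all $P$ in the relevant size range.

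The key computation is to choose $\tau$ and $\varepsilon$ for a container $C$ with $n := |C| \in [9q, q^2]$. Using the collection $\mathcal{S}$ from Lemma~\ref{supersaturation_Fq2} as the edge set of an auxiliary $3$-graph on $C$ (note $n \ge 9q$ forces $8q^3/n \le q^2 \le q(q+1)$, so $|\mathcal{S}| = \Omega(q^2 (n/q)^3) = \Omega(n^3/q)$), we have the bounds $|\mathcal{S}| = \Omega(n^3/q)$, $\Delta_2(\mathcal{S}) = O(n/q)$, and $\Delta_3(\mathcal{S}) = 1$, together with $\Delta_1$-type control $\Delta(v) = O(q \cdot (n/q)^2) = O(n^2/q)$ which will be used to guarantee each vertex lies in few edges (this is automatically folded into the $\Delta_i$ estimates since $r=3$). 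Plugging into the definition of $\Delta(\mathcal{H},\tau)$ with $r=3$ gives
\begin{equation*}
    \Delta(\mathcal{S},\tau) = O\!\l( \frac{n}{|\mathcal{S}|}\l( \frac{\Delta_2(\mathcal{S})}{\tau} + \frac{\Delta_3(\mathcal{S})}{\tau^2} \r) \r) = O\!\l( \frac{q}{n^2}\cdot \frac{n/q}{\tau} + \frac{q}{n^2}\cdot\frac1{\tau^2} \r) = O\!\l( \frac1{n\tau} + \frac{q}{n^2\tau^2} \r).
\end{equation*}
Choosing $\tau = \Theta(q^{1/2}/n)$ (which is $o(1)$ and satisfies $\tau < 1/(200\cdot 3\cdot 3!)$ once $n \gg q^{1/2}$, and $n \ge 9q \gg q^{1/2}$) makes both terms $O(q^{-1/2}) = o(1)$, so we may take $\varepsilon$ to be a small constant and the hypotheses of Lemma~\ref{hypergraphcontainer} hold. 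Then part (ii) of the container lemma yields, for the single step applied to $C$, at most $\exp\l( O(n\tau \log(1/\tau)) \r) = \exp\l( O(q^{1/2}\log q) \r)$ sub-containers, and part (iii) gives that each sub-container $C'$ spans at most $\varepsilon|\mathcal{S}|$ collinear triples, which by the supersaturation lower bound forces $|C'| \le \beta n$ for some absolute constant $\beta < 1$ (solving $c_1 |C'|^3/q \le \varepsilon c_2 n^3/q$; here one must be a little careful to first pass to the regime $|C'| \ge 9q$, below which we simply stop).

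Iterating this, after $t = O(\log n / \log(1/\beta)) = O(\log q)$ rounds every container has size $< 9q$, giving part (ii). Part (i) is immediate since at each round every general position set contained in $C$ is contained in one of its sub-containers. For part (iii), the total number of containers is at most the product over the $O(\log q)$ rounds of the per-step branching factor $\exp\l(O(q^{1/2}\log q)\r)$, hence
\begin{equation*}
    |\mathcal{C}| \le \exp\l( O(\log q) \cdot O(q^{1/2}\log q) \r) = \exp\l( O(q^{1/2}(\log q)^2) \r),
\end{equation*}
as desired. The main obstacle, and the place requiring the most care, is making the iteration rigorous uniformly across the whole size range $[9q, q^2]$: one must check that the choice $\tau = \Theta(q^{1/2}/n)$ keeps satisfying the hypotheses of Lemma~\ref{hypergraphcontainer} for every $n$ in this range (in particular that $\tau$ stays below the absolute threshold and that $\Delta(\mathcal{S},\tau) = o(1)$ throughout), that the supersaturation bounds of Lemma~\ref{supersaturation_Fq2} are applied with the correct constants so the "$\varepsilon|\mathcal{S}|$ edges $\Rightarrow$ size $\le \beta n$" implication genuinely has $\beta < 1$, and that stopping the recursion at the threshold $9q$ (below which supersaturation no longer gives a useful lower bound on $|\mathcal{S}|$) does not break property (i). The branching-factor bound $\exp(O(q^{1/2}\log q))$ per step is what ultimately produces the $q^{1/2}(\log q)^2$ in the exponent after the $O(\log q)$ rounds.
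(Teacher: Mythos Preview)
Your approach is essentially the same as the paper's: iterate the container lemma on the balanced triple system $\mathcal{S}_C$ from Lemma~\ref{supersaturation_Fq2}, with $\tau_C = \Theta(q^{1/2}/|C|)$ and a fixed small $\varepsilon$, for $O(\log q)$ rounds. The computations of $\Delta(\mathcal{S},\tau)$, the per-step branching factor $\exp(O(q^{1/2}\log q))$, and the final count all match.

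There is, however, a genuine gap in your shrinkage step. You argue that $|\mathcal{S}_C[C']|\le \varepsilon|\mathcal{S}_C|$ combined with the supersaturation lower bound ``$c_1|C'|^3/q$'' forces $|C'|\le \beta n$. But the supersaturation lemma applied to $C'$ produces a \emph{new} collection $\mathcal{S}_{C'}$ of triples (on lines that are ``middle'' with respect to $C'$), and there is no reason these lie in $\mathcal{S}_C$; a line can be middle for $C'$ while being far from middle for $C$. So $|\mathcal{S}_C[C']|$ small does not by itself bound $|C'|$ via supersaturation. The paper fixes this using precisely the $\Delta_1$ bound you already recorded: from $\sum_{v\in C\setminus C'} d_{\mathcal{S}_C}(v)\ge (1-\varepsilon)|\mathcal{S}_C|$ and $\Delta_1(\mathcal{S}_C)=O(n^2/q)$ one gets
\[
|C\setminus C'|\ \ge\ \frac{(1-\varepsilon)|\mathcal{S}_C|}{\Delta_1(\mathcal{S}_C)}\ =\ \Omega\!\left(\frac{n^3/q}{n^2/q}\right)\ =\ \Omega(n),
\]
hence $|C'|\le \beta n$ for an absolute $\beta<1$. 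Replace your supersaturation-comparison step with this $\Delta_1$ argument and the proof goes through exactly as in the paper.
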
 

Our proof will be an iterative application of Lemmas~\ref{hypergraphcontainer} and~\ref{supersaturation_Fq2}. We start with the container collection $\mathcal{C}$ consisting of the space $\mathbb{F}_q^2$ alone. We repeatedly apply Lemma~\ref{supersaturation_Fq2} with each large member $C \in \mathcal{C}$ to obtain a triple system $\mathcal{S}_C$, then apply Lemma~\ref{hypergraphcontainer} with $\mathcal{S}_C$ to obtain a new collection of containers $\mathcal{C}_C$, and replace $C$ in $\mathcal{C}$ with all members in $\mathcal{C}_C$. This iteration process ends when every member in $\mathcal{C}$ becomes small, and we can verify that the final collection satisfies the claimed properties.

\begin{proof}[Proof of Theorem~\ref{thm:container1}]
    We set $\mathcal{C}_{0} = \{\mathbb{F}_q^2\}$ and $\varepsilon = 1/e$. Suppose that $\mathcal{C}_j$ has been defined for some integer $j \ge 0$. Let
        \begin{equation*}
            \mathcal{C}_{j}^{>} = \left\{C\in \mathcal{C}_j :~ |C|>9q\right\}
            \quad\text{and}\quad
            \mathcal{C}_{j}^{\le} = \left\{C\in \mathcal{C}_j :~ |C|\le 9q\right\}. 
        \end{equation*}
    If $\mathcal{C}_{j}^{>} = \emptyset$, then we let $\mathcal{C} = \mathcal{C}_j$. Otherwise, for each $C\in \mathcal{C}_{j}^{>}$ we set
    \begin{equation*}
        k_C = {|C|}/{q} 
        \quad\text{and}\quad
        \tau_{C} = 10^8 \cdot k_C^{-1} q^{-1/2}. 
    \end{equation*} We can apply Lemma~\ref{supersaturation_Fq2} with $P=C$ and obtain a collection $\mathcal{S}_{C}$ of collinear triples in $C$ satisfying
    \begin{itemize}
        \item $|\mathcal{S}_{C}| \ge \left(q(q+1) - 8q^3/|C|\right)\binom{|C|/(2q)}{3}\ge 10^{-3} k_C^3 q^2 $, 
        \item $\Delta_1(\mathcal{S}_{C}) \le (q+1) \binom{2|C|/q}{2} \le 4 k_C^2 q$, and 
        \item $\Delta_2(\mathcal{S}_{C}) \le 2|P|/q = 2k_C$.
    \end{itemize} Here, the fact $|C| > 9q$ is used in the first property. As a consequence, we can compute
    \begin{align*}
        \Delta(\mathcal{S}_C,\tau_C)
            &= \frac{2^{\binom{3}{2}-1}|C|}{3|\mathcal{S}_C|}
            \left(\frac{\Delta_2(\mathcal{S}_C)}{\tau_C} + \frac{\Delta_3(\mathcal{S}_C)}{2\tau_C^2}\right) \\
            &\le \frac{4k_C q}{3\cdot 10^{-3} k_C^3 q^2}\left(\frac{2k_C}{10^8 k_C^{-1} q^{-1/2}}+\frac{1}{2\left(10^8k_C^{-1}q^{-1/2}\right)^2}\right)\\
            &= \frac{8}{3\cdot 10^{5} q^{1/2}} + \frac{4}{6\cdot 10^{13}}
                \le \frac{\varepsilon}{12\cdot 3!}. 
    \end{align*}
    Additionally, since $q$ is sufficiently large, we have $\tau_{C} \le \frac{1}{200\cdot 3 \cdot 3!}$. It follows from Lemma~\ref{hypergraphcontainer}, applied to $\mathcal{S}_C$, that there exists a collection $\mathcal{C}_{C}$ of subsets of $C$ such that 
    \begin{enumerate}[(a)]
        \item every independent set of $\mathcal{S}_C$ is contained in some member of $\mathcal{C}_C$, 
        \item the size of $\mathcal{C}_{C}$ satisfies 
                \begin{align*}
                    |\mathcal{C}_C|
                    &\le \exp\left( 1000\cdot 3 \cdot (3!)^3 \cdot |C| \cdot \tau_{C} \cdot \log(1/\varepsilon)\cdot \log(1/\tau_C)\right)\\
                    &\le \exp\left( 10^{18} \cdot k_{C} q \cdot k_{C}^{-1} q^{-1/2}\cdot \log e \cdot\log \left(k_C q^{1/2}/10^8\right)\right)\\ 
                    &\le \exp\left( 10^{18} \cdot q^{1/2} \log q\right),~\text{and}
                \end{align*}
        \item for every $C'\in \mathcal{C}_{C}$ the induced subgraph $\mathcal{S}_{C}[C']$ has size at most $\varepsilon|\mathcal{S}_{C}|$. 
    \end{enumerate} Having constructed $\mathcal{C}_C$ for each $C \in \mathcal{C}_{j}^{>}$, we define
    \begin{equation*}
        \mathcal{C}_{j+1} = \mathcal{C}_{j}^{\le}~\cup \bigcup_{C \in \mathcal{C}_{j}^{>}} \mathcal{C}_{C}, 
    \end{equation*} and repeat this process to $\mathcal{C}_{j+1}$. Recall that our iteration stops with $\mathcal{C} = \mathcal{C}_{j}$ when $\mathcal{C}_{j}^{>} = \emptyset$.

    We prove that $\mathcal{C}$ satisfies the claimed properties. First, using statement~(a) above, we can argue by induction on $j$ that every general position set of $\mathbb{F}_{q}^2$ is contained in some member of $\mathcal{C}_j$. Indeed, a general position set is either contained in some member of $\mathcal{C}_{j}^{\le}$ or is an independent set in $\mathcal{S}_{C}$ for some $C \in \mathcal{C}_{j}^{>}$. Also, it is obvious from our construction that $|C| \leq 9q$ for all $C \in \mathcal{C}$.

    It suffices for us to bound the size of the output $\mathcal{C}$. We claim that the iterative process above must stop after at most $x = \frac{\log(9/q)}{\log(1-10^{-4})}$ steps. Indeed, statement~(c) above implies that for every $C'\in \mathcal{C}_{C}$ we have 
        \begin{equation*}
            \sum_{v\in C \setminus C'}d_{\mathcal{S}_{C}}(v) \ge (1-\varepsilon)|\mathcal{S}_{C}|.
        \end{equation*}
    Combining this with the properties of $\mathcal{S}_C$ concluded above, we have 
    \begin{equation*}
        |C\setminus C'|
        \ge \frac{(1-\varepsilon)|\mathcal{S}_{C}|}{\Delta_1(\mathcal{S}_{C})}
        \ge \frac{k_C q}{10^4}
        = \frac{|C|}{10^4}. 
    \end{equation*}
    Hence we have $|C'| \le \left(1-10^{-4}\right)|C|$. This implies that every container would have size at most $\left(1-10^{-4}\right)^{x}q^2 \le 9q$ after $x$ steps, so the process should not continue beyond. Combining this fact with statement~(b) above, we obtain
    \begin{equation*}
        |\mathcal{C}|
        \le \left(\exp\left(10^{18} \cdot q^{1/2} \log q\right)\right)^x
        \le \exp\left(\frac{10^{18}}{\log((1-10^{-4})^{-1})} \cdot q^{1/2} (\log q)^2 \right), 
    \end{equation*} and this concludes our proof.
\end{proof}

\subsection{Proof of Theorem~\ref{thm:randomturan1}}

We finish the proof of Theorem~\ref{thm:randomturan1} in this subsection. First, we prove the following upper bounds for $\alpha(\mathbb{F}_q^2,p)$.

\begin{proposition}\label{randomturan1_upper}
There exists an absolute constant $c>0$ such that a.a.s.
\begin{equation*}
\alpha(\mathbb{F}^2_q,p)\le
\l\{
\begin{aligned}
& cq^{1/2}(\log q)^2,~~&0\leq p\leq q^{-1/2}(\log q)^2,\\
& cpq,~~&q^{-1/2}(\log q)^2\le p\le 1.
\end{aligned}
\r.
\end{equation*}Furthermore, we have $\alpha(\mathbb{F}^2_q,p)\le (1+o(1))pq^2$ a.a.s. for $p \geq q^{-2}\log q$.
\end{proposition}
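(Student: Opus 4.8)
I would prove the three bounds separately. The estimate $\alpha(\mathbb{F}^2_q,p)\le (1+o(1))pq^2$ for $p \ge q^{-2}\log q$ is the easy one: any general position set contained in $\mathbf{S}_p$ has size at most $|\mathbf{S}_p|$, and $|\mathbf{S}_p| \sim \mathrm{Bin}(q^2,p)$ has mean $pq^2 \ge \log q \to \infty$. Applying Chernoff's bound with a deviation $\delta = \delta(q)$ that tends to $0$ slowly enough that $\delta^2 pq^2 \to \infty$ (for instance $\delta = (pq^2)^{-1/4}$) gives $\Pr\left[|\mathbf{S}_p| > (1+\delta)pq^2\right] \le \exp(-\delta^2 pq^2/3) \to 0$, so a.a.s. $\alpha(\mathbb{F}^2_q,p) \le |\mathbf{S}_p| \le (1+o(1))pq^2$.

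For the remaining two bounds the plan is the standard container-method deduction. Let $\mathcal{C}$ be the family from Theorem~\ref{thm:container1}: every $C \in \mathcal{C}$ satisfies $|C|\le 9q$, $|\mathcal{C}| \le \exp(c_0 q^{1/2}(\log q)^2)$ for an absolute constant $c_0$, and every general position set of $\mathbb{F}^2_q$ lies inside some $C \in \mathcal{C}$. Then any general position set contained in $\mathbf{S}_p$ is contained in $C \cap \mathbf{S}_p$ for some $C$, so $\alpha(\mathbb{F}^2_q,p) \le \max_{C \in \mathcal{C}} |C \cap \mathbf{S}_p|$. Setting $M = \max\{pq,\ q^{1/2}(\log q)^2\}$, one has $M = q^{1/2}(\log q)^2$ on the first range and $M = pq$ on the second, so it suffices to show $\alpha(\mathbb{F}^2_q,p) < cM$ a.a.s. for one absolute constant $c$. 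For fixed $C$, the variable $|C \cap \mathbf{S}_p| \sim \mathrm{Bin}(|C|,p)$ has mean $p|C| \le 9pq \le 9M$, so the elementary tail bound $\Pr[\mathrm{Bin}(n,p)\ge t] \le (enp/t)^t$ with $t = cM$ gives $\Pr\left[|C\cap\mathbf{S}_p| \ge cM\right] \le (9e/c)^{cM} \le \exp(-cM)$ once $c$ is a large enough absolute constant. A union bound over $\mathcal{C}$ then yields
\begin{equation*}
    \Pr\left[\alpha(\mathbb{F}^2_q,p) \ge cM\right] \le |\mathcal{C}|\cdot\exp(-cM) \le \exp\left(c_0 q^{1/2}(\log q)^2 - cM\right),
\end{equation*}
and since $M \ge q^{1/2}(\log q)^2$ throughout, choosing $c > 2c_0$ forces the right-hand side to $0$. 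Taking the constant in the statement to be the larger of the two constants obtained finishes the deduction.

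The argument is routine once Theorem~\ref{thm:container1} is in hand; the one \emph{delicate point} is that the single-container tail bound must overwhelm $|\mathcal{C}| = \exp(\Theta(q^{1/2}(\log q)^2))$ in the union bound, which is exactly why $c$ must be chosen comparable to the container-count exponent $c_0$. This works precisely because the threshold $M$ never drops below $q^{1/2}(\log q)^2$ on the ranges covered by the first two bounds — that value being the typical size of $C \cap \mathbf{S}_p$ at the critical density $p \approx q^{-1/2}(\log q)^2$ where the two regimes meet.
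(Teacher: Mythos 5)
Your proof is correct and is essentially the same first-moment calculation the paper carries out: your union bound $\sum_{C}\Pr\big[|C\cap\mathbf{S}_p|\ge cM\big]\le|\mathcal{C}|\binom{9q}{cM}p^{cM}$ is the same quantity the paper bounds as $\mathbb{E}[\mathbf{X}_m(p)]$ before applying Markov's inequality, and the Chernoff argument for the ``Furthermore'' part is identical. The only cosmetic difference is that you unify the two $p$-ranges through $M=\max\{pq,\ q^{1/2}(\log q)^2\}$ and bound each container's intersection directly via a binomial tail, whereas the paper fixes $m=cpq$ and then handles the small-$p$ regime by monotonicity of $\mathbb{E}[\mathbf{X}_m(p)]$ in $p$.
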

\begin{proof}
Let $\mathbf{S}_p$ be a $p$-random set of $\mathbb{F}_{q}^2$ and $\mathbf{X}_m(p)$ denote the number of general position sets of size $m$ in $\mathbf{S}_p$. It follows from Chernoff's bound that $|\mathbf{S}_p| \leq (1+o(1))pq^2$ a.a.s. for $p \geq q^{-2}\log q$. The ``Furthermore'' part of our claim is due to the fact $\alpha(\mathbb{F}^2_q,p) \le |\mathbf{S}_p|$. So it suffices to prove the first two inequalities.

By Theorem~\ref{thm:container1}, there exists a collection $\mathcal{C}$ of point sets in $\mathbb{F}^2_q$ satisfying
\begin{enumerate}[(i)]
        \item every general position set of $\mathbb{F}^2_q$ is contained in some member of $\mathcal{C}$, 
        \item $|C|\le 9q$ for all $C\in \mathcal{C}$, and 
        \item $|\mathcal{C}|\le \exp \l(cq^{1/2}(\log q)^2\r)$.
\end{enumerate} Here, $c$ is an absolute constant and without loss of generality, we may assume $c \ge 36e$.

First we consider the range $p\ge q^{-1/2}(\log q)^2$. We set $m = cpq$. By the linearity of expectation and statements (i)-(iii) above, we can estimate 
    \begin{equation}\label{eq:randomturan1_upper}
        \begin{aligned}
            \mathbb{E}[\mathbf{X}_m(p)] \le \sum_{C\in \mathcal{C}}\binom{|C|}{m} p^m
            \le |\mathcal{C}|\binom{9q}{m}p^m
            &\le \exp\left( cq^{1/2}(\log q)^2 \right) \left(\frac{9eq}{m}\right)^m p^m\\
            &\le \exp\l( cq^{1/2}(\log q)^2+m \cdot \log\l(\frac{9epq}{m}\r) \r)\\
            &\le \exp\l( m+m \cdot \log\l(\frac{1}{4}\r)\r) \\
            &= \exp(-cpq) \rightarrow 0 \quad \text{as}\quad q\rightarrow\infty.
        \end{aligned}
    \end{equation}
So it follows from Markov's inequality that
\begin{equation*}
    \Pr\left[\alpha(\mathbb{F}^2_q,p)\ge m\right] = \Pr\left[\mathbf{X}_m(p) \ge 1\right] \le \mathbb{E}[\textbf{X}_m(p)]  \rightarrow 0 \quad \text{as}\quad q\rightarrow\infty.
\end{equation*} Therefore, a.a.s. we have $\alpha(\mathbb{F}^2_q,p) < m = cpq$.

Now we consider the range $p \leq q^{-1/2}(\log q)^2$. Set $m = cq^{1/2}(\log q)^2$. Since $\mathbb{E}[\mathbf{X}_m(p)]$ is nondecreasing in $p$, it follows from \eqref{eq:randomturan1_upper} that 
\begin{equation*}
    \mathbb{E}\left[\mathbf{X}_m(p)\right] \le \mathbb{E}\left[\mathbf{X}_m\left(q^{-1/2}(\log q)^2\right)\right] \le \exp\left(-cq^{1/2}(\log q)^2\right) \rightarrow 0 \quad \text{as}\quad q\rightarrow\infty.
\end{equation*}
Similarly, it follows from Markov's inequality that a.a.s. $\alpha(\mathbb{F}^2_q,p) < m=cq^{1/2}(\log q)^2$.
\end{proof}

To finish the proof of Theorem~\ref{thm:randomturan1}, it suffices to prove the lower bounds for $\alpha(\mathbb{F}_q^2,p)$.

\begin{proof}[Proof of Theorem~\ref{thm:randomturan1}]
    Let $\mathbf{S}_p$ be a $p$-random set of $\mathbb{F}_{q}^2$ and $\mathbf{T}(p)$ denote the number of collinear triples in $\mathbf{S}_p$.
    
    We first consider the range $q^{-2}\log q\le p\le q^{-3/2}/\log q$. Since the total number of collinear triples in $\mathbb{F}_q^2$ is $(1+o(1))q^5/6$ (see Lemma~\ref{flat_counting}), we have $\mathbb{E}[\mathbf{T}(p)] = (1+o(1))p^3q^5/6$. So it follows from Markov's inequality that  
    \begin{equation*}
           \Pr\left[\mathbf{T}(p) \ge \frac{pq^2}{2}\right] \le (1+o(1))p^2q^3/3 \rightarrow 0 \quad \text{as}\quad q\rightarrow\infty.
    \end{equation*} On the other hand, it follows from Chernoff's bound that a.a.s. $|\mathbf{S}_p| = (1+o(1))pq^2$.
    Therefore, by deleting a point from each collinear triple in $\mathbf{S}_p$, we obtain a general position subset of $\mathbf{S}_p$ of size at least $(\frac{1}{2} + o(1))pq^2$. Hence, we have a.a.s.
    \begin{equation}\label{eq:randomturan1}
        \alpha(\mathbb{F}^2_q,p)
        \ge \left(\frac{1}{2} + o(1)\right)pq^2.
    \end{equation}

    For the range $q^{-3/2}/\log q \le p \leq q^{-1/2}(\log q)^2$, it follows from the nondecreasingness of $\alpha(\mathbb{F}^2_q,p)$ and~\eqref{eq:randomturan1} that a.a.s.
    \begin{equation*}
        \alpha(\mathbb{F}^2_q,p)
        \ge \alpha\left(\mathbb{F}^2_q,\frac{q^{-3/2}}{\log q}\right)
         \ge \left(\frac{1}{2}+o(1)\right)\frac{q^{1/2}}{\log q}.
    \end{equation*}
    
    Finally we consider the range $q^{-1/2}(\log q)^2\le p\le 1$. Let $\mathbf{Y} = \mathbf{S}_p \cap \left\{(x,x^2) :~ x\in \mathbb{F}_{q}\right\}$. By Chernoff's bound, we have a.a.s. $|\mathbf{Y}|=(1+o(1))pq$. As a consequence, $\alpha(\mathbb{F}^2_q,p) \ge |\textbf{Y}| = (1+o(1))pq$ a.a.s. in this range.
    
    These lower bounds together with Proposition~\ref{randomturan1_upper} complete the proof of Theorem~\ref{thm:randomturan1}.
\end{proof}

\section{Counting general position sets in \texorpdfstring{$\mathbb{F}_q^d$}{}}\label{sec:counting}

We aim to prove Theorems~\ref{thm:counting1},~\ref{thm:counting2}, and~\ref{thm:counting3} in this section.

\subsection{Supersaturation for coplanar tuples in \texorpdfstring{$\mathbb{F}_q^d$}{}}

The main result of this subsection is the following supersaturation result for coplanar $(d+1)$-tuples in large point sets of $\mathbb{F}_{q}^d$. Recall that a \textit{coplanar $(d+1)$-tuple} consists of $d+1$ points contained in some hyperplane.

\begin{lemma}\label{supersaturation_Fqd}
    For every integer $d\ge2$ there exists a constant $c>0$ such that the following holds for all prime power $q \ge 2(d+1)$. Suppose $P$ is a point set of size at least $q+2(d+1)$ in $\mathbb{F}_{q}^d$, then the number of coplanar $(d+1)$-tuples contained in $P$ is at least \begin{equation*}
        c \cdot \min\left\{\frac{|P|}{q}-1,1\right\} \cdot \frac{|P|^{d+1}}{q}.
    \end{equation*}
\end{lemma}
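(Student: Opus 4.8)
The plan is to prove Lemma~\ref{supersaturation_Fqd} by a double-counting argument that exploits the fact that $\mathbb{F}_q^d$ decomposes into hyperplanes in many ways, combined with the expander-mixing estimate from Lemma~\ref{middle_lines}. Write $n = |P|$. First I would count pairs $(\pi, T)$ where $\pi \in \mathbb{H}_q^d$ is a hyperplane and $T \subset \pi \cap P$ is a $(d+1)$-subset, so this count equals $\sum_{\pi} \binom{|\pi \cap P|}{d+1}$. Since each coplanar $(d+1)$-tuple lies on at least one hyperplane (and on at most $(1+o(1))q^{(d-2)}\cdot\text{something}$ — more carefully, the number of hyperplanes through $d+1$ points that actually span only a $(d-1)$-flat is bounded by the number of hyperplanes through a fixed $(d-1)$-flat, which is $O(1)$, or through a lower-dimensional flat, which is larger), the number of coplanar $(d+1)$-tuples is at least $\frac{1}{\kappa}\sum_{\pi}\binom{|\pi\cap P|}{d+1}$ where $\kappa$ is an upper bound on the number of hyperplanes containing a given coplanar tuple; since $d$ is fixed and a generic coplanar tuple spans a $(d-1)$-flat which lies in exactly one hyperplane, while degenerate tuples are fewer in number, a clean way is to restrict attention only to hyperplanes $\pi$ with $|\pi\cap P|$ in the ``middle range'' and argue that tuples are counted with bounded multiplicity there, or alternatively just divide by the trivial bound $\binom{d}{1}_q = (1+o(1))q^{d-1}$ on hyperplanes through a point and accept a worse constant.

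The heart of the argument is lower-bounding $\sum_{\pi}\binom{|\pi\cap P|}{d+1}$. By Lemma~\ref{middle_lines}, all but at most $8q^{d+1}/n$ hyperplanes $\pi$ satisfy $|\pi\cap P| \ge n/(2q)$. By Lemma~\ref{flat_counting} the total number of hyperplanes is $(1+o(1))q^d$. So, provided $n$ is large enough that $8q^{d+1}/n$ is a small fraction of $q^d$ — i.e. $n \gtrsim q$ — there are $\Omega(q^d)$ ``good'' hyperplanes each contributing $\binom{n/(2q)}{d+1} = \Omega\big((n/q)^{d+1}\big)$ when $n/(2q) \ge d+1$, giving $\sum_\pi \binom{|\pi\cap P|}{d+1} = \Omega\big(q^d \cdot (n/q)^{d+1}\big) = \Omega(n^{d+1}/q)$. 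Dividing by the $O(q^{d-1})$ multiplicity yields $\Omega(n^{d+1}/q^{d})$ coplanar tuples — hmm, that is off from the claimed $n^{d+1}/q$; I need to be more careful about multiplicity: a $(d+1)$-tuple spanning a $(d-1)$-flat lies in exactly one hyperplane, and the number of tuples spanning a lower-dimensional flat is negligible (at most $O(n^d)$, which is smaller than $n^{d+1}/q$ once $n \gg q$), so the correct multiplicity to use is $1$ for the dominant contribution, giving the clean bound $\Omega(n^{d+1}/q)$. The $\min\{n/q - 1, 1\}$ factor handles the regime $q + 2(d+1) \le n \lesssim q(d+1)$ where $\binom{n/(2q)}{d+1}$ might be small or zero: here one instead counts coplanar tuples more crudely, noting that with $n \ge q + 2(d+1)$ points in a union of $q$ hyperplanes, pigeonhole forces at least one hyperplane (indeed many, quantitatively $\sim (n/q - 1)\cdot q$ ``excess incidences'') to contain $\ge d+1$ points, and the number of coplanar $(d+1)$-tuples is then $\Omega\big((n/q - 1) \cdot q \cdot q^{d}\big)$-ish after accounting correctly — this needs the convexity of $\binom{\cdot}{d+1}$ and a careful bookkeeping of how the excess $n - q$ distributes.

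The step I expect to be the main obstacle is getting the dependence on $q$ exactly right in the multiplicity count — i.e. verifying that the dominant coplanar $(d+1)$-tuples are those spanning a full $(d-1)$-flat, each lying in a unique hyperplane, so that no spurious factor of $q^{d-1}$ creeps in. A secondary obstacle is the crossover regime $n \approx q$: here Lemma~\ref{middle_lines} still says most hyperplanes meet $P$ in $\ge n/(2q) \approx 1/2$ points, which is useless for $\binom{\cdot}{d+1}$, so one genuinely needs a separate pigeonhole/convexity argument to produce the $\min\{n/q-1,1\}$ factor; I would handle it by writing $n = q + t$ with $t \ge 2(d+1)$, observing that in \emph{any} partition of $\mathbb{F}_q^d$ into $q$ parallel hyperplanes the number of parts containing $\ge 2$ points of $P$ is $\ge t$ (else $|P| \le q + (\text{number of heavy parts}) < q + t$, wait one needs a slightly better count), iterating over many parallel classes, and then upgrading ``two points in a common hyperplane'' to ``$d+1$ points in a common hyperplane'' by combining information across the $\Theta(q^{d-1})$ parallel classes through a fixed $(d-2)$-flat. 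Once the two regimes are unified by the $\min$, the stated bound with an appropriate absolute constant $c = c(d)$ follows.
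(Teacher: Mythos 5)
Your approach is genuinely different from the paper's. You double-count the quantity $S := \sum_{\pi\in\mathbb{H}_q^d}\binom{|\pi\cap P|}{d+1}$ and use Lemma~\ref{middle_lines} (the expander-mixing estimate) to show $S=\Omega(|P|^{d+1}/q)$, then try to pass from $S$ to the number of coplanar $(d+1)$-tuples. The paper instead fixes a $(d-1)$-tuple $A\subset P$, takes a $(d-2)$-flat $F_A\supset A$, notes that the $q+1$ hyperplanes through $F_A$ together with $F_A$ itself partition $\mathbb{F}_q^d$ into $q+2$ ``sectors,'' each pair of points of $P\setminus A$ lying in a common sector gives a coplanar $(d+1)$-tuple with $A$, and then Jensen's inequality on $\binom{\cdot}{2}$ over $q+2$ sectors gives $\Omega\bigl(\min\{|P|/q-1,1\}\,|P|^2/q\bigr)$ such tuples per $A$; averaging over the $\binom{|P|}{d-1}$ choices of $A$ finishes. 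The paper's pencil argument has two advantages you would need to replicate: the multiplicity with which a tuple is produced is the constant $\binom{d+1}{d-1}$, and using pairs rather than $(d+1)$-tuples makes the Jensen step nonvacuous down to $|P|$ just above $q$, which is exactly the crossover regime.

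Your proposal has two genuine gaps. First, the multiplicity estimate is wrong: you claim the number of coplanar $(d+1)$-tuples spanning a flat of dimension $\le d-2$ is at most $O(|P|^d)$, but this fails. For instance, if $P$ is contained in a single $(d-2)$-flat $F$ (which has $q^{d-2}$ points, so this is compatible with $|P|$ up to $q^{d-2}$), then \emph{every} $(d+1)$-tuple of $P$ spans $\le d-2$, giving $\binom{|P|}{d+1}=\Theta(|P|^{d+1})$ degenerate tuples, each counted $q+1$ times in $S$. In that regime $S$ is dominated entirely by multiplicity-$(q+1)$ terms, so ``divide $S$ by the typical multiplicity $1$'' gives a wrong answer (overcounts by a factor $q$). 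The worst-case multiplicity of a coplanar tuple is $\Theta(q^{d-2})$, so a uniform division costs a factor $q^{d-2}$, which is fatal. There is no clean way to argue ``the dominant contribution is multiplicity one'' for arbitrary $P$: you would need a separate case analysis along the lines of ``either most tuples are nondegenerate or there is a heavy low-dimensional flat, in which case count inside it,'' which is more work than the paper's single argument. Second, in the crossover regime $q+2(d+1)\le|P|\lesssim 2(d+1)q$ the bound $\binom{|P|/(2q)}{d+1}$ from Lemma~\ref{middle_lines} is identically zero, so your main estimate is vacuous precisely where the $\min\{|P|/q-1,1\}$ factor is the interesting part of the statement. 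You acknowledge this and sketch a pigeonhole fix (``upgrading two points in a hyperplane to $d+1$ points by combining across parallel classes''), but this step is not worked out and it is not clear how to extract a factor of $q^{d}$ hyperplanes each contributing $\Omega((\mbox{excess})^{d+1})$ tuples from information about pairs only. The paper avoids both issues at once: the pencil through $F_A$ is a genuine partition (so no overcounting within a single $A$), and using $\binom{\cdot}{2}$ rather than $\binom{\cdot}{d+1}$ degenerates only when $|P|\le q+d+1$, just below the hypothesis.

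Note also that the paper's proof of this lemma does not use Lemma~\ref{middle_lines} at all; it is a purely combinatorial pigeonhole/Jensen argument. The spectral Lemma~\ref{middle_lines} is reserved for the balanced supersaturation of Lemma~\ref{supersaturation_Fq2} in the plane, where the tighter control on individual lines matters.
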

\begin{proof}
    Let $P$ be as given in our hypothesis. For each $(d-1)$-tuple $A\subset P$, we fix a $(d-2)$-flat $F_{A}$ in $\mathbb{F}_q^d$ that contains $A$. Notice that each $F_A$ is contained in exactly $q+1$ hyperplanes, denoted by $\pi_A^1,\pi_A^2\dots,\pi_A^{q+1}$, and these hyperplanes cover the space $\mathbb{F}_q^d$. Hence we have\begin{equation*}
        |F_{A}\cap (P\setminus A)| + \sum_{i=1}^{q+1}|(\pi_{A}^i\setminus F_{A})\cap P| = |P\setminus A|.
    \end{equation*}
    Observe that every pair of points in $\pi_A^i \setminus F_A$ or $F_A\setminus A$ together with $A$ forms a coplanar $(d+1)$-tuple. By Jensen's inequality, the number of coplanar $(d+1)$-tuples in $P$ containing $A$ is at least
    \begin{equation*}
        \binom{|F_{A}\cap (P\setminus A)|}{2} + \sum_{i=1}^{q+1}\binom{|(\pi_{A}^i\setminus F_{A})\cap P|}{2}
         \ge (q+2) \binom{\frac{|P \setminus A|}{q+2}}{2}
         = \frac{\left(|P|-d+1\right)\left(|P|-d-1-q\right)}{2(q+2)}.    
        \end{equation*}
    Note that $|P|\ge q+2(d+1)$ and $q\ge 2(d+1)$ by our hypothesis. If $|P|\le 2q$, we can check
    \begin{equation*}
        |P|-d-1-q \geq \frac{|P|}{2}-\frac{q}{2} = \l(\frac{|P|}{q}-1\r)\frac{q}{2} \ge \left(\frac{|P|}{q}-1\right)\frac{|P|}{4}. 
    \end{equation*} If $|P|\ge 2q$, then we can check $|P|-d-1-q \ge {|P|}/{4}$. In both cases, we have\begin{equation*}
        |P|-d-1-q\ge \min\left\{\frac{|P|}{q}-1,1\right\} \cdot \frac{|P|}{4}.
    \end{equation*} Combining above inequalities, we conclude that the number of coplanar $(d+1)$-tuples in $P$ containing $A$ is at least $\min\left\{{|P|}/{q}-1,1\right\} \cdot |P|^2/(32q)$.

    Averaging over all $(d-1)$-tuples in $P$, the number of coplanar $(d+1)$-tuples contained in $P$ is at least
    \begin{equation*}
        \frac{\binom{|P|}{d-1}}{\binom{d+1}{d-1}} \cdot \min\left\{\frac{|P|}{q}-1,1\right\} \cdot \frac{|P|^2}{32q}   
        \ge c \cdot \min\left\{\frac{|P|}{q}-1,1\right\} \cdot \frac{|P|^{d+1}}{q}, 
    \end{equation*}
    where $c>0$ is a sufficiently small constant depending only on $d$.
\end{proof}

In order to apply the supersaturation result succinctly in upcoming computations, we introduce some further notations and prove a refined version of Lemma~\ref{supersaturation_Fqd}. We use $\mathcal{H}_{q,d}$ to denote the $(d+1)$-graph whose vertices are all the points of $\mathbb{F}_{q}^d$ and edges are all the coplanar $(d+1)$-tuples. Given an $r$-graph $\mathcal{H}$ and two real numbers $c, \tau>0$, we say $\mathcal{H}$ is \textit{$(c,\tau)$-bounded} if
    \begin{equation*}
        \Delta_i(\mathcal{H})
        \le \frac{c|\mathcal{H}|}{|V(\mathcal{H})|}\tau^{i-1}
        \quad\text{for all}\quad 2 \le i \le r.
    \end{equation*}

\begin{lemma}\label{refined_supersaturation_Fqd}
    For every integer $d\ge 2$ there exists a constant $c>0$ such that the following holds for all prime power $q \ge 2(d+1)$. Suppose $P$ is a point set of size at least $q+2(d+1)$ in $\mathbb{F}_{q}^d$, then the induced subgraph of $\mathcal{H}_{q,d}$ on $P$ is $(c, \tau)$-bounded, where 
    \begin{equation*}
        \tau = \left(\min\{k-1,1\}\right)^{-\frac{d}{d+1}} k^{-1} q^{-\frac{1}{d+1}}
        \quad\text{and}\quad
        k = {|P|}/{q}. 
    \end{equation*}
\end{lemma}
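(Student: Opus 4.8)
The plan is to reduce Lemma~\ref{refined_supersaturation_Fqd} to Lemma~\ref{supersaturation_Fqd} by bounding each codegree $\Delta_i$ of the induced subgraph $\mathcal{H}_{q,d}[P]$ for $2 \le i \le d+1$, and then comparing these bounds against the target quantity $\frac{c|\mathcal{H}_{q,d}[P]|}{|P|}\tau^{i-1}$, using the lower bound on $|\mathcal{H}_{q,d}[P]|$ supplied by Lemma~\ref{supersaturation_Fqd}. Writing $m = |P|$ and $k = m/q$, Lemma~\ref{supersaturation_Fqd} gives $|\mathcal{H}_{q,d}[P]| \ge c_0 \min\{k-1,1\} m^{d+1}/q$ for some constant $c_0 = c_0(d)$, so the target at level $i$ is of order $\min\{k-1,1\} m^{d} q^{-1} \tau^{i-1}$, and I want to show each $\Delta_i$ is at most a constant times this.

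First I would record the codegree bounds. Fix an $i$-subset $U \subset P$. If $U$ is already contained in some hyperplane (the generic situation when $i \le d$), then a coplanar $(d+1)$-edge containing $U$ is obtained by choosing a hyperplane $\pi \supseteq U$ and then $d+1-i$ further points of $P$ in $\pi$. When $i \le d-1$, the $i$ points span at most an $(i-1)$-flat, which lies in $\Theta(q^{d-i})$ hyperplanes by Lemma~\ref{flat_counting}, and each hyperplane contains at most $|P| = m$ points of $P$; hence $\Delta_i \le O(q^{d-i}) \cdot m^{d+1-i} \le O(q^{d-i} m^{d+1-i})$ — but I will need to be more careful and also use that a typical hyperplane meets $P$ in roughly $m/q$ points, splitting hyperplanes into "light" and "heavy" ones via Lemma~\ref{middle_lines} (which is stated for all $d$) to get the sharper bound $\Delta_i \lesssim q^{d-i}(m/q)^{d+1-i} + (\text{correction from heavy planes})$. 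For $i = d$ the $d$ points generically span a hyperplane $\pi$ uniquely, so $\Delta_d \le \binom{|\pi \cap P|}{1} \le m$, and again Lemma~\ref{middle_lines} lets me say most $d$-subsets see only $O(m/q)$ completions while the exceptional ones are few. For $i = d+1$, $\Delta_{d+1} \le 1$ trivially. The case where $U$ spans a flat of dimension less than $i-1$ (so lies in more hyperplanes) only helps, since then $U$ determines strictly fewer points, but one must check it does not blow up the count — it does not, because the number of hyperplanes through a $j$-flat with $j < i-1$ is $\Theta(q^{d-j}) = \Theta(q^{d-i+1+(i-1-j)})$, compensated by the $d+1-i$ free points each ranging over $\le m$ values, and this is dominated once $m \le (1+o(1)) \cdot dq$ is used (which follows from Lemma~\ref{supersaturation_Fqd} applied to show $P$ cannot be too large, or simply from the hyperplane-covering bound $|P| \le dq$).

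Next I would plug in $\tau = (\min\{k-1,1\})^{-d/(d+1)} k^{-1} q^{-1/(d+1)}$ and verify the inequality $\Delta_i \le \frac{c\,|\mathcal{H}_{q,d}[P]|}{m}\,\tau^{i-1}$ for each $i$. The worst (tightest) cases should be the two extremes $i = 2$ and $i = d+1$, which is exactly how $\tau$ was reverse-engineered: at $i = d+1$ one needs $1 \lesssim \min\{k-1,1\}\, m^{d} q^{-1} \tau^{d}$, and substituting $m = kq$ and the value of $\tau$ makes the powers of $k$, of $q$, and of $\min\{k-1,1\}$ all cancel, leaving a constant; at $i=2$ one needs $\Delta_2 \lesssim \min\{k-1,1\}\,m^{d}q^{-1}\tau$, and the bound $\Delta_2 \lesssim q^{d-2}(m/q)^{d-1} = k^{d-1} q^{-1} \cdot q^{d-1}\cdot q^{-(d-1)}$... — here the algebra needs to be done honestly, tracking the $\min\{k-1,1\}$ factor, but the design of $\tau$ guarantees it balances. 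The intermediate $i$ follow by a convexity/interpolation argument: the ratio $\Delta_i / (\text{target}_i)$ is log-convex in $i$ given the multiplicative structure $\Delta_i \approx \Delta_{i-1}\cdot(\text{factor})$, so bounding the endpoints suffices.

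The main obstacle I anticipate is the codegree estimate for the middle range $2 < i < d$, specifically getting the factor $q^{d-i}(m/q)^{d+1-i}$ rather than the weaker $q^{d-i} m^{d+1-i}$: this requires controlling how many hyperplanes through the flat spanned by $U$ are "heavy" (contain many points of $P$), for which one invokes Lemma~\ref{middle_lines} to bound the number of heavy hyperplanes by $O(q^{d+1}/m)$ and then checks that their contribution $O(q^{d+1}/m) \cdot m^{d+1-i}$ (crudely, with $d+1-i$ free points) does not dominate — this works precisely because $m = \Theta(q)$ up to the constant $d$, so $q^{d+1}/m = \Theta(q^d)$ and one must be slightly clever about which hyperplanes through $U$'s flat can be heavy simultaneously. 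A clean way around this is to bound $\Delta_i$ by first bounding $\Delta_{d}$ (which is $\le$ the max number of points of $P$ on a hyperplane, treated via Lemma~\ref{middle_lines}) and $\Delta_i \le \Delta_d \cdot (\text{number of hyperplanes through the } (i-1)\text{-flat}) = \Delta_d \cdot O(q^{d-i})$ only when $U$ spans a hyperplane would be wrong — instead $\Delta_i \le (\text{number of hyperplanes} \supseteq \langle U\rangle)\cdot \max_\pi\binom{|\pi\cap P|}{d+1-i}$, and I bound the latter maximum using that all but $O(q^{d+1}/m)$ hyperplanes are light, handling the heavy ones with the trivial $\binom{m}{d+1-i}$ but noting there are few of them relative to the $\Theta(q^{d-i+1})$ total hyperplanes through an $(i-2)$-flat — when $i \ge 2$ this ratio is controlled. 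I will present the argument with $\Delta_2$ and $\Delta_{d+1}$ done in full and the rest by the interpolation remark, flagging that all constants depend only on $d$.
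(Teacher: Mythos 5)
Your plan diverges from the paper's argument, and the divergence contains a genuine gap at the step you yourself flag as delicate: controlling how many ``heavy'' hyperplanes pass through the flat spanned by $U$. Lemma~\ref{middle_lines} bounds the \emph{total} number of atypical hyperplanes (those meeting $P$ in $\not\in [|P|/2q, 2|P|/q]$ points) by $O(q^{d+1}/|P|)$. Since here $|P| = kq$ with $k$ as small as $1+o(1)$, this quantity is $\Theta(q^{d}/k)$, which is comparable to the \emph{total} number of hyperplanes in $\mathbb{F}_q^d$. So the global heavy-plane bound says nothing about the $\approx q^{d-2}$ hyperplanes through a fixed line: all of them could be heavy as far as Lemma~\ref{middle_lines} is concerned. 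If a pair $\{u,v\}$ lies on a line contained in even a few hyperplanes each meeting $P$ in $\Theta(|P|)$ points, you get $\Delta_2 \approx |P|^{d-1}$; one can check that the trivial bound $\Delta_2 \le |P|^{d-1}$ fails the target $\frac{c|\mathcal{H}|}{|P|}\tau$ by a factor of $q^{1/(d+1)}/\min\{k-1,1\}^{1/(d+1)}$ once you substitute the supersaturation lower bound for $|\mathcal{H}|$. So the $\Delta_2$ case is the crux, and the proposal leaves it open.

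The idea the paper uses, and which your proposal is missing, is a \emph{self-improving} flat bound that sidesteps incidence geometry entirely: for any $j$-flat $F$ with $j \le d-1$, one has $|F\cap P| \le (d+1)|\mathcal{H}|^{1/(d+1)}$, because every $(d+1)$-subset of $F\cap P$ is a coplanar $(d+1)$-tuple and hence $\binom{|F\cap P|}{d+1}\le|\mathcal{H}|$. This expresses the flat intersections in terms of the very quantity $|\mathcal{H}|$ that appears in the $(c,\tau)$-boundedness target, so when $P$ concentrates on a hyperplane the target grows proportionally and the algebra still closes. Concretely, the paper chooses a coplanar sequence $(v_1,v_2,\dots,v_{d+1})$ extending $\{v_1,v_2\}$, lets $i$ be the first index at which $v_1,\dots,v_i$ become affinely dependent, notes $v_i$ then lies in the $(i-2)$-flat spanned by $v_1,\dots,v_{i-1}$ so has at most $(d+1)|\mathcal{H}|^{1/(d+1)}$ choices, and bounds the remaining choices crudely by $|P|^{d-2}$, getting $\Delta_2 \le (d^2-1)|\mathcal{H}|^{1/(d+1)}|P|^{d-2}$. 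Plugging in the supersaturation lower bound for $|\mathcal{H}|^{d/(d+1)}$ then exactly reproduces $\frac{c|\mathcal{H}|}{|P|}\tau$.

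Two further points. First, for $j\ge 3$ the trivial bound $\Delta_j(\mathcal{H}) \le |P|^{d+1-j}$ already suffices once combined with Lemma~\ref{supersaturation_Fqd} (the exponents $1-\frac{(j-1)d}{d+1}\le 0$ and $j-2-\frac{j-1}{d+1}\ge 0$ do the work), so the sharper geometric codegree estimates and the log-convexity interpolation you propose are unnecessary. Second, even as a heuristic the interpolation step is shaky here because the $\Delta_2$ bound you would need is not of the multiplicative form $\Delta_2 \approx \Delta_3\cdot(\text{factor})$ one would want; the two endpoints $i=2$ and $i=d+1$ behave quite differently, which is precisely why the paper treats them separately.
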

\begin{proof}
    Let $P$ be as given in our hypothesis, $\mathcal{H} = \mathcal{H}_{q,d}[P]$ be the induced subgraph, and $c_1>0$ be the constant guaranteed by Lemma~\ref{supersaturation_Fqd}. For every $j \in [d-1]$, we claim that the intersection of $P$ and a $j$-flat $F_j$ has size at most $(d+1)|\mathcal{H}|^{1/(d+1)}$. Otherwise, since every $(d+1)$-tuple in $F_j$ is coplanar, we would have \begin{equation*}
        |\mathcal{H}| \ge \binom{(d+1)|\mathcal{H}|^{1/(d+1)}}{d+1} > |\mathcal{H}|,
    \end{equation*} which is a contradiction.
    
    We consider the upper bound for $\Delta_2(\mathcal{H})$ first. Fix a pair of points $v_1,v_2\in P$, for each sequence $(v_1,v_2, \dots, v_{d+1})$ that forms a coplanar $(d+1)$-tuple in $P$, we define $\psi(v_1,v_2,\dots,v_{d+1})$ to be the smallest integer $i$ such that $v_1,\dots,v_i$ are affinely dependent. It follows from the claim above that the preimage $\psi^{-1}(i)$ has size at most $(d+1)|\mathcal{H}|^{{1}/{(d+1)}}|P|^{d-2}$. Combined with Lemma~\ref{supersaturation_Fqd}, we see that the number of coplanar $(d+1)$-tuples containing $\{v_1, v_2\}$ is at most 
    \begin{align*}
        \sum_{i=3}^{d+1} |\psi^{-1}(i)| &\leq (d-1) \cdot (d+1)|\mathcal{H}|^{\frac{1}{d+1}}|P|^{d-2}\\
        & = (d^2-1) |\mathcal{H}|^{\frac{1}{d+1}} 
            \cdot 
            \frac{\left(c_1\cdot \min\left\{k-1, 1\right\}\right)^{\frac{d}{d+1}} \cdot |P|^{d}/q^{\frac{d}{d+1}}}{\left(c_1\cdot \min\left\{k-1, 1\right\}\right)^{\frac{d}{d+1}} \cdot |P|^{2}/q^{\frac{d}{d+1}}} \\
        & \le (d^2-1) |\mathcal{H}|^{\frac{1}{d+1}} 
            \cdot \frac{|\mathcal{H}|^{\frac{d}{d+1}}}{\left(c_1\cdot \min\left\{k-1, 1\right\}\right)^{\frac{d}{d+1}} \cdot |P|\cdot(kq)/q^{\frac{d}{d+1}}} \\
        & = (d^2-1) {c_1^{-\frac{d}{d+1}}} \cdot \frac{|\mathcal{H}|}{|P|}\tau.
    \end{align*}
    Now we consider $\Delta_j(\mathcal{H})$ for $j \geq 3$. Using the fact $\Delta_j(\mathcal{H}) \leq |P|^{d+1-j}$ and Lemma~\ref{supersaturation_Fqd}, we have
    \begin{align*}
         \Delta_j(\mathcal{H})
        & \le \frac{c_1\cdot \min\left\{k-1, 1\right\} \cdot |P|^{d+1}/q}{c_1\cdot \min\left\{k-1, 1\right\} \cdot |P|^j /q} \\
        & \le \frac{|\mathcal{H}|}{c_1\cdot \min\left\{k-1, 1\right\} \cdot |P| \cdot (kq)^{j-1}/q} \\
        & \le \frac{1}{c_1 \cdot \min\{k-1,1\}^{1-\frac{(j-1)d}{d+1}} q^{j-2 - \frac{j-1}{d+1}}} \cdot \frac{|\mathcal{H}|}{|P|} \tau^{j-1} \\
       & \le  \frac{1}{c_1}\cdot \frac{|\mathcal{H}|}{|P|} \tau^{j-1},
    \end{align*}
    where the last inequality follows from $1-\frac{(j-1)d}{d+1} \leq 0$ and $j-2 - \frac{j-1}{d+1} \geq 0$. Hence we conclude the proof by taking $c$ to be a sufficiently large constant depending on $d$ and $c_1$.
\end{proof}

\subsection{The second container theorem}
We prove the following container theorem for general position sets in $\mathbb{F}^d_q$ using Lemmas~\ref{supersaturation_Fqd} and~\ref{refined_supersaturation_Fqd}. 

\begin{theorem}\label{thm:container2}
    For every integer $d\ge 2$ there exists a constant $c>0$ such that for sufficiently large prime power $q$ and real number $k \geq 1+cq^{-1/d}$, we have a collection $\mathcal{C}=\mathcal{C}(q,k)$ of point sets (called containers) in $\mathbb{F}^d_q$ satisfying
    \begin{enumerate}[(i)]
        \item every general position set in $\mathbb{F}^d_q$ is contained in some member of $\mathcal{C}$,
        \item $|C|\le kq$ for all $C\in \mathcal{C}$, and
        \item $|\mathcal{C}|\le \exp \l( c \cdot \l(q/\min\left\{k-1,~1\right\} \r)^{\frac{d}{d+1}} (\log q)^2\r)$.
    \end{enumerate}
\end{theorem}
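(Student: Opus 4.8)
The plan is to mimic the iterative container argument from the proof of Theorem~\ref{thm:container1}, replacing the ad hoc supersaturation estimates for collinear triples in $\mathbb{F}_q^2$ with the clean $(c,\tau)$-boundedness statement of Lemma~\ref{refined_supersaturation_Fqd}. Set $\mathcal{C}_0 = \{\mathbb{F}_q^d\}$ and $\varepsilon = 1/e$. At each stage, split $\mathcal{C}_j$ into the "large" members with $|C| > kq$ and the "small" ones with $|C| \le kq$; if there are no large members we stop. For each large $C$, apply Lemma~\ref{refined_supersaturation_Fqd} to $P = C$ (note $|C| > kq \ge q + cq^{1-1/d} \ge q + 2(d+1)$ once $q$ is large, so the hypothesis is met) to learn that $\mathcal{H}_{q,d}[C]$ is $(c_0,\tau_C)$-bounded with $k_C = |C|/q$ and $\tau_C = (\min\{k_C-1,1\})^{-d/(d+1)} k_C^{-1} q^{-1/(d+1)}$; crucially, since $|C| > kq$, one checks $\tau_C \le (\min\{k-1,1\})^{-d/(d+1)} q^{-1/(d+1)}$ up to constants, so all the $\tau_C$'s are uniformly bounded by a single $\tau \asymp (q/\min\{k-1,1\})^{-d/(d+1)}$. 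Then feed $\mathcal{H}_{q,d}[C]$ into Lemma~\ref{hypergraphcontainer} with this $\tau_C$ and $\varepsilon$.

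The verification that the hypotheses of Lemma~\ref{hypergraphcontainer} hold is the routine core: $(c_0,\tau_C)$-boundedness means $\Delta_i \le c_0 (|\mathcal{H}|/|V|)\tau_C^{i-1}$, and plugging this into the definition of $\Delta(\mathcal{H},\tau_C)$ gives $\Delta(\mathcal{H}_{q,d}[C],\tau_C) \le \frac{2^{\binom{d+1}{2}-1}}{d+1} c_0 \sum_{i=2}^{d+1} 2^{-\binom{i-1}{2}}$, a constant depending only on $d$; since $\varepsilon = 1/e$ is fixed, this is $\le \varepsilon/(12\cdot (d+1)!)$ provided $c_0$ is small enough — but $c_0$ comes from Lemma~\ref{refined_supersaturation_Fqd} and is not under our control, so instead we absorb the constant by noting the container lemma only needs $\Delta(\mathcal{H},\tau) \le \varepsilon/(12\cdot r!)$ and we may shrink $\varepsilon$ to a fixed constant $\varepsilon_0 = \varepsilon_0(d)$; all subsequent bounds only pick up $\log(1/\varepsilon_0) = O_d(1)$ factors. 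Also $\tau_C < 1/(200(d+1)(d+1)!)$ holds for $q$ large. Lemma~\ref{hypergraphcontainer} then yields $\mathcal{C}_C$ with $\log|\mathcal{C}_C| \le 1000(d+1)((d+1)!)^3 |C|\,\tau_C \log(1/\varepsilon_0)\log(1/\tau_C)$, and since $|C|\tau_C \le k_Cq \cdot (\min\{k_C-1,1\})^{-d/(d+1)}k_C^{-1}q^{-1/(d+1)} = (q/\min\{k_C-1,1\})^{d/(d+1)} \le (q/\min\{k-1,1\})^{d/(d+1)}$ (using $\min\{k_C-1,1\}\ge\min\{k-1,1\}$ since $k_C > k$) and $\log(1/\tau_C) = O(\log q)$, we get $\log|\mathcal{C}_C| \le C_1 (q/\min\{k-1,1\})^{d/(d+1)}\log q$ for a constant $C_1 = C_1(d)$.

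Next comes the iteration-depth bound, exactly parallel to Theorem~\ref{thm:container1}. Property (iii) of Lemma~\ref{hypergraphcontainer} says $\mathcal{H}_{q,d}[C']$ has $\le \varepsilon_0|\mathcal{H}_{q,d}[C]|$ edges for every $C' \in \mathcal{C}_C$, hence $\sum_{v\in C\setminus C'}\delta(v) \ge (1-\varepsilon_0)|\mathcal{H}_{q,d}[C]|$ where $\delta(v)$ is the degree of $v$ in $\mathcal{H}_{q,d}[C]$; combining with the trivial bound $\delta(v) \le \Delta_1(\mathcal{H}_{q,d}[C]) \le |C|^d$ — wait, we want $|C\setminus C'|$ large — we use $|C\setminus C'| \ge (1-\varepsilon_0)|\mathcal{H}_{q,d}[C]|/\Delta_1(\mathcal{H}_{q,d}[C])$. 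To make this a constant-fraction loss we need $\Delta_1(\mathcal{H}_{q,d}[C]) \le O(|\mathcal{H}_{q,d}[C]|/|C|)$; by Lemma~\ref{supersaturation_Fqd}, $|\mathcal{H}_{q,d}[C]| \ge c_1\min\{k_C-1,1\}|C|^{d+1}/q$, and $\Delta_1 \le |C|^d$, so $\Delta_1/(|\mathcal{H}|/|C|) \le q/(c_1\min\{k_C-1,1\}|C|) \le q/(c_1\min\{k-1,1\}\cdot kq) = 1/(c_1 k\min\{k-1,1\})$. If $k \ge 2$ this is $\le 1/c_1$ and we lose a fixed fraction of points per step, so the process stops after $O(\log(q^d)) = O(\log q)$ steps; if $1 + cq^{-1/d} \le k < 2$, then $\min\{k-1,1\} = k-1$ and the loss fraction is $\Omega(k-1) \ge \Omega(cq^{-1/d})$ per step, giving $O(q^{1/d}\log q / (k-1)) $ — hmm, this is too many steps. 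The main obstacle I anticipate is precisely this: when $k$ is close to $1$, the per-step shrinkage is small, so one cannot afford a naive step count. The fix (as in \cite{bhowmick2022counting}) is to observe that $\log|\mathcal{C}| \le (\text{number of steps}) \times \max_C \log|\mathcal{C}_C|$ is too lossy; instead one tracks $\log|\mathcal{C}_j|$ multiplicatively and notes that a container of size $|C|$ contributes $\log|\mathcal{C}_C| \lesssim (|C|/\min\{k-1,1\}^{?})^{d/(d+1)}\cdots$ — more carefully, at a stage where containers have size $s q$ with $s$ between $k$ and (say) $2$, the relevant bound is $\log|\mathcal{C}_C| \lesssim (q/(s-1))^{d/(d+1)}\log q$ and the number of such stages telescopes because each step multiplies the "excess" $s - 1$ by a constant $< 1$ (once $|C| \le 2q$, $|C\setminus C'| \ge \Omega((s-1)\cdot sq)$, i.e. $s_{\text{new}} - 1 \le (1-\Omega(1))(s-1)$, geometric decay in $s-1$), so $\sum_{\text{stages}} (q/(s-1))^{d/(d+1)}\log q$ is a geometric series dominated by its last term $(q/\min\{k-1,1\})^{d/(d+1)}\log q$, and the phase where $|C| > 2q$ contributes only $O(\log q)$ steps each costing $O(q^{d/(d+1)}\log q)$. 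Summing everything and absorbing constants and the two $\log q$ factors, we obtain $\log|\mathcal{C}| \le c\,(q/\min\{k-1,1\})^{d/(d+1)}(\log q)^2$, giving (iii); properties (i) and (ii) follow by the same induction-on-$j$ and stopping-condition arguments as in Theorem~\ref{thm:container1}, using part~(i) of Lemma~\ref{hypergraphcontainer} (every general position set is independent in $\mathcal{H}_{q,d}$, hence in each $\mathcal{H}_{q,d}[C]$) to propagate containment down the tree.
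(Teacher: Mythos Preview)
Your overall strategy matches the paper's, but there is a genuine gap in the iteration-depth bound, together with a minor slip in handling the constants.

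On the minor point: shrinking $\varepsilon$ makes the constraint $\Delta(\mathcal{H},\tau)\le \varepsilon/(12\cdot r!)$ \emph{harder}, not easier. The paper's remedy is instead to inflate $\tau_C$ by a fixed $d$-dependent factor, setting $\hat{\tau}_C = c_1\cdot 2^{\binom{d+1}{2}}\cdot 12(d+1)!\cdot \tau_C$; since each term $\Delta_i/\hat{\tau}_C^{i-1}$ then picks up a factor at most $(c_1\cdot 2^{\binom{d+1}{2}}\cdot 12(d+1)!)^{-1}$ from the $(c_1,\tau_C)$-boundedness, one obtains $\Delta(\mathcal{H}_C,\hat{\tau}_C)\le \varepsilon/(12(d+1)!)$ with $\varepsilon=1/e$ fixed. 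This costs nothing asymptotically.

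The real problem is your shrinkage estimate in the regime $k<s\le 2$. From $|C\setminus C'|\ge (1-\varepsilon_0)|\mathcal{H}_C|/\Delta_1(\mathcal{H}_C)$, the only available bound on $\Delta_1$ is the trivial $\Delta_1\le |C|^d$ (Lemma~\ref{refined_supersaturation_Fqd} controls $\Delta_i$ only for $i\ge 2$), and combining with $|\mathcal{H}_C|\ge c_1(s-1)|C|^{d+1}/q$ gives $|C\setminus C'|\ge \Omega\bigl((s-1)|C|/q\bigr)=\Omega\bigl((s-1)s\bigr)$, \emph{not} $\Omega\bigl((s-1)sq\bigr)$ as you claim. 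Thus $s_{\text{new}}-1\ge (s-1)\bigl(1-O(1/q)\bigr)$, which is nowhere near a constant-factor contraction, and your geometric-series telescoping collapses. No better $\Delta_1$ bound is available in general: if $C$ lies in a single hyperplane (possible whenever $|C|\le q^{d-1}$, unavoidable for $d\ge 3$ and small $k$), then $\Delta_1\sim |C|^d/d!$.

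The paper sidesteps this entirely by tracking the \emph{edge count} rather than the vertex count. Property~(iii) of Lemma~\ref{hypergraphcontainer} gives $|\mathcal{H}_{q,d}[C']|\le \varepsilon\,|\mathcal{H}_{q,d}[C]|$ for every child $C'$, so after $x=d(d+1)\log q$ steps any surviving large container $C$ would satisfy $|\mathcal{H}_{q,d}[C]|\le \varepsilon^{x}|\mathcal{H}_{q,d}|<e^{-x}q^{d(d+1)}=1$, whereas Lemma~\ref{supersaturation_Fqd} forces at least one edge since $|C|>kq\ge q+2(d+1)$. Hence the iteration halts within $O(\log q)$ steps regardless of how close $k$ is to $1$, and multiplying by the uniform per-step bound $\log|\mathcal{C}_C|\le c_2\,(q/\min\{k-1,1\})^{d/(d+1)}\log q$ gives~(iii) directly, with no telescoping needed.
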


It is worth highlighting that in Theorem~\ref{thm:container2}~(ii),  we have the flexibility to minimize the container size to as little as $q+cq^{\frac{d-1}{d}} = (1+o(1))q$, while in Theorem~\ref{thm:container1}~(ii) we can only guarantee an upper bound $9q$ for the container size. However, the upper bound for the total number of containers guaranteed by Theorem~\ref{thm:container2}~(iii) is worse than that in Theorem~\ref{thm:container1}~(iii). 

\begin{proof}
    Let $k > 1+cq^{-1/d}$ be fixed with $c$ being a sufficiently large constant (depending on $d$). We set $\mathcal{C}_{0} = \{\mathbb{F}_q^d\}$ and $\varepsilon = 1/e$. Suppose that $\mathcal{C}_j$ has been defined for some integer $j \ge 0$. Let \begin{equation*}
        \mathcal{C}_{j}^{>} = \left\{C\in \mathcal{C}_j :~ |C|>kq\right\}
            \quad\text{and}\quad
        \mathcal{C}_{j}^{\le} = \left\{C\in \mathcal{C}_j :~ |C|\le kq\right\}. 
    \end{equation*}
    If $\mathcal{C}_{j}^{>} = \emptyset$, then we let $\mathcal{C} = \mathcal{C}_j$. Otherwise, for each $C\in \mathcal{C}_{j}^{>}$ we set
    \begin{equation*}
        k_C = {|C|}/{q},\quad 
        \tau_{C} = \left(\min\{k_{C}-1,1\}\right)^{-\frac{d}{d+1}}k_{C}^{-1} q^{-\frac{1}{d+1}}, 
        \quad\text{and}\quad
        \hat{\tau}_C = c_1\cdot 2^{\binom{d+1}{2}}\cdot 12(d+1)!\cdot \tau_C, 
    \end{equation*}
    where $c_1>0$ is the constant guaranteed by Lemma~\ref{refined_supersaturation_Fqd}. Furthermore, let $\mathcal{H}_C = \mathcal{H}_{q,d}[C]$ be the induced subgraph of $\mathcal{H}_{q,d}$ on $C$, and recall that $\mathcal{H}_{q,d}$ is the hypergraph defined by coplanar $(d+1)$-tuples in $\mathbb{F}_{q}^d$. Since $|C| > kq \ge q+2(d+1)$, it follows from Lemma~\ref{refined_supersaturation_Fqd} that $\mathcal{H}_C$ is $(c_1, \tau_C)$-bounded. By the definitions of $(c_1, \tau_C)$-boundedness and $\Delta\left(\mathcal{H}_C,\hat{\tau}_C\right)$, we can compute
     \begin{align*}
            \Delta\left(\mathcal{H}_C,\hat{\tau}_C\right)
            = \frac{2^{\binom{d+1}{2}-1}|C|}{(d+1)|\mathcal{H}_C|} \sum_{i=2}^{d+1}\frac{\Delta_{i}\left(\mathcal{H}_C\right)}{2^{\binom{i-1}{2}} \hat{\tau}_C^{i-1}}
            &\le \frac{2^{\binom{d+1}{2}-1}|C|}{(d+1)|\mathcal{H}_C|} \sum_{i=2}^{d+1} \frac{|\mathcal{H}_C|}{|C|} \frac{c_1 \tau_C^{i-1}}{2^{\binom{i-1}{2}} \hat{\tau}_C^{i-1}}\\
             &\le \frac{2^{\binom{d+1}{2}-1}}{d+1} \sum_{i=2}^{d+1} \frac{1}{2^{\binom{i-1}{2}} \cdot 2^{\binom{d+1}{2}}\cdot 12(d+1)!}\\
            &\le  \frac{\varepsilon}{12(d+1)!}.
    \end{align*}
    In addition, since $c$ and $q$ are sufficiently large, we can check that $\hat{\tau}_C \le \frac{1}{200\cdot (d+1) \cdot (d+1)!}$. It follows from Lemma~\ref{hypergraphcontainer}, applied to $\mathcal{H}_C$, that there exists a collection $\mathcal{C}_{C}$ of subsets of $C$ such that
    \begin{enumerate}[(a)]
        \item every independent set of $\mathcal{H}_C$ is contained in some member of $\mathcal{C}_C$,
        \item the size of $\mathcal{C}_C$ satisfies 
            \begin{align*}
                |\mathcal{C}_C| & \leq \exp\l(1000\cdot (d+1) \cdot \left((d+1)!\right)^3 \cdot |C| \cdot \hat{\tau}_C\cdot \log\left(\frac{1}{\varepsilon}\right)\cdot \log\left(\frac{1}{\hat{\tau}_C}\right)\r) \\
                & \le \exp\l(c_2 \cdot \l(q/\min\left\{k-1,~1\right\} \r)^{\frac{d}{d+1}} \log q \r),
            \end{align*} 
            where $c_2$ is a large constant depending on $d$ and the inequality follows from the definitions of $\hat{\tau}_C$ and $k_C$, and
        \item for every $C' \in \mathcal{C}_C$ the induced subgraph of $\mathcal{H}_C$ on $C'$ has size at most $\varepsilon|\mathcal{H}_C|$.
    \end{enumerate} Having constructed $\mathcal{C}_C$ for each $C \in \mathcal{C}_{j}^{>}$, we define
    \begin{equation*}
        \mathcal{C}_{j+1} = \mathcal{C}_{j}^{\le}~\cup \bigcup_{C \in \mathcal{C}_{j}^{>}} \mathcal{C}_{C}, 
    \end{equation*} and repeat this process to $\mathcal{C}_{j+1}$. Recall that our iteration stops with $\mathcal{C} = \mathcal{C}_{j}$ when $\mathcal{C}_{j}^{>} = \emptyset$.

    Similar to the proof of Theorem~\ref{thm:container1}, we can verify the statements (i) and (ii) claimed for our container collection $\mathcal{C}$. So it suffices for us to bound the size of the output $\mathcal{C}$. We claim that the iterative process above must stop after at most $x = d(d+1) \cdot \log q$ steps. Indeed, statement~(c) above implies that the induced subgraph of $\mathcal{H}_{q,d}$ on any $C\in \mathcal{C}_{x}^{>}$ has size at most $\varepsilon^{x} |\mathcal{H}_{q,d}|< e^{-x} \left(q^d\right)^{d+1} = 1$. On the other hand, Lemma~\ref{supersaturation_Fqd} implies that this induced subgraph contains at least one edge when $q$ is sufficiently large. This contradiction means the process should not continue beyond $x$ steps. Combining this fact with statement~(b) above, we obtain
    \begin{equation*}
        |\mathcal{C}|
        \leq \exp\l(c_2 \cdot \l(q/\min\left\{k-1,~1\right\} \r)^{\frac{d}{d+1}} \log q \cdot x\r)
        \le \exp\l(c \cdot \l(q/\min\left\{k-1,~1\right\} \r)^{\frac{d}{d+1}} \left(\log q\right)^2\r),
    \end{equation*} where the last inequality follows from $c$ being sufficiently large.
\end{proof}

\subsection{Proofs of Theorems~\ref{thm:counting1},~\ref{thm:counting2}, and~\ref{thm:counting3}}

We finish the proofs of our counting results using Theorems~\ref{thm:container1} and~\ref{thm:container2}. 
 
\begin{proof}[Proof of Theorem~\ref{thm:counting1}]
Let $c$ be the constant guaranteed by Theorem~\ref{thm:container2}. Fix $k = 1+q^{-\frac{1}{2d+1}}$ and since $q$ is sufficiently large, we have $k>1+cq^{-\frac{1}{d}}$. Let $\mathcal{C} = \mathcal{C}(q,k)$ be the container collection outputted by Theorem~\ref{thm:container2}. Note that $\min\left\{k-1,~1\right\} = q^{-\frac{1}{2d+1}}$. By the properties of $\mathcal{C}$ guaranteed by Theorem~\ref{thm:container2}, the number of general position sets in $\mathbb{F}_q^d$ is at most \begin{equation*}
    \sum_{C\in \mathcal{C}} 2^{|C|} 
    \le |\mathcal{C}| 2^{kq}
    \le \exp\l( c\l(q^{\frac{1}{2d+1}} \cdot q\r)^{\frac{d}{d+1}}(\log q)^2 + \l(1+q^{-\frac{1}{2d+1}}q \r) \r)
    = 2^{q + q^{\frac{2d}{2d+1}+o(1)}}. \qedhere
\end{equation*}
\end{proof}

\begin{proof}[Proof of Theorem~\ref{thm:counting2}]

Let $m$ be as given in our hypothesis and we assume $m \le 2q$ (otherwise the desired bound is trivial). Define \begin{equation*}
        \epsilon = m^{-\frac{d+1}{2d+1}}q^{\frac{d}{2d+1}}(\log q)^{\frac{2d+2}{2d+1}}.
    \end{equation*}
Using this definition and the bounds on $m$, we can check that \begin{equation*}
    \left(\frac{q}{\epsilon}\right)^{\frac{d}{d+1}} (\log q)^2 = \epsilon m
    \quad \text{and} \quad
    q^{-\frac{1}{2d+1}}\left(\frac{\log q}{2}\right)^{\frac{d+1}{2d+1}}\le \epsilon \le \left(\log q\right)^{-\frac{d+1}{2d+1}}.
\end{equation*} Hence we set $k = 1+ \epsilon$ and notice that $k > 1+cq^{-1/d}$ and $\min\left\{k-1,~1\right\} = \epsilon$ for sufficiently large $q$. Let $\mathcal{C} = \mathcal{C}(q,k)$ be the container collection outputted by Theorem~\ref{thm:container2}. By Theorem~\ref{thm:container2}~(iii) and the identity above, we have
\begin{equation*}
    |\mathcal{C}| 
    \le \exp\l(c \left(\frac{q}{\epsilon}\right)^{\frac{d}{d+1}}(\log q)^2\r)
    = \exp\l(c \epsilon m\r),
\end{equation*} where $c$ is the constant guaranteed by Theorem~\ref{thm:container2}. By the other properties of $\mathcal{C}$, the number of general position sets of size $m$ in $\mathbb{F}_q^d$ is at most \begin{equation*}
    \sum_{C\in \mathcal{C}} \binom{|C|}{m}
    \le |\mathcal{C}| \binom{kq}{m}
    \le \exp\l(c \epsilon m\r) \binom{(1+\epsilon)q}{m}
    \le \binom{2^{c \epsilon}(1+\epsilon)q}{m}.
\end{equation*}
Here we used the the inequality $\binom{2^{c \epsilon}(1+\epsilon)q}{m}/\binom{(1+\epsilon)q}{m} \ge \left(\frac{2^{c \epsilon}(1+\epsilon)q}{(1+\epsilon)q}\right)^m =\exp\l(c \epsilon m\r)$. By the bound $\epsilon \le \left(\log q\right)^{-\frac{d+1}{2d+1}}$, we can argue $2^{c \epsilon}(1+\epsilon)q = (1+o(1))q$ and this concludes our proof.
\end{proof}

\begin{proof}[Proof of Theorem~\ref{thm:counting3}]
Let $m$ be as given in our hypothesis and $\mathcal{C}$ be the container collection outputted by Theorem~\ref{thm:container1}. Similar to the proof of Theorem~\ref{thm:counting2}, the number of general position sets of size $m$ in $\mathbb{F}_q^2$ is at most \begin{equation*}
    \sum_{C\in \mathcal{C}} \binom{|C|}{m}
    \le |\mathcal{C}| \binom{9q}{m}
    \le \exp\l(cq^{1/2}(\log q)^2\r) \binom{9q}{m}
    \le \binom{2^c \cdot 9q}{m},
\end{equation*} where $c$ is the constant guaranteed by Theorem~\ref{thm:container1}. Here we used the inequality that $\binom{2^c \cdot 9q}{m} / \binom{9q}{m} \ge \left(\frac{2^c \cdot 9q}{9 q}\right)^m \ge \exp\left(cq^{1/2}(\log q)^2\right)$. 
\end{proof}

\section{Random Tur{\'a}n results in \texorpdfstring{$\mathbb{F}_q^d$}{}}\label{sec:randomturan2}

We aim to prove Theorem~\ref{thm:randomturan2} in this section.

\subsection{Supersaturation for critical coplanar tuples in \texorpdfstring{$\mathbb{F}_q^d$}{}}
We say a coplanar $(d+1)$-tuple in $\mathbb{F}_{q}^d$ is \textit{critical} if every proper subset of it is affinely independent. The main result of this subsection is the following supersaturation result for critical coplanar $(d+1)$-tuples. 

\begin{lemma}\label{critical_supersaturation_Fqd}
    For every integer $d\ge2$ there exists a constant $c>0$ such that the following holds for all prime power $q$. Suppose $P$ is a point set of size at least $4d q^{d-1}$ in $\mathbb{F}_{q}^d$, then $P$ contains at least $c|P|^{d+1}/q$ critical coplanar $(d+1)$-tuples.
\end{lemma}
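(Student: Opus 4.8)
The plan is to mimic the structure of the proof of Lemma~\ref{supersaturation_Fqd} but to be more careful so as to only count \emph{critical} coplanar tuples; the point of the stronger hypothesis $|P|\ge 4dq^{d-1}$ is precisely to guarantee that, after fixing a generic partial flag, enough choices remain to complete an affinely independent $d$-tuple, so that a typical coplanar completion is automatically critical. Concretely, I would build a critical coplanar $(d+1)$-tuple greedily: first choose an affinely independent ordered $d$-tuple $(v_1,\dots,v_d)$ from $P$ spanning a hyperplane $\pi$, and then choose $v_{d+1}\in (\pi\cap P)\setminus\{v_1,\dots,v_d\}$ so that \emph{no} $d$ of the $d+1$ points are affinely dependent. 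The first step has roughly $|P|^d$ choices up to lower-order corrections, since the number of ordered $d$-tuples that fail to be affinely independent is at most $d\cdot q^{d-2}\cdot|P|^{d-1}$ (a dependent $d$-tuple lies in a $(d-2)$-flat, of which there are few, each meeting $P$ in at most $|P|$ points once we bound $|F\cap P|$), and this is $o(|P|^d)$ exactly when $|P|\gg q^{d-1}$; the hypothesis on $|P|$ is tailored to make this error term small.

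For the second step, fix such an affinely independent $(v_1,\dots,v_d)$ spanning the hyperplane $\pi$. By Lemma~\ref{middle_lines} (applied with hyperplanes), all but a negligible fraction of hyperplanes $\pi$ satisfy $|\pi\cap P|\ge |P|/(2q)$, so for a positive-density set of the $(v_1,\dots,v_d)$ we have at least $|P|/(2q)-d$ candidates for $v_{d+1}$ in $\pi\cap P$. Among these candidates, $v_{d+1}$ is ``bad'' only if it makes some $d$-subset of $\{v_1,\dots,v_{d+1}\}$ affinely dependent: either $v_{d+1}$ lies on the $(d-2)$-flat spanned by some $d-1$ of the $v_i$'s (at most $d$ such flats, each meeting $\pi\cap P$ in at most $|\pi\cap P|$ points, but in fact, since these flats have dimension $d-2$ while $\pi$ has dimension $d-1$, we should intersect with $P$ globally and bound the count by a lower-order term using $|P|\gg q^{d-1}$ again — this requires the flat-intersection bound $|F_{d-2}\cap P|\le (\text{small})$), or $\{v_1,\dots,v_d\}$ itself is dependent, which we have excluded. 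Thus a constant fraction of the $|P|/(2q)$ candidates are good, giving $\Omega(|P|/q)$ valid choices of $v_{d+1}$.

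Multiplying, the number of ordered critical coplanar $(d+1)$-tuples is at least $\Omega(|P|^d)\cdot\Omega(|P|/q)=\Omega(|P|^{d+1}/q)$, and dividing by $(d+1)!$ to pass to unordered tuples gives the claimed bound $c|P|^{d+1}/q$. The main obstacle I anticipate is making the ``generic completion is critical'' argument quantitatively clean: I need a uniform upper bound of the form $|F\cap P|\le |P|^{o(1)}\cdot(\text{something like } q^{d-1})$ or at least $|F\cap P|=o(|P|/q)\cdot\text{(number of flags)}$ for lower-dimensional flats $F$, and to verify that the threshold $4dq^{d-1}$ is exactly what is needed for every error term (both the ``$(v_1,\dots,v_d)$ independent'' count and the ``$v_{d+1}$ good'' count) to be dominated by the main term. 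A secondary subtlety is that Lemma~\ref{middle_lines} controls how many hyperplanes are \emph{poor}, not which affinely independent $d$-tuples span poor hyperplanes, so I need to check that the poor hyperplanes collectively account for only a negligible fraction of \emph{all} affinely independent $d$-tuples — this follows since a poor hyperplane contains at most $\binom{|P|/(2q)}{d}\le (|P|/(2q))^d$ such $d$-tuples and there are at most $8q^{d+1}/|P|$ poor hyperplanes, whose product is again $o(|P|^d)$ in the stated range of $|P|$.
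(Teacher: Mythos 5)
Your greedy construction --- pick an affinely independent ordered $d$-tuple from $P$ spanning a hyperplane $\pi$, then complete with $v_{d+1}\in\pi\cap P$ avoiding the $d$ many $(d-2)$-flats spanned by the $(d-1)$-subsets --- is the same combinatorial core as the paper's, and it correctly locates where the threshold $|P|\ge 4dq^{d-1}$ enters: one needs $|P|/(2q)$ to beat $dq^{d-2}$ so a generic completion escapes those flats, and the \emph{trivial} size bound $|F\cap P|\le q^{\dim F}\le q^{d-2}$ is all that is needed, so your closing worry about requiring a sharper flat-intersection estimate is unfounded. The genuine difference is in the aggregation. The paper double-counts point--hyperplane incidences to get $\sum_{\pi}|P\cap\pi|\ge q^{d-1}|P|$, encodes the per-hyperplane greedy count as a single convex function $f$, and finishes with Jensen's inequality; this is entirely elementary and makes no use of Lemma~\ref{middle_lines}. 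You instead invoke the spectral Lemma~\ref{middle_lines} to discard the $d$-tuples that span ``poor'' hyperplanes, which also works once you split poor hyperplanes into the few-point ones (the only ones you need to subtract, and they carry few $d$-tuples) and the many-point ones (harmless, so you should not be discarding them). Two of your estimates need tightening: the quantities you dismiss as $o(|P|^d)$ are really only $O(|P|^d)$ with a constant below $1$ when $|P|$ is near $4dq^{d-1}$ (for $d=2$ and $|P|=8q$ the few-point-line pairs amount to roughly $|P|^2/4$), which still suffices but must be computed rather than waved away; and your parenthetical justification for the dependent-$d$-tuple count $d|P|^{d-1}q^{d-2}$ is muddled even though the number itself is right (fix $d-1$ of the points, the last must lie in the at-most-$(d-2)$-dimensional flat they span). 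In sum your route is correct but heavier than the paper's, which stays within double-counting and convexity and avoids the eigenvalue machinery altogether.
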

\begin{proof}
    Let $P$ be as given in our hypothesis, $N = q^{d-k} \binom{d}{k}_q = (1+o(1))q^d$ (see Lemma~\ref{flat_counting}), and label all hyperplanes in $\mathbb{F}_q^d$ as $\pi_1,\pi_2,\dots,\pi_N$. A simple double-counting and Lemma~\ref{flat_counting} give 
    \begin{equation*}
        \sum_{i=1}^N |P \cap \pi_i| 
        = \sum_{v \in P} \text{ number of hyperplanes containing $v$ }
        = \binom{q}{d-1}_q|P|
        \geq q^{d-1}|P|.
    \end{equation*}
    
    Define the function
    \begin{equation*}
        f(x):=
        \begin{cases}
            \frac{1}{(d+1)!} x \cdot \prod_{i=0}^{d-2}\left(x - q^{i}\right) \cdot \left(x - d q^{d-2}\right), & \quad\text{if}\quad x \ge 2 d q^{d-2}, \\
            0, & \quad\text{if}\quad x < 2 d q^{d-2}. 
        \end{cases}
    \end{equation*} By choosing points one by one, it is easy to see that if a hyperplane $\pi$ contains $x$ points from $P$, then there are at least $f(x)$ critical coplanar $(d+1)$-tuples in $P\cap \pi$. Therefore, it follows from the convexity of $f(x)$ and Jensen's inequality that the number of critical coplanar $(d+1)$-tuples in $P$ is at least
    \begin{align*}
        \sum_{i=1}^N f\left(|P \cap \pi_i|\right) 
        &\geq N \cdot f\l(\frac{\sum_{i=1}^N|P \cap \pi_i|}{N}\r)
          \geq N \cdot f\l(\frac{q^{d-1}|P|}{N}\r) 
        \geq q^d \cdot f\l(\frac{|P|}{2q}\r) \\
        & \ge \frac{q^d}{(d+1)!}\left(\frac{|P|}{4q}\right)^{d+1} = \frac{1}{4^{d+1}(d+1)!} \cdot \frac{|P|^{d+1}}{q}. \qedhere
    \end{align*}
\end{proof}

We remark that the number of critical coplanar $(d+1)$-tuples guaranteed by Lemma~\ref{critical_supersaturation_Fqd} is tight up to some constant factor. To see this, consider a $p$-random set $\textbf{S}_p$ of $\mathbb{F}_{q}^d$. By Chernoff's bound, a.a.s. $|\textbf{S}_p| = (1+o(1))pq^{d}$ as long as $p \gg 1/q^d$. Since the number of coplanar $(d+1)$-tuples inside $\mathbb{F}_q^d$ is $(1+o(1))q^{d+(d-1)(d+1)}$ (because there are $q^d$ hyperplanes in $\mathbb{F}_q^d$ and
each one contains $(1+o(1))q^{(d-1)(d+1)}$ coplanar $(d+1)$-tuples), the number of coplanar $(d+1)$-tuples in $\textbf{S}_p$ is a.a.s. $(1+o(1))q^{d+(d-1)(d+1)}p^{d+1} = O\left({|\textbf{S}_p|^{d+1}}/{q}\right)$.

We also prove a refined version of Lemma~\ref{critical_supersaturation_Fqd} which is more handy in later computations. We use $\mathcal{H}'_{q,d}$ to denote the $(d+1)$-graph whose vertices are all the points of $\mathbb{F}_{q}^d$ and edges are all the critical coplanar $(d+1)$-tuples.

\begin{lemma}\label{refined_critical_supersaturation_Fqd}
    For every integer $d\ge 2$ there exists a constant $c>0$ such that the following holds for all prime power $q$. Suppose $P$ is a point set of size at least $4dq^{d-1}$ in $\mathbb{F}_{q}^d$, then the induced subgraph of $\mathcal{H}'_{q,d}$ on $P$ is $(c, \tau)$-bounded, where
    \begin{equation*}
        \tau = \max\l\{ k^{-1} q^{\frac{1}{d}-1},~ k^{-2}q^{d-2}\r\}
        \quad\text{and}\quad 
        k= {|P|}/{q}.
    \end{equation*}    
\end{lemma}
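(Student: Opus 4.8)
The plan is to mirror the proof of Lemma~\ref{refined_supersaturation_Fqd}. Write $\mathcal{H} = \mathcal{H}'_{q,d}[P]$ and $k = |P|/q$. By Lemma~\ref{critical_supersaturation_Fqd} there is a constant $c_1 > 0$ (depending only on $d$) with $|\mathcal{H}| \ge c_1 |P|^{d+1}/q$, so it remains to upper bound each codegree $\Delta_i(\mathcal{H})$ for $2 \le i \le d+1$ and then verify $\Delta_i(\mathcal{H}) \le \frac{c|\mathcal{H}|}{|P|}\tau^{i-1}$.

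The crucial step is a codegree estimate. Fix an $i$-subset $U \subseteq P$ with $2 \le i \le d$ and $\delta(U) > 0$. Since every critical coplanar $(d+1)$-tuple has all of its proper subsets affinely independent, $U$ itself is affinely independent. Moreover, if $T \supseteq U$ is a critical coplanar $(d+1)$-tuple, then $T$ lies in a unique hyperplane $\pi$, and by criticality again every $d$-subset of $T$ spans $\pi$; hence, having chosen any $d-i$ of the remaining $d+1-i$ points of $T$, the hyperplane $\pi$ is already determined and the last point must lie in $\pi \cap P$. Counting these choices gives $\Delta_i(\mathcal{H}) \le \binom{|P|}{d-i}\, q^{d-1} \le |P|^{d-i}q^{d-1}$ for $2 \le i \le d$, while trivially $\Delta_{d+1}(\mathcal{H}) \le 1$.

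Plugging these bounds into the definition of $(c,\tau)$-boundedness and using $|\mathcal{H}| \ge c_1 |P|^{d+1}/q$, one computes $\Delta_i(\mathcal{H})\big/\!\left(|\mathcal{H}|/|P|\right) \le c_1^{-1} k^{-i} q^{d-i}$ for $2 \le i \le d$. It then suffices to check $c_1^{-1}k^{-i}q^{d-i} \le c\,\tau^{i-1}$, which I would do by splitting into the two regimes defining the $\max$. When $k \le q^{d-1-1/d}$ we have $\tau = k^{-2}q^{d-2}$ and the inequality reduces to $(k/q^{d-1})^{i-2}$ being bounded by a constant, which holds since $|P| \le q^d$ forces $k \le q^{d-1}$. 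When $k \ge q^{d-1-1/d}$ we have $\tau = k^{-1}q^{1/d-1}$ and the inequality reduces to $k$ being at least a constant times $q^{d-1-(i-1)/d}$, which holds since $k \ge q^{d-1-1/d} \ge q^{d-1-(i-1)/d}$ for $i \ge 2$. For the top codegree $i = d+1$, one instead uses $\tau \ge k^{-1}q^{1/d-1}$ directly to get $\frac{|\mathcal{H}|}{|P|}\tau^d \ge c_1 k^d q^{d-1}(k^{-1}q^{1/d-1})^d = c_1$, which absorbs $\Delta_{d+1}(\mathcal{H}) \le 1$. Taking $c$ sufficiently large in terms of $d$ and $c_1$ finishes the argument.

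I expect the only genuinely substantive step to be the codegree estimate — specifically the observation that once $d-i$ of the ``free'' vertices of a critical edge are fixed, the remaining vertex is confined to a single hyperplane; this is exactly where criticality (every $d$-subset of an edge being affinely independent and hence spanning the ambient hyperplane) is used, and it is what distinguishes this lemma from Lemma~\ref{refined_supersaturation_Fqd}. The $\max$ in the definition of $\tau$ is not an artifact: the $k^{-2}q^{d-2}$ term is dictated by the relative weakness of the codegree bound when $k$ is small, whereas the $k^{-1}q^{1/d-1}$ term is precisely what is needed to absorb $\Delta_{d+1}(\mathcal{H}) \le 1$ when $k$ is as large as $q^{d-1}$. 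The remaining two-regime arithmetic is routine.
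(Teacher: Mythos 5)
Your proposal is correct and follows essentially the same route as the paper: the identical codegree bound $\Delta_i(\mathcal{H}) \le |P|^{d-i}q^{d-1}$ for $2\le i\le d$ (using that $d$ points of a critical tuple already span the hyperplane), the trivial bound $\Delta_{d+1}\le 1$, and then absorbing via the lower bound from Lemma~\ref{critical_supersaturation_Fqd}. The only cosmetic difference is that for $2\le i\le d$ you split into the two regimes of the $\max$, whereas the paper observes that the single estimate $\tau\ge k^{-2}q^{d-2}$ suffices for all $i\le d$ (using $|P|\le q^d$), reserving $\tau\ge k^{-1}q^{1/d-1}$ only for $i=d+1$; both verifications are valid.
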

\begin{proof}
    Let $P$ be as given in our hypothesis, $\mathcal{H} = \mathcal{H}'_{q,d}[P]$ be the induced subgraph, and $c_1>0$ be the constant guaranteed by Lemma~\ref{critical_supersaturation_Fqd}.
    
    Let $2\leq j\leq d$ be an integer and $v_1, \ldots, v_j \in P$ be affinely independent. To upper bound $\Delta_j(\mathcal{H})$ it suffices to count the number of choices for points $v_{j+1}, \ldots, v_{d+1} \in P$ such that $\{v_1, \ldots, v_{d+1}\}$ is a critical $(d+1)$-tuple. We use the trivial upper bound $|P|$ for each point in $\{v_{j+1}, \ldots, v_{d}\}$. And once they are chosen the $(d+1)$-th point must lie in the unique $(d-1)$-flat containing $\{v_1, \ldots, v_d\}$, so there are at most $q^{d-1}$ choices for $v_{d+1}$. Combined with the fact $|P|\leq q^d$ and Lemma~\ref{critical_supersaturation_Fqd}, we can compute
    \begin{equation*}
        \Delta_j(\mathcal{H}) \le |P|^{d-j} q^{d-1} \leq \left(\frac{q^d}{|P|}\right)^{j-2}|P|^{d-j} q^{d-1}=\frac{1}{c_1}\cdot c_1\frac{|P|^d}{q}\cdot \l( \frac{q^{d-2}}{k^2} \r)^{j-1}\leq \frac{1}{c_1}\cdot \frac{|\mathcal{H}|}{|P|} \tau^{j-1}.
    \end{equation*}

    To get the upper bound of $\Delta_{d+1}(\mathcal{H})$, notice that $\Delta_{d+1}(\mathcal{H}) \leq 1$, so we have
    \begin{equation*}
        \Delta_{d+1}(\mathcal{H}) \leq 1
        \le \frac{1}{c_1} \cdot c_1\frac{|P|^d}{q} \left(\frac{1}{k q^{1-\frac{1}{d}}}\right)^{d}
        \le \frac{1}{c_1} \cdot \frac{|\mathcal{H}|}{|P|} \tau^{d}. \qedhere
    \end{equation*}
\end{proof}

\subsection{The third container theorem}
We prove the following container theorem for general position sets in $\mathbb{F}^d_q$ using Lemmas~\ref{critical_supersaturation_Fqd} and~\ref{refined_critical_supersaturation_Fqd}. 

\begin{theorem}\label{thm:container3}
    For every integer $d\ge 2$ there exists a constant $c>0$ such that for sufficiently large prime power $q$ and real number $k \in [4dq^{d-2}, q^{d-1-\frac{1}{d}}]$, we have a collection $\mathcal{C}=\mathcal{C}(q,k)$ of point sets (called containers) in $\mathbb{F}^d_q$ satisfying
    \begin{enumerate}[(i)]
        \item every general position set of $\mathbb{F}^d_q$ is contained in some member of $\mathcal{C}$, 
        \item $|C|\le kq$ for all $C\in \mathcal{C}$, and
        \item $|\mathcal{C}|\le \exp \l(ck^{-1}q^{d-1}(\log q)^2\r)$.
    \end{enumerate}
\end{theorem}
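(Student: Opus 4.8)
The plan is to mimic the proofs of Theorems~\ref{thm:container1} and~\ref{thm:container2}, replacing the supersaturation input by Lemma~\ref{refined_critical_supersaturation_Fqd} (which concerns critical coplanar $(d+1)$-tuples rather than all coplanar ones). The key point is that every general position set is an independent set of $\mathcal{H}'_{q,d}$ (it contains no coplanar $(d+1)$-tuple, in particular no critical one), so an iterative container construction for $\mathcal{H}'_{q,d}$ will capture all general position sets, which gives (i).

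First I would set $\mathcal{C}_0=\{\mathbb{F}_q^d\}$, $\varepsilon=1/e$, and repeatedly process each $C$ with $|C|>kq$. For such a $C$ put $k_C=|C|/q$, let $\tau_C=\max\{k_C^{-1}q^{1/d-1},\,k_C^{-2}q^{d-2}\}$ be the value from Lemma~\ref{refined_critical_supersaturation_Fqd}, and set $\hat\tau_C=c_1\cdot 2^{\binom{d+1}{2}}\cdot 12(d+1)!\cdot\tau_C$, exactly as in the proof of Theorem~\ref{thm:container2}. Since $|C|>kq\ge 4dq^{d-1}$, Lemma~\ref{refined_critical_supersaturation_Fqd} gives that $\mathcal{H}'_{q,d}[C]$ is $(c_1,\tau_C)$-bounded, and the same routine computation as before yields $\Delta(\mathcal{H}'_{q,d}[C],\hat\tau_C)\le \varepsilon/(12(d+1)!)$. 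One also checks $\hat\tau_C\le 1/(200(d+1)(d+1)!)$ for large $q$, using the upper constraint $k\le q^{d-1-1/d}$ to ensure $\tau_C$ is genuinely small (this is where the hypothesis $k\le q^{d-1-1/d}$ enters: it forces $k_C^{-2}q^{d-2}$ and $k_C^{-1}q^{1/d-1}$ to be $o(1)$). Applying Lemma~\ref{hypergraphcontainer} to $\mathcal{H}'_{q,d}[C]$ produces $\mathcal{C}_C$ with $\log|\mathcal{C}_C|\le 1000(d+1)((d+1)!)^3|C|\hat\tau_C\log(1/\varepsilon)\log(1/\hat\tau_C)$; substituting $|C|\hat\tau_C=O(q k_C \tau_C)=O(\max\{q^{1/d},k_C^{-1}q^{d-1}\})=O(k_C^{-1}q^{d-1})$ (the second term dominates since $k_C\le q^{d-1-1/d}$) bounds $\log|\mathcal{C}_C|=O(k^{-1}q^{d-1}\log q)$, using $k_C\ge k$. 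Replace $C$ by $\mathcal{C}_C$ and iterate; property~(ii) holds since the final containers all have size at most $kq$.

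To bound the number of iteration steps I would argue exactly as in Theorem~\ref{thm:container2}: property~(c) of the container lemma forces the number of critical coplanar tuples inside a surviving container to shrink by a factor $\varepsilon$ per step, so after $x=O(d^2\log q)$ steps this count drops below $1$; but Lemma~\ref{critical_supersaturation_Fqd} guarantees at least one such tuple whenever the container still has more than $4dq^{d-1}$ points, a contradiction. Multiplying the per-step bound by $x$ gives $|\mathcal{C}|\le\exp(O(k^{-1}q^{d-1}(\log q)^2))$, which is~(iii).

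The main obstacle I anticipate is purely bookkeeping: verifying that the $\max$ in $\tau_C$ behaves well across the whole admissible range $k\in[4dq^{d-2},q^{d-1-1/d}]$, i.e.\ confirming that $\hat\tau_C$ satisfies the smallness hypothesis of Lemma~\ref{hypergraphcontainer} at \emph{both} ends, and that the crossover between the two terms $k_C^{-1}q^{1/d-1}$ and $k_C^{-2}q^{d-2}$ never spoils the claimed bound $|C|\hat\tau_C=O(k^{-1}q^{d-1})$; since $k_C^{-1}q^{1/d-1}\le k_C^{-2}q^{d-2}$ iff $k_C\le q^{d-1-1/d}$, the upper endpoint of the range is exactly the regime where the first term could matter, and there one still has $q^{1/d}\le k^{-1}q^{d-1}$ in the relevant range, so the estimate survives. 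The rest is a transcription of the earlier argument with $\mathcal{H}_{q,d}$ replaced by $\mathcal{H}'_{q,d}$.
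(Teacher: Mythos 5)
Your proposal follows essentially the same route as the paper's proof of Theorem~\ref{thm:container3}: you set up the same iterated container construction with $\mathcal{C}_0=\{\mathbb{F}_q^d\}$, choose $\tau_C$ and $\hat\tau_C$ exactly as the paper does, invoke Lemma~\ref{refined_critical_supersaturation_Fqd} for $(c_1,\tau_C)$-boundedness of $\mathcal{H}'_{q,d}[C]$, verify the two hypotheses of Lemma~\ref{hypergraphcontainer}, bound the per-step container count by $\exp(O(k^{-1}q^{d-1}\log q))$, and terminate after $O(d^2\log q)$ iterations via Lemma~\ref{critical_supersaturation_Fqd}. The only imprecision is the passing claim that $k_C\le q^{d-1-1/d}$ (false at the start of the iteration when $k_C$ is close to $q^{d-1}$), but you correctly note in your final paragraph that what actually matters is $q^{1/d}\le k^{-1}q^{d-1}$, which holds since $k\le q^{d-1-1/d}$; the paper handles this by first using $k_C\ge k$ to pass from $q^{d-2}/k_C$ to $q^{d-2}/k$ before taking the maximum.
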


\begin{proof}
    Let $k \in [4dq^{d-2}, q^{d-1-\frac{1}{d}}]$ be fixed. We set $\mathcal{C}_{0} = \{\mathbb{F}_q^d\}$ and $\varepsilon = 1/e$. Suppose that $\mathcal{C}_j$ has been defined for some integer $j \ge 0$. We define $\mathcal{C}_{j}^{>}$ and $\mathcal{C}_{j}^{\le}$ in the same way as the proof of Theorem~\ref{thm:container2}. If $\mathcal{C}_{j}^{>} = \emptyset$, then we let $\mathcal{C} = \mathcal{C}_j$. Otherwise, for each $C\in \mathcal{C}_{j}^{>}$ we set
    \begin{equation*}
    k_C = {|C|}/{q},\quad
       \tau_{C} = \max\l\{k_C^{-1} q^{\frac{1}{d}-1},~ k_C^{-2}q^{d-2}\r\}, 
        \quad\text{and}\quad \hat{\tau}_C = c_1 \cdot 2^{\binom{d+1}{2}}\cdot 12(d+1)! \cdot \tau_C, 
    \end{equation*}
    where $c_1>0$ is the constant guaranteed by Lemma~\ref{refined_critical_supersaturation_Fqd}. Furthermore, let $\mathcal{H}_C = \mathcal{H}'_{q,d}[C]$ be the induced subgraph of $\mathcal{H}'_{q,d}$ on $C$, and recall that $\mathcal{H}'_{q,d}$ is the hypergraph defined by coplanar $(d+1)$-tuples in $\mathbb{F}_{q}^d$. Since $|C| > kq \ge 4dq^{d-1}$, it follows from Lemma~\ref{refined_critical_supersaturation_Fqd} that $\mathcal{H}_C$ is $(c_1, \tau_C)$-bounded. Similar to the proof of Theorem~\ref{thm:container2}, we can check that $\Delta\left(\mathcal{H}_C,\hat{\tau}_C\right) \leq \frac{\varepsilon}{12(d+1)!}$. In addition, since $q$ is sufficiently large, we have $\hat{\tau}_C \le \frac{1}{200\cdot (d+1) \cdot (d+1)!}$. It follows from Lemma~\ref{hypergraphcontainer}, applied to $\mathcal{H}_C$, that there exists a collection $\mathcal{C}_{C}$ of subsets of $C$ such that
    \begin{enumerate}[(a)]
        \item every independent set of $\mathcal{H}_C$ is contained in some member of $\mathcal{C}_C$,
        \item the size of $\mathcal{C}_C$ satisfies 
            \begin{align*}
                |\mathcal{C}_C|
                & \leq \exp\l( 1000\cdot (d+1) \cdot \left((d+1)!\right)^3 \cdot |C| \cdot \hat{\tau}_C\cdot \log\left(\frac{1}{\varepsilon}\right)\cdot \log\left(\frac{1}{\hat{\tau}_C}\right) \r) \\
                & \le \exp\l( c_2 \cdot \max\l\{\frac{1}{q^{1-\frac{1}{d}}},~\frac{q^{d-2}}{k_C}\r\} q \log q \r)\\
                & \le \exp\l( c_2 \cdot \max\l\{\frac{1}{q^{1-\frac{1}{d}}},~\frac{q^{d-2}}{k}\r\} q \log q \r)\\
                & \le \exp\l( c_2 k^{-1}q^{d-1}\log q \r),
            \end{align*} 
            where $c_2$ is a large constant depending on $d$ and the inequality follows from the definitions of $\hat{\tau}_C$ and $k_C$, and
        \item for every $C' \in \mathcal{C}_C$ the induced subgraph of $\mathcal{H}_C$ on $C'$ has size at most $\varepsilon|\mathcal{H}_C|$.
    \end{enumerate} Having constructed $\mathcal{C}_C$ for each $C \in \mathcal{C}_{j}^{>}$, we define $\mathcal{C}_{j+1}$ and proceed with the iteration in the same way as in the proofs of Theorems~\ref{thm:container1} and~\ref{thm:container2}.

    Finally, we can verify the statements (i) and (ii) claimed for our final container collection $\mathcal{C}$ similarly to the proof of Theorem~\ref{thm:container1}. Moreover, similar to the argument in the proof of Theorem~\ref{thm:container2}, using statements~(c) above and Lemma~\ref{critical_supersaturation_Fqd}, we can show that the iterative process above must stop after at most $d(d+1)\cdot\log q$ steps. Combining this fact with statement~(b) above, we obtain $|\mathcal{C}|\le \exp \l(ck^{-1}q^{d-1}(\log q)^2\r)$ with $c$ being a large constant depending on $d$. Hence we conclude the proof.
\end{proof}

\subsection{Proof of Theorem \ref{thm:randomturan2}}

We finish the proof of Theorem \ref{thm:randomturan2} in this subsection. First, we prove the following upper bounds for $\alpha(\mathbb{F}_q^d,p)$ with $p \leq q^{-d+\frac{2d}{d+1}+o(1)}$ using Theorem~\ref{thm:container3}.

\begin{proposition}\label{randomturan2_upper1}
For every integer $d\ge 2$, there exists a constant $c>0$ such that a.a.s.
\begin{equation*}
    \alpha(\mathbb{F}^d_q,p)
        \le
        \begin{cases}
            cq^{\frac{1}{d}}(\log q)^2, &\quad\text{if}\quad 0\le p\le q^{-d+\frac{2}{d}}(\log q)^2,\\
            cp^{\frac{1}{2}}q^{\frac{d}{2}}\log q, &\quad\text{if}\quad q^{-d+\frac{2}{d}}(\log q)^2\le p\le q^{-d+\frac{2d}{d+1}}(\log q)^2.
        \end{cases}
\end{equation*}
\end{proposition}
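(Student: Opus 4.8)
The plan is to feed the container collection from Theorem~\ref{thm:container3} into a standard first-moment-plus-Markov argument, exactly as in the proof of Proposition~\ref{randomturan1_upper}. Let $\mathbf{S}_p$ be a $p$-random set of $\mathbb{F}_q^d$ and let $\mathbf{X}_m(p)$ be the number of general position sets of size $m$ contained in $\mathbf{S}_p$. For any admissible $k\in[4dq^{d-2},\,q^{d-1-1/d}]$, Theorem~\ref{thm:container3} supplies a collection $\mathcal{C}=\mathcal{C}(q,k)$ with $|C|\le kq$ for all $C\in\mathcal{C}$, with $|\mathcal{C}|\le\exp(ck^{-1}q^{d-1}(\log q)^2)$, and such that every general position set lies inside some container. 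Since each general position $m$-set in $\mathbf{S}_p$ is an $m$-subset of $C\cap\mathbf{S}_p$ for some $C\in\mathcal{C}$, linearity of expectation gives
\begin{equation*}
    \mathbb{E}[\mathbf{X}_m(p)]\le\sum_{C\in\mathcal{C}}\binom{|C|}{m}p^m\le|\mathcal{C}|\binom{kq}{m}p^m\le\exp\!\l(ck^{-1}q^{d-1}(\log q)^2\r)\l(\frac{ekqp}{m}\r)^m.
\end{equation*}
If the right-hand side tends to $0$, then Markov's inequality yields $\Pr[\alpha(\mathbb{F}_q^d,p)\ge m]=\Pr[\mathbf{X}_m(p)\ge1]\le\mathbb{E}[\mathbf{X}_m(p)]\to0$, i.e.\ $\alpha(\mathbb{F}_q^d,p)<m$ a.a.s. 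So everything reduces to choosing $k$ and $m$ to make both factors small.

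For the first range $0\le p\le q^{-d+2/d}(\log q)^2$ I would take $k=q^{d-1-1/d}$, the top of the admissible window, and $m=Cq^{1/d}(\log q)^2$ for a sufficiently large constant $C=C(d)$. Then $k^{-1}q^{d-1}=q^{1/d}$, so the container-count factor is $\exp(cq^{1/d}(\log q)^2)$, while $kqp=q^{d-1/d}p\le q^{1/d}(\log q)^2\le m/(2e)$, so the weight factor is at most $2^{-m}$; taking $C$ larger than the constant $c$ makes the product $\exp((c-C)q^{1/d}(\log q)^2)\to0$. (One can either invoke monotonicity of $\mathbb{E}[\mathbf{X}_m(p)]$ in $p$, or simply use $p^m\le(q^{-d+2/d}(\log q)^2)^m$, to cover all smaller $p$ at once.) For the second range $q^{-d+2/d}(\log q)^2\le p\le q^{-d+2d/(d+1)}(\log q)^2$ I would balance the two factors: setting $k=q^{d/2-1}(\log q)\,p^{-1/2}$ makes $k^{-1}q^{d-1}(\log q)^2=p^{1/2}q^{d/2}\log q$ and $kqp=p^{1/2}q^{d/2}\log q$ as well, so with $m=Cp^{1/2}q^{d/2}\log q$ we get $\mathbb{E}[\mathbf{X}_m(p)]\le\exp(cp^{1/2}q^{d/2}\log q)\,(e/C)^m\le\exp((c-C)p^{1/2}q^{d/2}\log q)$, which tends to $0$ for $C$ large since $p^{1/2}q^{d/2}\log q\ge q^{1/d}(\log q)^2\to\infty$ throughout this range.

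The step I expect to demand the most care is verifying that the chosen parameter $k$ always lies in the admissible window $[4dq^{d-2},\,q^{d-1-1/d}]$ of Theorem~\ref{thm:container3}; this exponent bookkeeping is in fact what pins down the left endpoint $p=q^{-d+2/d}(\log q)^2$ separating the two sub-ranges. Indeed, for $k=q^{d/2-1}(\log q)p^{-1/2}$ the inequality $k\le q^{d-1-1/d}$ is equivalent to $p\ge q^{-d+2/d}(\log q)^2$, while $k\ge 4dq^{d-2}$ is equivalent to $p\le(\log q)^2/(16d^2q^{d-2})$, which holds for large $q$ throughout the second range because $(d-2)-d(d-1)/(d+1)=-2/(d+1)<0$; likewise $k=q^{d-1-1/d}$ is admissible for large $q$ since $q^{1-1/d}\ge 4d$. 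One also checks the harmless inequality $m\le kq$ (in the second range $m/(kq)=Cp\to0$) so that the binomial coefficients behave as used. Everything else is the routine container-plus-union-bound template, so I do not anticipate further obstacles.
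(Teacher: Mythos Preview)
Your proposal is correct and matches the paper's proof essentially line for line: the paper also applies Theorem~\ref{thm:container3} with $k=p^{-1/2}q^{d/2-1}\log q$ and $m=cpkq=cp^{1/2}q^{d/2}\log q$ in the second range, checks that $k$ lands in $[4dq^{d-2},q^{d-1-1/d}]$ exactly as you do, and then handles the first range via monotonicity of $\mathbb{E}[\mathbf{X}_m(p)]$ (your alternative of plugging in $k=q^{d-1-1/d}$ directly is the same computation evaluated at the boundary). The exponent bookkeeping you flagged as the delicate point is precisely what the paper verifies, and your computation $(d-2)-d(d-1)/(d+1)=-2/(d+1)$ is correct.
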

\begin{proof}
Let $\mathbf{S}_p$ be a $p$-random set of $\mathbb{F}_{q}^d$ and $\mathbf{X}_m(p)$ denote the number of general position sets of size $m$ in $\mathbf{S}_p$. Also let $c >0$ be the constant guaranteed by Theorem~\ref{thm:container3}. By enlarging $c$ if necessary, we can assume $c \ge 4e$ without loss of generality.

First we consider the range $q^{-d+\frac{2}{d}}(\log q)^2\le p\le q^{-d+\frac{2d}{d+1}}(\log q)^2$. We set $k = p^{-\frac{1}{2}} q^{\frac{d}{2}-1} \log q$ and $m = cpkq$. We can check that\begin{equation*}
    4dq^{d-2} \ll q^{d-2+\frac{1}{d+1}} \le k \le q^{d-1-\frac{1}{d}}.
\end{equation*} Let $\mathcal{C} = \mathcal{C}(q,k)$ be the container collection outputted by Theorem~\ref{thm:container3}. By the properties of $\mathcal{C}$ guaranteed by Theorem~\ref{thm:container3}, we can estimate
\begin{equation}\label{eq:randomturan2_upper1}
        \begin{aligned}
            \mathbb{E}[\mathbf{X}_m(p)] \le \sum_{C\in \mathcal{C}}\binom{|C|}{m} p^m
            \le |\mathcal{C}|\binom{kq}{m}p^m
            &\le \exp\left( c k^{-1}q^{d-1} (\log q)^2 \right) \left(\frac{ekq}{m}\right)^m p^m\\
            &\le \exp\l( c k^{-1}q^{d-1} (\log q)^2 + m \cdot \log\l(\frac{epkq}{m}\r) \r)\\
            &\le \exp\l( m+m \cdot \log\l(\frac{1}{4}\r)\r) \\
            &= \exp(-cp^{\frac{1}{2}}q^{\frac{d}{2}} \log q) \rightarrow 0 \quad \text{as}\quad q\rightarrow\infty.
        \end{aligned}
    \end{equation}
Same as in the proof of Proposition~\ref{randomturan1_upper}, we can use Markov's inequality to argue that a.a.s.\begin{equation*}
    \alpha(\mathbb{F}^d_q,p) \leq m = cp^{\frac{1}{2}}q^{\frac{d}{2}}\log q.
\end{equation*}
For $p\le q^{-d+\frac{2}{d}}(\log q)^2$, the argument is again similar to the proof of Proposition~\ref{randomturan1_upper}. Combining the ``nondecreasingness'' of $\mathbb{E}[\mathbf{X}_m(p)]$ and \eqref{eq:randomturan2_upper1}, we can argue that $\alpha(\mathbb{F}^d_q,p)<cq^{\frac{1}{d}} (\log q)^2$ a.a.s. in this range.
\end{proof}

Next, we use Theorem~\ref{thm:container2} to establish a different type of upper bounds for $\alpha(\mathbb{F}_q^d,p)$.
\begin{proposition}\label{randomturan2_upper2}
For every integer $d\ge 2$, there exists a constant $c>0$ such that a.a.s.
\begin{equation*}
    \alpha(\mathbb{F}^d_q,p)
    \le
    \begin{cases}
        cq^{\frac{d}{d+1}}(\log q)^2, &\quad\text{if}\quad p\le q^{-\frac{1}{d+1}}(\log q)^2,\\
        cpq, &\quad\text{if}\quad q^{-\frac{1}{d+1}}(\log q)^2\le p\le 1.
    \end{cases}
\end{equation*}
\end{proposition}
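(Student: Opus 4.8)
The plan is to mimic the proof of Proposition~\ref{randomturan1_upper}, now using Theorem~\ref{thm:container2} (the ``$(1+o(1))q$-sized containers'' version) in place of Theorem~\ref{thm:container1}. Let $\mathbf{S}_p$ be a $p$-random subset of $\mathbb{F}_q^d$ and let $\mathbf{X}_m(p)$ be the number of general position sets of size $m$ in $\mathbf{S}_p$. We want to choose, for each range of $p$, a target size $m$ and a container parameter $k$ (hence a slack $\min\{k-1,1\}$) so that the union bound $\mathbb{E}[\mathbf{X}_m(p)] \le |\mathcal{C}|\binom{kq}{m}p^m$ tends to $0$, and then conclude via Markov's inequality exactly as in Proposition~\ref{randomturan1_upper}. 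Recall Theorem~\ref{thm:container2} gives $|\mathcal{C}| \le \exp\bigl(c\,(q/\min\{k-1,1\})^{d/(d+1)}(\log q)^2\bigr)$ with $|C|\le kq$ for all $C \in \mathcal{C}$, valid whenever $k \ge 1 + cq^{-1/d}$.

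For the range $q^{-1/(d+1)}(\log q)^2 \le p \le 1$, set $m = cpq$ (enlarging $c$ so that $c \ge 4e$, say). The natural choice is to take $k$ bounded away from $1$, e.g. $k = 2$, so $\min\{k-1,1\} = 1$ and $|\mathcal{C}| \le \exp(c q^{d/(d+1)}(\log q)^2)$. Then
\begin{align*}
\mathbb{E}[\mathbf{X}_m(p)] &\le |\mathcal{C}|\binom{2q}{m}p^m \le \exp\Bigl(cq^{\frac{d}{d+1}}(\log q)^2 + m\log\bigl(\tfrac{2epq}{m}\bigr)\Bigr)\\
&\le \exp\Bigl(cq^{\frac{d}{d+1}}(\log q)^2 - m\Bigr),
\end{align*}
using $\log(2epq/m) = \log(2e/c) \le -1$. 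Since $p \ge q^{-1/(d+1)}(\log q)^2$ gives $m = cpq \ge cq^{d/(d+1)}(\log q)^2$, the exponent is at most $-\tfrac{c}{2}q^{d/(d+1)}(\log q)^2 \to -\infty$, so $\mathbb{E}[\mathbf{X}_m(p)] \to 0$ and Markov gives $\alpha(\mathbb{F}_q^d,p) < cpq$ a.a.s. For the range $0 \le p \le q^{-1/(d+1)}(\log q)^2$, set $m = cq^{d/(d+1)}(\log q)^2$; since $\mathbb{E}[\mathbf{X}_m(p)]$ is nondecreasing in $p$, the bound at $p = q^{-1/(d+1)}(\log q)^2$ already shows $\mathbb{E}[\mathbf{X}_m(p)] \to 0$, and Markov finishes the argument.

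I do not expect a serious obstacle here — this proposition is a direct corollary of Theorem~\ref{thm:container2} together with a first-moment/union-bound computation, following the template of Proposition~\ref{randomturan1_upper} verbatim. The only point requiring a little care is checking that the chosen $m$ and $k$ satisfy the hypotheses of Theorem~\ref{thm:container2} (namely $k \ge 1 + cq^{-1/d}$, which is immediate for $k = 2$) and that the algebra in the exponent genuinely yields a negative leading term; one should also verify $m$ is a valid integer size (at most $|\mathbf{S}_p|$, which holds a.a.s. by Chernoff when relevant). One might alternatively optimize $k$ slightly, but since the target bounds in the proposition already carry $(\log q)^2$ and $q^{d/(d+1)}$ factors, the crude choice $k = 2$ suffices and keeps the computation clean.
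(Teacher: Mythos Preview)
Your proposal is correct and follows essentially the same argument as the paper's own proof: apply Theorem~\ref{thm:container2} with $k=2$, set $m=cpq$, carry out the first-moment computation, and then use monotonicity of $\mathbb{E}[\mathbf{X}_m(p)]$ in $p$ for the small-$p$ range. One tiny arithmetic point: with $c \ge 4e$ you only get $\log(2e/c) \le -1$, which makes the exponent $\le cq^{d/(d+1)}(\log q)^2 - m \le 0$ at the boundary $p = q^{-1/(d+1)}(\log q)^2$ rather than tending to $-\infty$; taking $c \ge 8e$ (as the paper does) gives $\log(2e/c) \le -2$ and hence exponent $\le m - 2m = -m \to -\infty$, which is what you actually need.
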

\begin{proof}
Let $\mathbf{S}_p$ be a $p$-random set of $\mathbb{F}_{q}^d$ and $\mathbf{X}_m(p)$ denote the number of general position sets of size $m$ in $\mathbf{S}_p$. Also let $c >0$ be the constant guaranteed by Theorem~\ref{thm:container2}. By enlarging $c$ if necessary, we can assume $c \ge 8e$.

First we consider the range $p\ge q^{-\frac{1}{d+1}}(\log q)^2$. We set $m = cpq$ and let $\mathcal{C} = \mathcal{C}(q,k)$ be the container collection outputted by Theorem~\ref{thm:container2} with $k=2$. By the properties of $\mathcal{C}$ guaranteed by Theorem~\ref{thm:container2}, we can estimate
\begin{equation*}
     \mathbb{E}[\textbf{X}_m]
    \le \sum_{C\in \mathcal{C}}\binom{|C|}{m}p^m
     \le |\mathcal{C}|\binom{2q}{m}p^m  
    \le \exp\l(cq^{\frac{d}{d+1}}(\log q)^2+m\cdot\log\l(\frac{2epq}{m}\r)\r)\leq \exp(cpq).
\end{equation*} Same as in the proof of Proposition~\ref{randomturan1_upper}, we can use Markov's inequality to argue that a.a.s.\begin{equation*}
    \alpha(\mathbb{F}^d_q,p) \leq m = cpq.
\end{equation*} Also similar to the proof of Proposition~\ref{randomturan1_upper}, our upper bound for the range $p\le q^{-\frac{1}{d+1}}(\log q)^2$ follows from the ``nondecreasingness'' of $\mathbb{E}[\mathbf{X}_m(p)]$ as a function of $p$.
\end{proof}

Finally, we present the proof of Theorem~\ref{thm:randomturan2}. Due to its similarity to the proof of Theorem~\ref{thm:randomturan1}, we only give a brief summary of our arguments.
\begin{proof}[Proof Sketch of Theorem~\ref{thm:randomturan2}]
    Let $\mathbf{S}_p$ be a $p$-random set of $\mathbb{F}_q^d$. First we consider the upper bounds. During the range $q^{-d + o(1)}\leq p \le q^{-d + \frac{1}{d} + o(1)}$, it follows from $\alpha(\mathbb{F}_q^d,p)\leq |\mathbf{S}_p|$ and Chernoff's bound. During the range $q^{-d + \frac{1}{d} + o(1)}\le p\le q^{-d+\frac{2d}{d+1}+o(1)}$, we just apply Proposition~\ref{randomturan2_upper1}. And during the range $q^{-d+\frac{2d}{d+1}+o(1)} \le p\leq 1$, we can apply Proposition~\ref{randomturan2_upper2} to obtain the claimed upper bound.

    Next we consider the lower bounds. During the range $q^{-d + o(1)}\leq p \le q^{-d + \frac{1}{d}+o(1)}$, we can check that the expected number of coplanar $(d+1)$-tuples in $\mathbf{S}_p$ is much smaller than the expected number of $|\mathbf{S}_p|$, and our lower bound follows from deleting a point from each coplanar $(d+1)$-tuple. During the range $q^{-d + \frac{1}{d}+o(1)} \le p \le q^{-\frac{1}{d}+o(1)}$, it follows from the ``nondecreasingness'' of $\alpha(\mathbb{F}^d_q,p)$ as a function of $p$. And during the range $q^{-\frac{1}{d}+o(1)} \le p\leq 1$, the lower bound comes from the intersection of $p$-random set and the moment curve $\textbf{Y} = \textbf{S}_p \cap \left\{(x, \ldots, x^{d}) :~ x\in \mathbb{F}_q\right\}$.
\end{proof}

\section{Concluding Remarks}\label{sec:remark}
\begin{itemize}
    \item The problem of determining $\alpha(\mathbb{F}^d_q,p)$ remains open for $d \geq 3$ in the range $q^{-d+2/d}\le p\le q^{-1/(d+1)}$. We believe that the lower bounds are closer to the truth. One obstacle in generalizing our proof for $d=2$ is the lack of optimal supersaturation when using the expander mixing lemma. The reason is that the intersection of hyperplanes could be as large as $q^{d-2}$, resulting in a large number of overcounting of coplanar $(d+1)$-tuples. This problem might be overcome with some extra ideas. As the first step towards resolving this problem, we suggest the following question for $d=3$.
    \begin{problem}
    Is it true that for any point set $P$ in $\mathbb{F}^3_q$ with size $1000q$, there is a collection $\mathcal{S}$ of coplanar $4$-tuples in $P$ such that
      \begin{equation*}
          |\mathcal{S}| =  \Omega\l(q^3\r),~\Delta_1(\mathcal{S}) = O\l(q^2\r),~\Delta_2(\mathcal{S})= O\l(q^{4/3}\r),~\text{and}~\Delta_3(\mathcal{S})= O\l(q^{2/3}\r).
      \end{equation*}
    \end{problem}

    \item More generally, we say a point set $S\subset \mathbb{F}_q^d$ is \textit{$(k,c)$-evasive} if every $k$-flat of $\mathbb{F}_q^d$ contains at most $c$ points in $S$. Notice that a general position set in $\mathbb{F}_q^d$ is simply a $(d-1,d)$-evasive set. Extremal problems for evasive sets are extensively studied (see e.g. \cite{dvir2012subspace, ben2014note, sudakov2022evasive, SZ}), but the asymptotic behaviour of many related extremal quantities are still unknown. It seems plausible to extend results in this paper to $(k,c)$-evasive sets in $\mathbb{F}_q^d$ for certain values of $d$ and $(k,c)$, and we hope to return to this topic in future work.
    
    \item  It seems possible that the assumption $|P| \ge 4d q^{d-1}$ in Lemma~\ref{critical_supersaturation_Fqd} can be relaxed to $|P| \gg q^{d-2}$, which (if true) would be tight up to some constant factor, since a $(d-2)$-flat has size $q^{d-2}$ but contains no critical coplanar $(d+1)$-tuples. As weak evidence for such a relaxation, we prove that every point set $P$ in $\mathbb{F}_q^d$ with $|P| \gg q^{d-2}$ contains at least one critical coplanar $(d+1)$-tuple. We also have the following theorem confirming our speculation for $d=3$. See our proofs in the Appendix.
    \begin{theorem}\label{critical_supersaturation_Fq3}
        There exists an absolute constant $c>0$ such that if $P$ is a point set of size at least $cq$ in $\mathbb{F}_q^3$, then $P$ contains at least $\Omega\left({|P|^4}/{q}\right)$ critical coplanar $4$-tuples.
    \end{theorem}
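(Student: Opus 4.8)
The plan is to reduce the problem to a two–dimensional count and then analyse how $P$ meets the planes of $\mathbb{F}_q^3$. Write $n=|P|$ and, for a line $\ell$, set $m_\ell:=|P\cap\ell|\le q$. Every critical coplanar $4$-tuple spans a unique plane (2-flat), so
\[
\#(\text{critical coplanar }4\text{-tuples in }P)=\sum_{\pi}\iota(\pi),
\]
the sum over all planes $\pi$, where $\iota(\pi)$ is the number of $4$-subsets of $P\cap\pi$ with no three collinear. The basic two–dimensional estimate is: if $Q\subseteq\mathbb{F}_q^2$ has $|Q|=x$ and every line meets $Q$ in at most $t$ points, then the number of $4$-subsets of $Q$ with three collinear points is at most $x\sum_{\ell}\binom{|Q\cap\ell|}{3}\le x\cdot\frac t3\binom{x}{2}$, whence $\iota(\pi)\ge\binom{x}{4}-\frac{tx^3}{6}\ge\frac12\binom{x}{4}$ as soon as $x\ge Ct$ for an absolute constant $C$; in particular $\iota(\pi)=\binom{x}{4}$ when $Q$ is an arc. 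By Lemma~\ref{flat_counting} there are $(1+o(1))q^3$ planes and $\sum_\pi|P\cap\pi|=(1+o(1))nq^2$, so by convexity of $\binom{\cdot}{4}$ one has $\sum_\pi\binom{|P\cap\pi|}{4}\ge(1-o(1))q^3\binom{n/q}{4}=\Omega(n^4/q)$ provided $n/q$ is large; equivalently, the number $C_4$ of coplanar $4$-tuples is $\Omega(n^4/q)$ by Lemma~\ref{supersaturation_Fqd}. The whole issue is therefore to control the "bad'' $4$-tuples (those containing three collinear points, whose count is exactly $C_4$ minus the desired quantity).

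I would then split into cases by how concentrated $P$ is. If $n\ge 4dq^{d-1}=12q^2$, Lemma~\ref{critical_supersaturation_Fqd} with $d=3$ already gives the conclusion, so assume $cq\le n<12q^2$. If some plane $\pi_0$ has $|P\cap\pi_0|\ge n/2$, then $P\cap\pi_0$ is a set of at least $cq/2\ge Cq$ points of $\mathbb{F}_q^2$, so the two–dimensional estimate (with $t=q$) gives $\iota(\pi_0)\ge\frac12\binom{|P\cap\pi_0|}{4}=\Omega(n^4)$, far more than needed. Otherwise let $T=\sum_\ell\binom{m_\ell}{3}$ be the number of collinear triples. If $T$ is small, say $T\le\varepsilon n^3/q$ for a suitably small constant $\varepsilon$, then the number of bad $4$-tuples equals $\sum_\ell\bigl(\binom{m_\ell}{4}+\binom{m_\ell}{3}(n-m_\ell)\bigr)\le\bigl(n+\tfrac q4\bigr)T\le 2nT\le 2\varepsilon n^4/q$, which is at most $\tfrac12 C_4$ once $\varepsilon$ is chosen below the constant of Lemma~\ref{supersaturation_Fqd}; hence $\#(\text{critical})=C_4-\#(\text{bad})\ge\tfrac12 C_4=\Omega(n^4/q)$.

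The remaining, and hardest, case is $cq\le n=O(q^2)$ with no plane carrying half of $P$ and with $T$ large. Here the naive subtraction $C_4-\#(\text{bad})$ is useless — the two quantities can be of the same order with a genuine cancellation, as already the union of $n/q$ pairwise skew full lines shows — so one must return to the plane decomposition and prove that most planes are \emph{good}, meaning no line meets them in more than a $1/C$ fraction of their points; for such planes the two–dimensional estimate gives $\iota(\pi)\ge\frac12\binom{|P\cap\pi|}{4}$, and one wants the good planes to carry a constant fraction of $\sum_\pi\binom{|P\cap\pi|}{4}$. I expect this bookkeeping to be the main obstacle. A single "relatively heavy'' line spoils all $\approx q$ planes through it, and crudely bounding the number of spoiled planes by $(\#\text{heavy lines})\cdot(q+1)$ is only affordable when $n$ is at least of order $q^2$. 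To close the gap one needs a finer accounting of the heavy-line/plane incidences — exploiting that for a line $\ell$ with $m_\ell\ll n/q$ almost all of the $q+1$ planes through $\ell$ are far from heavy, since $\sum_{\pi\supseteq\ell}|P\cap\pi|=m_\ell q+n$, while a line through which many planes are heavy forces the rest of $P$ to collapse into few planes, returning us to the near-planar case handled above. Carrying this out (possibly by a further subdivision of the range $cq\le n=O(q^2)$, or by invoking the pseudorandomness of the point–plane incidence graph of $\mathbb{F}_q^3$ in the style of Lemma~\ref{middle_lines}) is the crux of the argument.
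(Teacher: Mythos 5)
Your plan is incomplete precisely in the regime you yourself identify as the crux: $cq\le|P|=O(q^2)$, with no single plane carrying a constant fraction of $P$ and with many collinear triples. In that regime the subtraction $C_4-\#(\text{bad})$ genuinely can cancel to leading order, and the fix you sketch — classify planes as ``good'' if no line meets them in more than a $1/C$-fraction and restrict the convexity bound to good planes — runs into exactly the circularity you describe: a single heavy line can spoil all $\approx q$ planes through it, and cruder bounds on the number of spoiled planes are only affordable when $|P|\gtrsim q^2$. You name the obstacle but do not overcome it, so the argument is not complete.

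The paper's proof avoids computing $\iota(\pi)$ plane by plane. It fixes a pair $\{u,v\}\subset P$, considers the $q+1$ planes through $\ell_{uv}$, discards the light ones, and labels each surviving plane $\pi_i$ by whether some line through $u$ (respectively $v$) captures at least a $1/3$-fraction of $(P\cap\pi_i)\setminus\ell_{uv}$, or neither (label $*$). By pigeonhole one label class carries $\Omega(|P|)$ points. If that label is $*$ the pair is ``good'': in each such plane one picks $w$, then picks $z$ off the lines $\overline{uw}$ and $\overline{vw}$, giving $\Omega(x_i^2)$ critical coplanar $4$-tuples per plane and, by Jensen, $\Omega(|P|^2/q)$ through $\{u,v\}$. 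The decisive move is in the other case, where the dominant label is (say) $u$: the paper does not try to salvage any single plane $\pi_i$, but instead takes two distinct heavy lines $\ell_i\subset\pi_i$ and $\ell_j\subset\pi_j$ through the pivot $u$ and forms $\{w_1,w_2,w_3,w_4\}$ with $w_1,w_2\in\ell_i\setminus\{u\}$ and $w_3,w_4\in\ell_j\setminus\{u\}$; these four points are automatically coplanar with no three collinear, hence a critical coplanar $4$-tuple. Summing $\binom{x_i/3}{2}\binom{x_j/3}{2}$ over $i<j$ and using that each heavy line has at most $q$ points (so $x_i\le 3q$) together with a Schur-convexity argument gives $\Omega(|P|^3/q)$ such tuples per bad pair. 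Accounting that a $4$-tuple is produced $O(1)$ times from good pairs and at most $O(|P|)$ times from bad pairs yields $\Omega(|P|^4/q)$ either way. The missing idea in your proposal is exactly this: rather than restricting attention to planes with no heavy line, exploit the heavy lines directly, by pairing two points off one heavy line with two points off a second heavy line concurrent with it at the pivot.
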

    \end{itemize}

\section*{Acknowledgement}
Ji Zeng wishes to thank Jonathan Tidor for stimulating discussions at 2023 UCSD Workshop on Ramsey Theory. This work was initiated while Ji Zeng was visiting SCMS at the kind invitation of Hehui Wu.

\bibliographystyle{abbrv}
\bibliography{main}

\appendix

\section{Extremal results for critical coplanar tuples}

We first prove a Tur{\'a}n-type result for critical coplanar $(d+1)$-tuples in $\mathbb{F}_q^d$.
\begin{theorem}
    For every integer $d\ge3$ there exists a constant $c>0$ such that if $P$ is a point set of size at least $cq^{d-2}$ in $\mathbb{F}^d_q$, then there exists a critical coplanar $(d+1)$-tuple contained in $P$.
\end{theorem}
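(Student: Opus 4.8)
The plan is to translate the statement into a question about hyperplane sections and then run an induction on $d$, with the base case $d=3$ handled by a direct counting argument.

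The first step is a reformulation: a $(d+1)$-tuple is a critical coplanar $(d+1)$-tuple precisely when it spans a hyperplane $H$ and is in general position inside $H\cong\mathbb{F}_q^{d-1}$ (every $d$ of its points are affinely independent, hence span $H$). Thus $P$ has no critical coplanar $(d+1)$-tuple if and only if for every hyperplane $H$ the set $P\cap H$ contains no $d+1$ points in general position inside $H$. The second step is an elementary structural lemma: if $S\subseteq\mathbb{F}_q^m$ contains no $m+2$ points in general position, then either $S$ lies in an $(m-1)$-flat or $S$ is covered by $m+1$ hyperplanes of $\mathbb{F}_q^m$; in particular $|S|\le(m+1)q^{m-1}$. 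The proof is the obvious one: pick $m+1$ affinely independent points of $S$ spanning the ambient space, and observe that any further point of $S$ must lie on one of the $m+1$ facets of the resulting simplex. Applying this with $m=d-1$ to every hyperplane $H$ of $\mathbb{F}_q^d$ shows $|P\cap H|\le dq^{d-2}$, and more usefully that for any $d$ affinely independent points $v_1,\dots,v_d\in P$ spanning a hyperplane $H$, the set $P\cap H$ is covered by the $d$ facet $(d-2)$-flats $F_i=\mathrm{aff}(\{v_j:j\ne i\})$.

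With these tools I would induct on $d$. If $P$ lies in a hyperplane, the structural lemma applied inside that hyperplane already gives $|P|\le dq^{d-2}$, so assume $P$ affinely spans $\mathbb{F}_q^d$ and fix $d-1$ affinely independent points of $P$ spanning a $(d-2)$-flat $F$. Iterating the covering statement over the $q+1$ hyperplanes through $F$ shows $P$ is contained in $F$ together with a controlled family of $(d-2)$-flats, all of which are forced to contain one of the fixed $(d-3)$-flats obtained from the chosen points. The crude form of this argument loses a factor of $q$ and only yields $|P|=O_d(q^{d-1})$, which merely recovers Lemma~\ref{critical_supersaturation_Fqd}; the point is to remove this factor by combining the shared-flat structure with the inductive hypothesis applied inside the $(d-2)$-flats, so that the $q+1$ hyperplanes through $F$ do not each contribute a full $q^{d-2}$ points. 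This sharpening is the main obstacle, and it is exactly where the hypothesis $|P|\gg q^{d-2}$ (rather than $q^{d-1}$) is essential.

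For the base case $d=3$ one must show that a point set in $\mathbb{F}_q^3$ with no four coplanar points in general position has size $O(q)$. Here I would double count over planes: the $m=2$ instance of the structural lemma says that any plane meeting $P$ in more than a bounded number of points meets it in the union of two lines, hence in at most $2q$ points. If some point of $P$ lies on no such ``rich'' plane, then every plane through it meets $P$ in boundedly many points, and double counting pairs through that point forces $|P|=O(q)$; otherwise one controls the rich planes through the lines they are forced to contain, again bounding $|P|$ by $O(q)$. The higher-dimensional induction then propagates this bound upward.
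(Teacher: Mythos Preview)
Your reformulation and the structural lemma are correct, but the proposal is not a proof: you yourself flag the decisive step as unresolved. After covering each hyperplane section $P\cap H$ by $d$ facet $(d-2)$-flats, you only obtain $|P|=O_d(q^{d-1})$, and the sentence ``the point is to remove this factor by combining the shared-flat structure with the inductive hypothesis applied inside the $(d-2)$-flats'' is not a mechanism. The inductive statement in dimension $d-2$ concerns critical coplanar $(d-1)$-tuples inside a $(d-2)$-flat; their presence or absence in $P\cap F$ has no evident bearing on critical coplanar $(d+1)$-tuples in $\mathbb{F}_q^d$, nor does it force $|P\cap F|$ to be small. Note too that the example $P=$ a $(d-2)$-flat shows a single hyperplane can carry $q^{d-2}$ points of $P$, so the saving must come from overlap between hyperplanes, not from a per-hyperplane bound; you have not indicated how to quantify that overlap. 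The base case is likewise only sketched: the $m=2$ structural lemma gives three lines, not two, and the ``otherwise'' branch (every point on a rich plane) is left as a phrase.

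The paper's argument is different in kind: it is constructive rather than a covering bound. One pigeonholes to find a hyperplane $F_{d-1}$ with $\ge cq^{d-3}$ points of $P$, applies the inductive hypothesis \emph{inside} $F_{d-1}\cong\mathbb{F}_q^{d-1}$ to produce a critical coplanar $d$-tuple $T$ spanning a $(d-2)$-flat $F_{d-2}$, and then pigeonholes again over the $q+1$ hyperplanes through $F_{d-2}$ to find one, $F'_{d-1}$, with $\ge\frac{c}{2}q^{d-3}$ points of $P$ outside $F_{d-2}$. Fixing one such point $u$, a short claim shows that any ``bad'' partner $v$ (one for which no $(d-1)$-subset of $T$ together with $\{u,v\}$ is critical) must lie on a $(d-3)$-flat determined by two $(d-2)$-subsets of $T$ and $u$; there are at most $\binom{d}{d-2}^2$ such flats, hence at most $\binom{d}{d-2}^2 q^{d-3}$ bad vertices. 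Taking $c$ large yields a good $v$. For $d=3$ this degenerates to: find a line $F_1$ with three points of $P$, find a plane through $F_1$ with two further points $u,v\in P\setminus F_1$, and observe that $\{u,v\}$ together with two points of $F_1\cap P$ off the line $\overline{uv}$ is critical. The induction is thus applied once, in a hyperplane, to manufacture a nearly-critical seed $T$; the remaining work is a short bad-locus count, not a global covering estimate.
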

\begin{proof}
    We prove it by induction on $d$. First we consider the $d=3$ case. Since $\mathbb{F}_q^3$ can be covered by $q$ parallel hyperplanes, there exists one hyperplane $F_2$ such that $|F_2 \cap P| \geq 4$ (assuming $c$ is sufficiently large). If these $4$ points do not form a critical coplanar $4$-tuple, there exists a line $F_1 \subset F_2$ containing at least $3$ vertices of $P$. Notice that there are $q+1$ hyperplanes containing $F_1$. Hence by the pigeonhole principle, there is a hyperplane $F_2'$ containing $F_1$ and \begin{equation*}
        |F_2' \cap (P\setminus F_1)| \geq \frac{|P\setminus F_1|}{q+1} \geq \frac{cq-q}{q+1} \geq 2.
    \end{equation*} Hence, let $u,v$ be two points in $F_2' \cap (P\setminus F_1)$ and $\ell_{uv}$ be the line determined by them. By the fact that $|F_1 \cap P|\ge 3$, we have $|(F_1 \cap P)\setminus \ell_{uv}| \ge 2$. Then let $w,z$ be two points in $(F_1 \cap P)\setminus \ell_{uv}$, and notice that $\{u,v,w,z\}$ is a critical coplanar $4$-tuple (there are no collinear triples among them).

    Now we assume $d\geq 4$. Since $\mathbb{F}_q^d$ can be covered by $q$ parallel hyperplanes, there exists one hyperplane $F_{d-1}$ such that $|F_{d-1} \cap P| \geq cq^{d-3}$. By the inductive hypothesis, there exists a $d$-tuple $T$ that is critical coplanar inside the $(d-1)$-dimensional affine space $F_{d-1}$. As a consequence, $T$ spans a $(d-2)$-flat $F_{d-2} \subset F_{d-1}$. Notice that there are $q+1$ hyperplanes in $\mathbb{F}_q^d$ containing $F_{d-2}$. So there is a hyperplane $F_{d-1}'$ containing $F_{d-2}$ and \begin{equation*}
        |F_{d-1}' \cap (P \setminus F_{d-2})| \geq \frac{|P \setminus F_{d-2}|}{q + 1} \geq \frac{cq^{d-2}-q^{d-2}}{q+1} \geq \frac{c}{2} q^{d-3}.
    \end{equation*}
    
    Write $S = F_{d-1}' \cap (P \setminus F_{d-2})$ and fix a point $u\in S$. For each point $v \in S\setminus \{u\}$, if the union of either $(d-1)$-tuple in $T$ and $\{u,v\}$ is not a critical coplanar $(d+1)$-tuple in $\mathbb{F}_q^d$, then we say that $v$ is a \textit{bad vertex}. It suffices for us to show that not all of $S\setminus \{u\}$ are bad vertices. To do this, we need the following claim.
    \begin{claim}\label{claim:critical_turan}
        Suppose $v \in S\setminus \{u\}$ is a bad vertex, then any $(d-1)$-tuple in $T$ contains a $(d-2)$-tuple $\tau$ such that $\tau \cup\{u,v\}$ lies on a $(d-2)$-flat.
     \end{claim}
     \begin{proof}
         Fix a $(d-1)$-tuple $T' \subset T$. By definition, $T'\cup\{u,v\}$ is not a critical coplanar $(d+1)$-tuple. So $T'\cup\{u,v\}$ contains a $d$-tuple $\tau'$ that is affinely dependent. Since $T$ is critical coplanar inside the $(d-1)$-dimensional affine space $F_{d-1}$, the affine span of $T'$ coincides with the affine span of $T$, which is $F_{d-2}$. Meanwhile, $F_{d-2}$ does not contain $u$ or $v$. So the affine span of $T' \cup \{u\}$ or $T' \cup \{v\}$ has dimension $d-1$. As a consequence, $\tau'$ cannot be either $T' \cup \{u\}$ or $T' \cup \{v\}$. Hence, there exists a $(d-2)$-tuple $\tau$ in $T'$ such that $\tau'=\tau\cup\{u,v\}$, and we know that $\tau'$ lies on a $(d-2)$-flat.
     \end{proof}
     
     Now let $v$ be a bad vertex and $\ell_{uv}$ be the line determined by $u,v$. By Claim~\ref{claim:critical_turan}, there exists a $(d-2)$-tuple $\tau$ such that $\tau \cup \{u,v\}$ lies on a $(d-2)$-flat $F'_{d-2}$. Next, we choose a $(d-1)$-tuple $T' \subset T$ such that $\tau \not \subset T'$. By Claim~\ref{claim:critical_turan}, there exists a $(d-2)$-tuple $\tau' \subset T'$ such that $\tau'\cup \{u,v\}$ lies on a $(d-2)$-flat $F''_{d-2}$. We claim that $F'_{d-2} \neq F''_{d-2}$. Indeed, since $|\tau \cup \tau'|\ge d-1$ and $T$ is critical planar in $F_{d-1}$, the span of $\tau \cup \tau'$ coincides with the span of $T$, which is $F_{d-2}$. If $F'_{d-2} = F''_{d-2}$, then we can conclude $F_{d-2} = F'_{d-2}$, which contradicts to the fact $u,v \not\in F_{d-2}$. Thus $v$ lies on a $(d-3)$-flat $F'_{d-2} \cap F''_{d-2}$ and this $(d-3)$-flat only depends on $\tau, \tau' \subset T$. Since there are $\binom{d}{d-2}$ choices for a $(d-2)$-tuple in $T$, it follows that there are at most $\binom{d}{d-2}^2 q^{d-3}$ possible locations for a bad vertex $v$. Recall that $|S| \ge \frac{c}{2}q^{d-3}$. Hence, we can find $v\in S\setminus \{u\}$ which is not a bad vertex given that $c$ is sufficiently large.
\end{proof}

Next we prove Theorem~\ref{critical_supersaturation_Fq3}, the ``asymptotically best possible'' supersaturation result for critical coplanar $4$-tuples in $\mathbb{F}_q^3$. We restate the theorem here.
\begin{theorem}
    There exists an absolute constant $c>0$ such that if $P$ is a point set of size at least $cq$ in $\mathbb{F}_q^3$, then $P$ contains at least $\Omega\left({|P|^4}/{q}\right)$ critical coplanar $4$-tuples.
\end{theorem}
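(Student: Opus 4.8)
The plan is to argue by a case analysis on $M:=\max_{\ell}|P\cap\ell|$, the maximum number of collinear points of $P$, where $\ell$ ranges over lines of $\mathbb{F}_q^3$; recall that $M\le q$ automatically. Fix a small absolute constant $\varepsilon>0$ and a large absolute constant $C$, and take $c$ large (at least $C$). The three regimes are $M\ge\varepsilon|P|$ (``concentrated''), $M\le|P|/(Cq)$ (``spread''), and $|P|/(Cq)<M<\varepsilon|P|$ (``intermediate''). In the concentrated regime, I would take a line $\ell_0$ with $m_0:=|P\cap\ell_0|=M$ and build critical coplanar $4$-tuples of the shape $\{u_1,u_2,w_1,w_2\}$ with $u_1,u_2\in P\cap\ell_0$ and $w_1,w_2\in(P\cap\pi)\setminus\ell_0$ for a plane $\pi\supset\ell_0$. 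Such a $4$-tuple lies in $\pi$, hence is coplanar, and it is critical unless some $u_i$ equals $\ell_{w_1w_2}\cap\ell_0$; since at most one point of $\ell_0$ is forbidden, this yields at least $\binom{m_0-1}{2}\sum_{\pi\supset\ell_0}\binom{a_\pi}{2}$ critical tuples, where $a_\pi:=|(P\cap\pi)\setminus\ell_0|$ and $\sum_{\pi\supset\ell_0}a_\pi=|P|-m_0$. As $\mathbb{F}_q^3$ has exactly $q+1$ planes through $\ell_0$ (Lemma~\ref{flat_counting}), Jensen's inequality gives $\sum_{\pi\supset\ell_0}\binom{a_\pi}{2}=\Omega((|P|-m_0)^2/q)=\Omega(|P|^2/q)$, using $m_0\le q\le|P|/2$; together with $\binom{m_0-1}{2}=\Omega(\varepsilon^2|P|^2)$ this gives $\Omega(|P|^4/q)$ critical tuples.

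For the spread regime I would use the double count in which $6\cdot(\#\text{critical tuples})$ equals the number of configurations $(c,d;\ell,\{a,b\})$ with $c\neq d\in P$, with $\ell\neq\ell_{cd}$ a line coplanar with $\ell_{cd}$, with $\{a,b\}\subset P\cap\ell$, and with $\{a,b,c,d\}$ a critical coplanar $4$-tuple. Restricting $\ell$ to the lines meeting $\ell_{cd}$ and swapping the order of summation (the inner count becomes $\sum_{p\in\ell_{cd}}S_p$ minus a correction involving $\ell_{cd}$ only, where $S_p:=\sum_{g\ni p}\binom{|P\cap g\setminus\{p\}|}{2}$ counts the point-pairs of $P$ collinear with $p$), one arrives at $\#\text{critical tuples}\ \ge\ \tfrac13\big(\sum_{p}S_p^2-q\sum_{\ell}\binom{|P\cap\ell|}{2}^{2}\big)$. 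Here $\sum_pS_p=\binom{|P|}{2}(q-2)$, so Cauchy--Schwarz over the $q^3$ points gives $\sum_pS_p^2\ge(\sum_pS_p)^2/q^3=\Theta(|P|^4/q)$, while $q\sum_{\ell}\binom{|P\cap\ell|}{2}^{2}\le q\binom{M}{2}\binom{|P|}{2}=O(qM^2|P|^2)$, which stays below half the main term once $M\le|P|/(Cq)$ with $C$ large. (This step can alternatively be run plane by plane: a plane with $n_\pi$ points has at most $\tfrac16Mn_\pi^3$ quadruples with three collinear out of $\binom{n_\pi}{4}$ total, so every plane with $n_\pi$ above a constant multiple of $M$ contributes $\Omega(n_\pi^4)$, and a power-mean estimate over the $\Theta(q^3)$ planes --- whose sizes are moreover controlled by Lemma~\ref{middle_lines} --- again gives $\Omega(|P|^4/q)$.)

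The remaining intermediate regime is the crux, and I expect it to be the main obstacle; note it forces $|P|=O(q^2)$, so here $P$ is ``small''. A line $\ell_0$ with $m_0=M$ points is now too poor for the concentrated-regime bound, yet it alone contributes $\Theta(m_0^3q)$ degenerate coplanar quadruples, which the spread-regime subtraction cannot absorb. The hard part is to localize to the pencil of $q+1$ planes through $\ell_0$: one applies the spread-regime estimate inside those planes whose off-$\ell_0$ part is large compared with $m_0$, uses the ``$2$-on-$\ell_0$'' construction in the remaining planes, and iterates over whichever rich lines persist, tracking how $|P|$ and $M$ evolve; establishing that the iteration terminates after boundedly many rounds and that the rich-line corrections are always dominated is the delicate technical core. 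The concentrated and spread regimes are, by contrast, comparatively routine, and each already matches the truth up to constants on the relevant extremal configurations (a union of a few lines, respectively a ``generic'' union of a few curves).
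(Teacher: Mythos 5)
Your proposal handles two of your three regimes convincingly, but the intermediate regime is left as a sketch, and you acknowledge this yourself; as written, this is a genuine gap, not merely a routine detail to fill in. Let me be concrete about where each piece stands.

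The concentrated regime ($M\ge \varepsilon|P|$, so $|P|\le q/\varepsilon$) is correct: two points of $P$ on the rich line $\ell_0$ and two points of $P$ in the same plane $\pi\supset\ell_0$ but off $\ell_0$ always give a coplanar quadruple, and criticality only fails for the single point $\ell_{w_1w_2}\cap\ell_0$, so the count $\binom{m_0-1}{2}\sum_{\pi}\binom{a_\pi}{2}$ together with Jensen over the $q+1$ planes is clean. The spread regime is also essentially correct: once $\ell_{ab}\neq\ell_{cd}$ and both pass through $p\ne a,b,c,d$, the quadruple is automatically critical coplanar (the only failure mode is $\ell_{ab}=\ell_{cd}$), so the error term really is controlled by $q\sum_{\ell}\binom{|P\cap\ell|}{2}^{2}\le qM^{2}|P|^{2}$, and Cauchy--Schwarz over the $q^{3}$ points gives the main term $\Theta(|P|^{4}/q)$. (Your factor of $6$ should be a $4$, since only the opposite-side pairs that actually meet in the affine plane are counted, but that is a harmless constant.) These two regimes are indeed ``routine'' as you say.

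The gap is the intermediate regime $|P|/(Cq)<M<\varepsilon|P|$. Here the concentrated construction from a single rich line gives only $\binom{m_0}{2}\cdot\Omega(|P|^{2}/q)=\Omega(|P|^{4}/q^{3})$, off by a factor of $q^{2}$, while the spread subtraction $qM^{2}|P|^{2}$ can exceed the main term $|P|^{4}/q$ by a factor up to $\varepsilon^{2}q^{2}$. So neither estimate is salvageable directly, and what you propose --- iterating over rich lines, running the spread argument plane-by-plane in the pencil, and tracking how $|P|$ and $M$ evolve --- is a plan rather than a proof. It is not clear that the iteration is well-founded, that a bounded number of rounds suffices, or that the cumulative corrections stay dominated; and since you flag this yourself as ``the delicate technical core,'' I cannot credit it as established.

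For comparison, the paper's proof avoids any global case split on $M$ and instead works locally. It fixes an \emph{arbitrary} pair $\{u,v\}\subset P$ and considers the pencil of $q+1$ planes through $\ell_{uv}$, discards the sparse planes, and then labels each surviving plane by $u$, by $v$, or by $*$ according to whether a single line through $u$ (resp.\ through $v$) captures a $1/3$-fraction of $(\pi\setminus\ell_{uv})\cap P$. If the dominant label is $*$, one directly produces $\Omega(|P|^{2}/q)$ critical quadruples \emph{containing} $\{u,v\}$ (a ``good pair''). If the dominant label is $u$ (say), one instead builds critical quadruples whose two diagonals meet at the pivot $u$, getting $\sum_{i<j}x_i^{2}x_j^{2}$ of them, and a Schur-convexity argument (crucially exploiting that each $x_i\le 3q$ because a $1/3$-fraction of a plane's points lies on a single line) lower-bounds this by $\Omega(|P|^{3}/q)$. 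Summing over all $\Omega(|P|^{2})$ pairs, accounting for multiplicities ($\binom{4}{2}$ for good, $3|P|$ for bad via the pivot), gives $\Omega(|P|^{4}/q)$ in both branches. This labeling-and-pivot device is precisely what replaces your intermediate regime; nothing in your sketch plays the role of the Schur-convexity step, which is where the $\Omega(|P|^{3}/q)$ per bad pair (rather than the weaker $\Omega(|P|^{2}/q)$) is extracted, and that extra factor of $|P|$ is exactly what compensates for the $|P|$-fold multiplicity of bad pairs sharing a pivot. To salvage your approach, you would need to supply an argument of comparable strength in the intermediate window; as it stands, the proposal is incomplete there.
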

\begin{proof}
Fix an arbitrary pair $\{u,v\}\subset P$ and let $\ell$ be the line determined by $u,v$. Notice that there are $q+1$ planes $\pi_1,\dots,\pi_{q+1}$ containing $\ell$. First, we discard all $\pi_i$ such that $\pi_i\setminus \ell$ contains less than $|P|/(10q)$ points from $P$. Notice that at least $|P|/10$ points are not in any discarded $\pi_i$ (here we assume $c$ is sufficiently large and use $|P| \ge cq$). Next, we label each remaining hyperplane $\pi_i$ as follows: if there exists a line $\ell_i$ such that $u \in \ell_i\subset \pi_i$ and at least a $1/3$-fraction of points from $(\pi_i\setminus \ell)\cap P$ lie in $\ell_i$, then we label $\pi_i$ with $u$; if there exists a line $\ell_i$ such that $v \in \ell_i\subset \pi_i$ and at least a $1/3$-fraction of points from $(\pi_i\setminus \ell)\cap P$ lie in $\ell_i$, then we label $\pi_i$ with $v$; otherwise we label $\pi_i$ with $*$. Then, by the pigeonhole principle, there are planes $\pi_1,\dots,\pi_{k}$ sharing the same label such that their union contains at least $|P|/30$ points from $P\setminus \ell$. Write $x_i=|(\pi_i\setminus \ell)\cap P|$ for $i \in [k]$, then we have\begin{equation*}
    \sum_{i=1}^{k} x_i \geq \frac{|P|}{30} \text{\quad and \quad} k \leq q+1.
\end{equation*} 

If the planes $\pi_1,\pi_2,\dots,\pi_{k}$ are all labeled by $*$, then there are $x_i^2/3$ critical coplanar $4$-tuples containing $\{u,v\}$ on each $\pi_i$. Indeed, we first choose a point $w \in \pi_i \setminus \ell$ out of $x_i$ choices. Then we discard all the points on the lines $\overline{uw}$ and $\overline{vw}$. Because the plane $\pi_i$ is labelled by $*$, there are at least $x_i/3$ points in $P\cap \pi_i$ remaining, so we can choose any $z$ from them and $\{u,v,w,z\}$ will be critical coplanar. Hence, we have at least $\sum_{i=1}^{k} x_i^2/3$ many critical coplanar $4$-tuples. By Jensen's inequality, there are at least $\Omega(|P|^{2}/q)$ critical coplanar $4$-tuples containing $\{u,v\}$. In this particular case, we say that $\{u,v\}$ is a \textit{good pair}.

If the planes $\pi_1,\pi_2,\dots,\pi_{k}$ are all labeled by $u$ (for example), we say that $u$ is the \textit{pivot} of the \textit{bad pair} $\{u,v\}$. Notice that for $1\leq i<j\leq k$, there are $\binom{x_i/3}{2}\binom{x_j/3}{2}$ many critical coplanar $4$-tuples $\{w_1,w_2,w_3,w_4\}$ such that the two lines $\overline{w_1w_2}$ and $\overline{w_3w_4}$ intersect at $u$. Indeed, we can just choose $w_1,w_2$ from $\ell_i\cap P$ and $w_3,w_4$ from $\ell_j\cap P$. In this way, the number of such $4$-tuples we produced is at least, up to a constant factor, \begin{equation}
    \sum_{1\leq i<j\leq k} x_i^2x_j^2.\label{eq:critical_supersaturation_Fq3}
\end{equation}

The following claim, whose proof we postpone, optimizes this quantity in terms of $|P|$.
\begin{claim}\label{claim:critical_supersaturation_Fq3}
    When $\{u,v\}$ is a bad pair, the quantity \eqref{eq:critical_supersaturation_Fq3} is at least $\Omega(|P|^{3}/q)$.
\end{claim}

We consider the above process for all pairs $u,v\in P$. There are $\Omega(|P|^2)$ pairs and each one is either good or bad. If there are $\Omega(|P|^2)$ many good pairs, we produce $\Omega((|P|^2) \cdot (|P|^{2}/q))$ critical coplanar $4$-tuples, where each distinct tuple is repeated at most $\binom{4}{2}$ times. Indeed, for each critical coplanar $4$-tuple, there are $\binom{4}{2}$ ways to determine $\{u,v\}$.

If there are $\Omega(|P|^2)$ many bad pairs, we produce $\Omega((|P|^2) \cdot (|P|^{3}/q))$ critical coplanar $4$-tuples, where each distinct tuple is repeated at most $3|P|$ times. Indeed, for each critical coplanar $4$-tuple, there are $3$ candidates for the pivot. Once the pivot is determined, there are at most $|P|$ choices for the bad pair $\{u,v\}$.

Overall, we have counted $\Omega(|P|^4/q)$ critical coplanar $4$-tuples in $P$. Hence it suffices for us to justify Claim~\ref{claim:critical_supersaturation_Fq3}. Without loss of generality, we assume $x_1\geq x_2\geq \dots \geq x_{k}$. We check that, fixing $x_1$, the quantity \eqref{eq:critical_supersaturation_Fq3}, considered as a function of $x_2,\dots,x_{k}$, is Schur-convex using the Schur-Ostrowski criterion (see \cite{schur1923uber,ostrowski1952quelques}).
    
For any $2\leq i\neq j \leq k$, we notice that\begin{equation*}
        f(x_2,x_3,\dots,x_{k}):= \eqref{eq:critical_supersaturation_Fq3} = x_i^2x_j^2 + (x_i^2+x_j^2)\left(\sum_{\substack{\iota \neq i,j\\ 1\leq \iota\leq k}} x_\iota^2 \right) + \text{other terms independent of $x_i$ and $x_j$}.
    \end{equation*} Then we can compute\begin{equation*}
        (x_i-x_j)\left( \frac{\partial f}{\partial x_i} - \frac{\partial f}{\partial x_j}\right) = 2(x_i-x_j)^2\left(\sum_{\substack{\iota \neq i,j\\ 1\leq \iota\leq k}} x_\iota^2 - x_ix_j\right) \geq 0,
    \end{equation*}where the last inequality follows from $x_1^2\geq x_ix_j$. Hence $f$ satisfies the Schur-Ostrowski criterion.

    Because we have the hypothesis that $\{u,v\}$ is a bad pair, we have a special condition $x_1/3 \leq q$ since at least a $1/3$-fraction of points from $(\pi_i\setminus \ell)\cap P$ is contained in a line. So we have $\sum_{i=2}^{k} x_i \geq |P|/100$ (here we use $|P| \ge cq$). Therefore, by Schur-convexity, the quantity \eqref{eq:critical_supersaturation_Fq3} is lower bounded by when $x_i$'s (with $2\leq i\leq k$) are roughly equal, which means\begin{equation*}
        \sum_{2\leq i<j\leq k} x_i^2x_j^2 \geq \Omega\l(\l(\frac{|P|}{k}\r)^4(k)^2\r) \geq \Omega(|P|^3/q).
    \end{equation*} Here we used $|P| \ge cq$ and $k \le q+1$ for the last inequality. This proves Claim~\ref{claim:critical_supersaturation_Fq3} and concludes our proof.
\end{proof}
\end{document}